\theoremstyle{definition}
\theoremstyle{remark}
\numberwithin{equation}{section}
\newtheorem{athm}{Theorem}
\theoremstyle{plain}
\newtheorem{theorem}{Theorem}[section]
\newtheorem{proposition}[theorem]{Proposition}
\newtheorem{corollary}[theorem]{Corollary}
\newtheorem{lemma}[theorem]{Lemma}
\theoremstyle{definition}
\newtheorem{example}{Example}
\newtheorem{definition}[theorem]{Definition}
\newtheorem{remark}[theorem]{Remark}
\DeclareMathOperator{\esssup}{ess \ sup}
\DeclareMathOperator{\diam}{diam}
\DeclareMathOperator{\Ker}{Ker}
\chardef\@x10\chardef\@xv60
\def\tcitime{
\def\@time{%
  \@minute\time\@hour\@minute\divide\@hour\@xv
  \ifnum\@hour<\@x 0\fi\the\@hour:%
  \multiply\@hour\@xv\advance\@minute-\@hour
  \ifnum\@minute<\@x 0\fi\the\@minute
  }}%
\def\x@hyperref#1#2#3{%
   \catcode`\~ = 12
   \catcode`\$ = 12
   \catcode`\_ = 12
   \catcode`\# = 12
   \catcode`\& = 12
   \y@hyperref{#1}{#2}{#3}%
}
\def\y@hyperref#1#2#3#4{%
   #2\ref{#4}#3
   \catcode`\~ = 13
   \catcode`\$ = 3
   \catcode`\_ = 8
   \catcode`\# = 6
   \catcode`\& = 4
}
\def\QCTOpt[#1]#2{%
  \def\QCTOptB{#1}
  \def\QCTOptA{#2}
}
\def\QCTNOpt#1{%
  \def\QCTOptA{#1}
  \let\QCTOptB\empty
}
\def\Qct{%
  \@ifnextchar[{%
    \QCTOpt}{\QCTNOpt}
}
\def\QCBOpt[#1]#2{%
  \def\QCBOptB{#1}%
  \def\QCBOptA{#2}%
}
\def\QCBNOpt#1{%
  \def\QCBOptA{#1}%
  \let\QCBOptB\empty
}
\def\Qcb{%
  \@ifnextchar[{%
    \QCBOpt}{\QCBNOpt}%
}
\def\PrepCapArgs{%
  \ifx\QCBOptA\empty
    \ifx\QCTOptA\empty
      {}%
    \else
      \ifx\QCTOptB\empty
        {\QCTOptA}%
      \else
        [\QCTOptB]{\QCTOptA}%
      \fi
    \fi
  \else
    \ifx\QCBOptA\empty
      {}%
    \else
      \ifx\QCBOptB\empty
        {\QCBOptA}%
      \else
        [\QCBOptB]{\QCBOptA}%
      \fi
    \fi
  \fi
}
\def\GRAPHICSPS#1{%
 \ifcase\GRAPHICSTYPE
   \special{ps: #1}%
 \or
   \special{language "PS", include "#1"}%
 \fi
}%
\def\graffile#1#2#3#4{%
    \bgroup
	   \@inlabelfalse
       \leavevmode
       \@ifundefined{bbl@deactivate}{\def~{\string~}}{\activesoff}%
        \raise -#4 \BOXTHEFRAME{%
           \hbox to #2{\raise #3\hbox to #2{\null #1\hfil}}}%
    \egroup
}%
\def\draftbox#1#2#3#4{%
 \leavevmode\raise -#4 \hbox{%
  \frame{\rlap{\protect\tiny #1}\hbox to #2%
   {\vrule height#3 width\z@ depth\z@\hfil}%
  }%
 }%
}%
\let\nographics=\@msidraft
\newif\ifwasdraft
\def\GRAPHIC#1#2#3#4#5{%
   \ifnum\@msidraft=\@ne\draftbox{#2}{#3}{#4}{#5}%
   \else\graffile{#1}{#3}{#4}{#5}%
   \fi
}
\def\addtoLaTeXparams#1{%
    \edef\LaTeXparams{\LaTeXparams #1}}%
\newif\ifBoxFrame \BoxFramefalse
\newif\ifOverFrame \OverFramefalse
\newif\ifUnderFrame \UnderFramefalse
\def\BOXTHEFRAME#1{%
   \hbox{%
      \ifBoxFrame
         \frame{#1}%
      \else
         {#1}%
      \fi
   }%
}
\def\doFRAMEparams#1{\BoxFramefalse\OverFramefalse\UnderFramefalse\readFRAMEparams#1\end}%
\def\readFRAMEparams#1{%
 \ifx#1\end%
  \let\next=\relax
  \else
  \ifx#1i\dispkind=\z@\fi
  \ifx#1d\dispkind=\@ne\fi
  \ifx#1f\dispkind=\tw@\fi
  \ifx#1t\addtoLaTeXparams{t}\fi
  \ifx#1b\addtoLaTeXparams{b}\fi
  \ifx#1p\addtoLaTeXparams{p}\fi
  \ifx#1h\addtoLaTeXparams{h}\fi
  \ifx#1X\BoxFrametrue\fi
  \ifx#1O\OverFrametrue\fi
  \ifx#1U\UnderFrametrue\fi
  \ifx#1w
    \ifnum\@msidraft=1\wasdrafttrue\else\wasdraftfalse\fi
    \@msidraft=\@ne
  \fi
  \let\next=\readFRAMEparams
  \fi
 \next
 }%
\def\IFRAME#1#2#3#4#5#6{%
      \bgroup
      \let\QCTOptA\empty
      \let\QCTOptB\empty
      \let\QCBOptA\empty
      \let\QCBOptB\empty
      #6%
      \parindent=0pt
      \leftskip=0pt
      \rightskip=0pt
      \setbox0=\hbox{\QCBOptA}%
      \@tempdima=#1\relax
      \ifOverFrame
          \typeout{This is not implemented yet}%
          \show\HELP
      \else
         \ifdim\wd0>\@tempdima
            \advance\@tempdima by \@tempdima
            \ifdim\wd0 >\@tempdima
               \setbox1 =\vbox{%
                  \unskip\hbox to \@tempdima{\hfill\GRAPHIC{#5}{#4}{#1}{#2}{#3}\hfill}%
                  \unskip\hbox to \@tempdima{\parbox[b]{\@tempdima}{\QCBOptA}}%
               }%
               \wd1=\@tempdima
            \else
               \textwidth=\wd0
               \setbox1 =\vbox{%
                 \noindent\hbox to \wd0{\hfill\GRAPHIC{#5}{#4}{#1}{#2}{#3}\hfill}\\%
                 \noindent\hbox{\QCBOptA}%
               }%
               \wd1=\wd0
            \fi
         \else
            \ifdim\wd0>0pt
              \hsize=\@tempdima
              \setbox1=\vbox{%
                \unskip\GRAPHIC{#5}{#4}{#1}{#2}{0pt}%
                \break
                \unskip\hbox to \@tempdima{\hfill \QCBOptA\hfill}%
              }%
              \wd1=\@tempdima
           \else
              \hsize=\@tempdima
              \setbox1=\vbox{%
                \unskip\GRAPHIC{#5}{#4}{#1}{#2}{0pt}%
              }%
              \wd1=\@tempdima
           \fi
         \fi
         \@tempdimb=\ht1
         \advance\@tempdimb by -#2
         \advance\@tempdimb by #3
         \leavevmode
         \raise -\@tempdimb \hbox{\box1}%
      \fi
      \egroup%
}%
\def\DFRAME#1#2#3#4#5{%
  \hfil\break
  \bgroup
     \leftskip\@flushglue
	 \rightskip\@flushglue
	 \parindent\z@
	 \parfillskip\z@skip
     \let\QCTOptA\empty
     \let\QCTOptB\empty
     \let\QCBOptA\empty
     \let\QCBOptB\empty
	 \vbox\bgroup
        \ifOverFrame 
           #5\QCTOptA\par
        \fi
        \GRAPHIC{#4}{#3}{#1}{#2}{\z@}%
        \ifUnderFrame 
           \break#5\QCBOptA
        \fi
	 \egroup
   \egroup
   \break
}%
\def\FFRAME#1#2#3#4#5#6#7{%
  \@ifundefined{floatstyle}
    {
     \begin{figure}[#1]%
    }
    {
	 \ifx#1h
      \begin{figure}[H]%
	 \else
      \begin{figure}[#1]%
	 \fi
	}
  \let\QCTOptA\empty
  \let\QCTOptB\empty
  \let\QCBOptA\empty
  \let\QCBOptB\empty
  \ifOverFrame
    #4
    \ifx\QCTOptA\empty
    \else
      \ifx\QCTOptB\empty
        \caption{\QCTOptA}%
      \else
        \caption[\QCTOptB]{\QCTOptA}%
      \fi
    \fi
    \ifUnderFrame\else
      \label{#5}%
    \fi
  \else
    \UnderFrametrue%
  \fi
  \begin{center}\GRAPHIC{#7}{#6}{#2}{#3}{\z@}\end{center}%
  \ifUnderFrame
    #4
    \ifx\QCBOptA\empty
      \caption{}%
    \else
      \ifx\QCBOptB\empty
        \caption{\QCBOptA}%
      \else
        \caption[\QCBOptB]{\QCBOptA}%
      \fi
    \fi
    \label{#5}%
  \fi
  \end{figure}%
 }%
\def\makeactives{
  \catcode`\"=\active
  \catcode`\;=\active
  \catcode`\:=\active
  \catcode`\'=\active
  \catcode`\~=\active
}
   \gdef\activesoff{%
      \def"{\string"}%
      \def;{\string;}%
      \def:{\string:}%
      \def'{\string'}%
      \def~{\string~}%
    }
\def\FRAME#1#2#3#4#5#6#7#8{%
 \bgroup
 \ifnum\@msidraft=\@ne
   \wasdrafttrue
 \else
   \wasdraftfalse%
 \fi
 \def\LaTeXparams{}%
 \dispkind=\z@
 \def\LaTeXparams{}%
 \doFRAMEparams{#1}%
 \ifnum\dispkind=\z@\IFRAME{#2}{#3}{#4}{#7}{#8}{#5}\else
  \ifnum\dispkind=\@ne\DFRAME{#2}{#3}{#7}{#8}{#5}\else
   \ifnum\dispkind=\tw@
    \edef\@tempa{\noexpand\FFRAME{\LaTeXparams}}%
    \@tempa{#2}{#3}{#5}{#6}{#7}{#8}%
    \fi
   \fi
  \fi
  \ifwasdraft\@msidraft=1\else\@msidraft=0\fi{}%
  \egroup
 }%
\def\TEXUX#1{"texux"}
\def\func#1{\mathop{\rm #1}\nolimits}%
\long\def\QQQ#1#2{%
     \long\expandafter\def\csname#1\endcsname{#2}}%
\long\def\QQA#1#2{}%
\def\QTR#1#2{{\csname#1\endcsname {#2}}}%
\def\EXPAND#1[#2]#3{}%
\def\NOEXPAND#1[#2]#3{}%
\def\LaTeXparent#1{}%
\def\ChildStyles#1{}%
\def\ChildDefaults#1{}%
\def\QTagDef#1#2#3{}%
  \providecommand{\UNICODE}[2][]{\protect\rule{.1in}{.1in}}
  \providecommand{\U}[1]{\protect\rule{.1in}{.1in}}
\def\QQfnmark#1{\footnotemark}
 \def\abstract{%
  \if@twocolumn
   \section*{Abstract (Not appropriate in this style!)}%
   \else \small 
   \begin{center}{\bf Abstract\vspace{-.5em}\vspace{\z@}}\end{center}%
   \quotation 
   \fi
  }%
   \def\registered{\relax\ifmmode{}\r@gistered
                    \else$\m@th\r@gistered$\fi}%
 \def\r@gistered{^{\ooalign
  {\hfil\raise.07ex\hbox{$\scriptstyle\rm\text{R}$}\hfil\crcr
  \mathhexbox20D}}}}{}%
\newdimen\theight
\def\newfmtname{LaTeX2e}
  \DeclareOldFontCommand{\rm}{\normalfont\rmfamily}{\mathrm}
  \DeclareOldFontCommand{\sf}{\normalfont\sffamily}{\mathsf}
  \DeclareOldFontCommand{\tt}{\normalfont\ttfamily}{\mathtt}
  \DeclareOldFontCommand{\bf}{\normalfont\bfseries}{\mathbf}
  \DeclareOldFontCommand{\it}{\normalfont\itshape}{\mathit}
  \DeclareOldFontCommand{\sl}{\normalfont\slshape}{\@nomath\sl}
  \DeclareOldFontCommand{\sc}{\normalfont\scshape}{\@nomath\sc}
\def\alpha{{\Greekmath 010B}}%
\def\beta{{\Greekmath 010C}}%
\def\gamma{{\Greekmath 010D}}%
\def\delta{{\Greekmath 010E}}%
\def\epsilon{{\Greekmath 010F}}%
\def\zeta{{\Greekmath 0110}}%
\def\eta{{\Greekmath 0111}}%
\def\theta{{\Greekmath 0112}}%
\def\iota{{\Greekmath 0113}}%
\def\kappa{{\Greekmath 0114}}%
\def\lambda{{\Greekmath 0115}}%
\def\mu{{\Greekmath 0116}}%
\def\nu{{\Greekmath 0117}}%
\def\xi{{\Greekmath 0118}}%
\def\pi{{\Greekmath 0119}}%
\def\rho{{\Greekmath 011A}}%
\def\sigma{{\Greekmath 011B}}%
\def\tau{{\Greekmath 011C}}%
\def\upsilon{{\Greekmath 011D}}%
\def\phi{{\Greekmath 011E}}%
\def\chi{{\Greekmath 011F}}%
\def\psi{{\Greekmath 0120}}%
\def\omega{{\Greekmath 0121}}%
\def\varepsilon{{\Greekmath 0122}}%
\def\vartheta{{\Greekmath 0123}}%
\def\varpi{{\Greekmath 0124}}%
\def\varrho{{\Greekmath 0125}}%
\def\varsigma{{\Greekmath 0126}}%
\def\varphi{{\Greekmath 0127}}%
\def\nabla{{\Greekmath 0272}}
\def\FindBoldGroup{%
   {\setbox0=\hbox{$\mathbf{x\global\edef\theboldgroup{\the\mathgroup}}$}}%
}
\def\Greekmath#1#2#3#4{%
    \if@compatibility
        \ifnum\mathgroup=\symbold
           \mathchoice{\mbox{\boldmath$\displaystyle\mathchar"#1#2#3#4$}}%
                      {\mbox{\boldmath$\textstyle\mathchar"#1#2#3#4$}}%
                      {\mbox{\boldmath$\scriptstyle\mathchar"#1#2#3#4$}}%
                      {\mbox{\boldmath$\scriptscriptstyle\mathchar"#1#2#3#4$}}%
        \else
           \mathchar"#1#2#3#4%
        \fi 
    \else 
        \FindBoldGroup
        \ifnum\mathgroup=\theboldgroup 
           \mathchoice{\mbox{\boldmath$\displaystyle\mathchar"#1#2#3#4$}}%
                      {\mbox{\boldmath$\textstyle\mathchar"#1#2#3#4$}}%
                      {\mbox{\boldmath$\scriptstyle\mathchar"#1#2#3#4$}}%
                      {\mbox{\boldmath$\scriptscriptstyle\mathchar"#1#2#3#4$}}%
        \else
           \mathchar"#1#2#3#4%
        \fi     	    
	  \fi}
\newif\ifGreekBold  \GreekBoldfalse
\let\SAVEPBF=\pbf
\def\pbf{\GreekBoldtrue\SAVEPBF}%
  \newcounter{equationnumber}  
  \def\mathletters{%
     \addtocounter{equation}{1}
     \edef\@currentlabel{\theequation}%
     \setcounter{equationnumber}{\c@equation}
     \setcounter{equation}{0}%
     \edef\theequation{\@currentlabel\noexpand\alph{equation}}%
  }
    \def\BibTeX{{\rm B\kern-.05em{\sc i\kern-.025em b}\kern-.08em
                 T\kern-.1667em\lower.7ex\hbox{E}\kern-.125emX}}}{}%
\def\AmS{{\protect\usefont{OMS}{cmsy}{m}{n}%
                A\kern-.1667em\lower.5ex\hbox{M}\kern-.125emS}}}{}%
\def\@@eqncr{\let\@tempa\relax
    \ifcase\@eqcnt \def\@tempa{& & &}\or \def\@tempa{& &}%
      \else \def\@tempa{&}\fi
     \@tempa
     \if@eqnsw
        \iftag@
           \@taggnum
        \else
           \@eqnnum\stepcounter{equation}%
        \fi
     \fi
     \global\tag@false
     \global\@eqnswtrue
     \global\@eqcnt\z@\cr}
\def\TCItag{\@ifnextchar*{\@TCItagstar}{\@TCItag}}
\def\@TCItag#1{%
    \global\tag@true
    \global\def\@taggnum{(#1)}}
\def\@TCItagstar*#1{%
    \global\tag@true
    \global\def\@taggnum{#1}}
\def\ExitTCILatex{\makeatother }
\let\DOTSI\relax
\def\RIfM@{\relax\ifmmode}%
\def\FN@{\futurelet\next}%
\def\iint{\DOTSI\intno@\tw@\FN@\ints@}%
\def\iiint{\DOTSI\intno@\thr@@\FN@\ints@}%
\def\iiiint{\DOTSI\intno@4 \FN@\ints@}%
\def\idotsint{\DOTSI\intno@\z@\FN@\ints@}%
\def\ints@{\findlimits@\ints@@}%
\newif\iflimtoken@
\newif\iflimits@
\def\findlimits@{\limtoken@true\ifx\next\limits\limits@true
 \else\ifx\next\nolimits\limits@false\else
 \limtoken@false\ifx\ilimits@\nolimits\limits@false\else
 \ifinner\limits@false\else\limits@true\fi\fi\fi\fi}%
\def\multint@{\int\ifnum\intno@=\z@\intdots@                          
 \else\intkern@\fi                                                    
 \ifnum\intno@>\tw@\int\intkern@\fi                                   
 \ifnum\intno@>\thr@@\int\intkern@\fi                                 
 \int}
\def\multintlimits@{\intop\ifnum\intno@=\z@\intdots@\else\intkern@\fi
 \ifnum\intno@>\tw@\intop\intkern@\fi
 \ifnum\intno@>\thr@@\intop\intkern@\fi\intop}%
\def\intic@{%
    \mathchoice{\hskip.5em}{\hskip.4em}{\hskip.4em}{\hskip.4em}}%
\def\negintic@{\mathchoice
 {\hskip-.5em}{\hskip-.4em}{\hskip-.4em}{\hskip-.4em}}%
\def\ints@@{\iflimtoken@                                              
 \def\ints@@@{\iflimits@\negintic@
   \mathop{\intic@\multintlimits@}\limits                             
  \else\multint@\nolimits\fi                                          
  \eat@}
 \else                                                                
 \def\ints@@@{\iflimits@\negintic@
  \mathop{\intic@\multintlimits@}\limits\else
  \multint@\nolimits\fi}\fi\ints@@@}%
\def\intkern@{\mathchoice{\!\!\!}{\!\!}{\!\!}{\!\!}}%
\def\plaincdots@{\mathinner{\cdotp\cdotp\cdotp}}%
\def\intdots@{\mathchoice{\plaincdots@}%
 {{\cdotp}\mkern1.5mu{\cdotp}\mkern1.5mu{\cdotp}}%
 {{\cdotp}\mkern1mu{\cdotp}\mkern1mu{\cdotp}}%
 {{\cdotp}\mkern1mu{\cdotp}\mkern1mu{\cdotp}}}%
\def\RIfM@{\relax\protect\ifmmode}
\def\text{\RIfM@\expandafter\text@\else\expandafter\mbox\fi}
\let\nfss@text\text
\def\text@#1{\mathchoice
   {\textdef@\displaystyle\f@size{#1}}%
   {\textdef@\textstyle\tf@size{\firstchoice@false #1}}%
   {\textdef@\textstyle\sf@size{\firstchoice@false #1}}%
   {\textdef@\textstyle \ssf@size{\firstchoice@false #1}}%
   \glb@settings}
\def\textdef@#1#2#3{\hbox{{%
                    \everymath{#1}%
                    \let\f@size#2\selectfont
                    #3}}}
\newif\iffirstchoice@
\def\Let@{\relax\iffalse{\fi\let\\=\cr\iffalse}\fi}%
\def\vspace@{\def\vspace##1{\crcr\noalign{\vskip##1\relax}}}%
\def\multilimits@{\bgroup\vspace@\Let@
 \baselineskip\fontdimen10 \scriptfont\tw@
 \advance\baselineskip\fontdimen12 \scriptfont\tw@
 \lineskip\thr@@\fontdimen8 \scriptfont\thr@@
 \lineskiplimit\lineskip
 \vbox\bgroup\ialign\bgroup\hfil$\m@th\scriptstyle{##}$\hfil\crcr}%
\def\Sb{_\multilimits@}%
\def\endSb{\crcr\egroup\egroup\egroup}%
\def\Sp{^\multilimits@}%
\newdimen\ex@
\def\rightarrowfill@#1{$#1\m@th\mathord-\mkern-6mu\cleaders
 \hbox{$#1\mkern-2mu\mathord-\mkern-2mu$}\hfill
 \mkern-6mu\mathord\rightarrow$}%
\def\leftarrowfill@#1{$#1\m@th\mathord\leftarrow\mkern-6mu\cleaders
 \hbox{$#1\mkern-2mu\mathord-\mkern-2mu$}\hfill\mkern-6mu\mathord-$}%
\def\leftrightarrowfill@#1{$#1\m@th\mathord\leftarrow
\mkern-6mu\cleaders
 \hbox{$#1\mkern-2mu\mathord-\mkern-2mu$}\hfill
 \mkern-6mu\mathord\rightarrow$}%
\def\overrightarrow{\mathpalette\overrightarrow@}%
\def\overrightarrow@#1#2{\vbox{\ialign{##\crcr\rightarrowfill@#1\crcr
 \noalign{\kern-\ex@\nointerlineskip}$\m@th\hfil#1#2\hfil$\crcr}}}%
\def\overleftarrow{\mathpalette\overleftarrow@}%
\def\overleftarrow@#1#2{\vbox{\ialign{##\crcr\leftarrowfill@#1\crcr
 \noalign{\kern-\ex@\nointerlineskip}$\m@th\hfil#1#2\hfil$\crcr}}}%
\def\overleftrightarrow{\mathpalette\overleftrightarrow@}%
\def\overleftrightarrow@#1#2{\vbox{\ialign{##\crcr
   \leftrightarrowfill@#1\crcr
 \noalign{\kern-\ex@\nointerlineskip}$\m@th\hfil#1#2\hfil$\crcr}}}%
\def\underrightarrow{\mathpalette\underrightarrow@}%
\def\underrightarrow@#1#2{\vtop{\ialign{##\crcr$\m@th\hfil#1#2\hfil
  $\crcr\noalign{\nointerlineskip}\rightarrowfill@#1\crcr}}}%
\def\underleftarrow{\mathpalette\underleftarrow@}%
\def\underleftarrow@#1#2{\vtop{\ialign{##\crcr$\m@th\hfil#1#2\hfil
  $\crcr\noalign{\nointerlineskip}\leftarrowfill@#1\crcr}}}%
\def\underleftrightarrow{\mathpalette\underleftrightarrow@}%
\def\underleftrightarrow@#1#2{\vtop{\ialign{##\crcr$\m@th
  \hfil#1#2\hfil$\crcr
 \noalign{\nointerlineskip}\leftrightarrowfill@#1\crcr}}}%
\def\qopnamewl@#1{\mathop{\operator@font#1}\nlimits@}
\let\nlimits@\displaylimits
\def\setboxz@h{\setbox\z@\hbox}
\def\varlim@#1#2{\mathop{\vtop{\ialign{##\crcr
 \hfil$#1\m@th\operator@font lim$\hfil\crcr
 \noalign{\nointerlineskip}#2#1\crcr
 \noalign{\nointerlineskip\kern-\ex@}\crcr}}}}
 \def\rightarrowfill@#1{\m@th\setboxz@h{$#1-$}\ht\z@\z@
  $#1\copy\z@\mkern-6mu\cleaders
  \hbox{$#1\mkern-2mu\box\z@\mkern-2mu$}\hfill
  \mkern-6mu\mathord\rightarrow$}
\def\leftarrowfill@#1{\m@th\setboxz@h{$#1-$}\ht\z@\z@
  $#1\mathord\leftarrow\mkern-6mu\cleaders
  \hbox{$#1\mkern-2mu\copy\z@\mkern-2mu$}\hfill
  \mkern-6mu\box\z@$}
\def\projlim{\qopnamewl@{proj\,lim}}
\def\injlim{\qopnamewl@{inj\,lim}}
\def\varinjlim{\mathpalette\varlim@\rightarrowfill@}
\def\varprojlim{\mathpalette\varlim@\leftarrowfill@}
\def\varliminf{\mathpalette\varliminf@{}}
\def\varliminf@#1{\mathop{\underline{\vrule\@depth.2\ex@\@width\z@
   \hbox{$#1\m@th\operator@font lim$}}}}
\def\varlimsup{\mathpalette\varlimsup@{}}
\def\varlimsup@#1{\mathop{\overline
  {\hbox{$#1\m@th\operator@font lim$}}}}
\def\align{\@verbatim \frenchspacing\@vobeyspaces \@alignverbatim
You are using the "align" environment in a style in which it is not defined.}
\let\csname endalign*\endcsname =\endtrivlist
\def\alignat{\@verbatim \frenchspacing\@vobeyspaces \@alignatverbatim
You are using the "alignat" environment in a style in which it is not defined.}
\let\csname endalignat*\endcsname =\endtrivlist
\def\xalignat{\@verbatim \frenchspacing\@vobeyspaces \@xalignatverbatim
You are using the "xalignat" environment in a style in which it is not defined.}
\let\csname endxalignat*\endcsname =\endtrivlist
\def\gather{\@verbatim \frenchspacing\@vobeyspaces \@gatherverbatim
You are using the "gather" environment in a style in which it is not defined.}
\let\csname endgather*\endcsname =\endtrivlist
\def\multiline{\@verbatim \frenchspacing\@vobeyspaces \@multilineverbatim
You are using the "multiline" environment in a style in which it is not defined.}
\let\csname endmultiline*\endcsname =\endtrivlist
\def\arrax{\@verbatim \frenchspacing\@vobeyspaces \@arraxverbatim
You are using a type of "array" construct that is only allowed in AmS-LaTeX.}
\def\tabulax{\@verbatim \frenchspacing\@vobeyspaces \@tabulaxverbatim
You are using a type of "tabular" construct that is only allowed in AmS-LaTeX.}
\let\csname endarrax*\endcsname =\endtrivlist
\let\csname endtabulax*\endcsname =\endtrivlist
 \def\endequation{%
     \ifmmode\ifinner 
      \iftag@
        \addtocounter{equation}{-1} 
        $\hfil
           \displaywidth\linewidth\@taggnum\egroup \endtrivlist
        \global\tag@false
        \global\@ignoretrue   
      \else
        $\hfil
           \displaywidth\linewidth\@eqnnum\egroup \endtrivlist
        \global\tag@false
        \global\@ignoretrue 
      \fi
     \else   
      \iftag@
        \addtocounter{equation}{-1} 
        \eqno \hbox{\@taggnum}
        \global\tag@false%
        $$\global\@ignoretrue
      \else
        \eqno \hbox{\@eqnnum}
        $$\global\@ignoretrue
      \fi
     \fi\fi
 } 
 \newif\iftag@ \tag@false
 \def\TCItag{\@ifnextchar*{\@TCItagstar}{\@TCItag}}
 \def\@TCItag#1{%
     \global\tag@true
     \global\def\@taggnum{(#1)}}
 \def\@TCItagstar*#1{%
     \global\tag@true
     \global\def\@taggnum{#1}}
     \def\tag{\@ifnextchar*{\@tagstar}{\@tag}}
     \def\@tag#1{%
         \global\tag@true
         \global\def\@taggnum{(#1)}}
     \def\@tagstar*#1{%
         \global\tag@true
         \global\def\@taggnum{#1}}
\def\dfrac#1#2{{\displaystyle {#1 \over #2}}}%
\begin{document}
	
	%
	%
	%
	%
	%
	%
	%
	%
	%

	\title[Lipschitz regularity of the i.m. of Random Dynamical Systems]
	{Lipschitz regularity of the invariant measure of Random Dynamical Systems}

	\author{Davi Lima}
	
	\address{%
		Av Lourival Melo Mota s/n,
		Alagoas-
		Brazil}
	
	\email{davi,.santos@im.ufal.br}
	
	\thanks{This work was partially supported by CNPq and FAPEAL}
	\author{Rafael Lucena}
	\address{%
		Av Lourival Melo Mota s/n,
		Alagoas-Brazil}
	\email{rafael.lucena@im.ufal.br}
	\subjclass{Primary 37A25, 37A10; Secondary 37C30, 37D50}
	
	\keywords{Spectral gap, transfer operator, random dynamical systems, decay of correlation, regularity}
	
	\date{April 19, 2024}
	\dedicatory{To our great friend Helton Ferreira Albuquerque Medeiros (in memorian)}
	
	\begin{abstract}
		In this article we derive a regularity result for the disintegration of the invariant measure associated to a class of Random Dynamical Systems - RDS. The results of this work are obtained by  constructing a suitable anisotropic normed space defined by the Wasserstein-Kantorovich-like metric and understanding the dynamics of the associated transfer operator in a neighborhood of its fixed point. Precisely, we employ functional analytic techniques to demonstrate a spectral gap for its action on suitable spaces of signed measures. We apply this analysis to prove an exponential decay of correlation statement for Lipschitz observables and statistical properties of the RDS.
	\end{abstract}
	
	\maketitle
	

	\tableofcontents
	\section{Introduction}
	
	
	Regarding relevant properties of invariant measures, assessing their degree of regularity is useful. This type of problem can be approached from different perspectives. For instance, if the invariant measure of the system is absolutely continuous with respect to a reference measure, its regularity can be evaluated based on the regularity of its Radon-Nikodym derivative (see \cite{L2}). On the other hand, in the absence of a reference measure, understanding the regularity of the disintegration with respect to a measurable partition can be valuable for various applications (see \cite{GLu}, \cite{RRR}, and \cite{RRRSTAB}).
	
	
	In this work, we demonstrate that the disintegration of the invariant measure of a Random Dynamical System (RDS) is Lipschitz continuous, provided the fiber map is contracting. Some recent research has focused on the regularity of disintegrations and works in the field include \cite{GLu}, \cite{RRR}, \cite{RRRSTAB}, \cite{GP}, and \cite{BM}. In \cite{GLu}, the authors established bounded variation regularity for the disintegration of the invariant measure of Lorenz-like maps with respect to a specific topology, which they used to demonstrate the statistical stability of the invariant measure under certain deterministic perturbations. Similarly, in \cite{RRR}, the authors proved Hölder regularity for the disintegration in a class of piecewise partially hyperbolic maps semi-conjugated to non-uniformly expanding systems. This finding enabled them to obtain exponential decay of correlations for Hölder observables and other limit theorems. In \cite{RRRSTAB}, Hölder regularity was leveraged to prove statistical stability results for the equilibrium states of partially hyperbolic maps.
	
	We examine the action of the transfer operator associated with a class of random dynamical systems. We establish appropriate anisotropic spaces by extending a spectral gap from a subshift of finite type to the transfer operator of the skew-product map. Following the general approach outlined in \cite{GLu} and \cite{RRR}, we generalize the techniques and apply them to the system under study. In essence, we achieve a fixed point of the transfer operator of the skew-product within a specific anisotropic space by lifting to the RDS a known spectral gap from the base system.

	This type of study is often carried out using the Ionescu-Tulcea and Marinescu Theorem by constructing a pair of suitable function spaces—a stronger space and an auxiliary weaker space—such that the action of the Perron-Frobenius operator on the stronger space exhibits a spectral gap (see \cite{Ba}, \cite{L2}, \cite{BG}, \cite{G}, and \cite{V} for introductory texts). Commonly, these approaches achieve regularity through compact inclusion arguments. However, our approach differs; we obtain regularity by examining the behavior of the transfer operator in the neighbourhood of the constructed fixed point. Specifically, we achieve uniqueness and regularity as a result of convergence to the equilibrium and the presence of a spectral gap.
	
	We use these facts to obtain exponential decay of correlations over Lipschitz observables. This approach gives stronger convergence results than \cite{GLu} and open the possibility to obtain stronger statistical statements. In the examples, as a simplest case of the main system of this work, some results on hyperbolic iterated functions systems (IFS) are also provided, where we conclude some properties for its invariant measure and obtain limit theorems. 
	
	We apply these results to achieve exponential decay of correlations for Lipschitz observables. Moreover, this approach yields stronger convergence results than those in \cite{GLu}(altough for diiferente class of maps) and opens up opportunities for making more robust statistical statements. In the examples, as a particular case, we provide results on hyperbolic iterated function systems (IFS), leading to conclusions about the properties of its invariant measure and the derivation of limit theorems.
	

	\medskip
	
	\noindent\textbf{Organization of the article.} The paper is structured as follows:
	
	\begin{itemize}
		\item Section \ref{sec1}: we introduce the kind of systems we consider in the paper. Essentially, it is system of the type $F(x,y)=(\sigma(\underline{x}), G(\underline{x},y))$, where $\sigma$ is the left shift. Here and until section \ref{kfdjfkjdfeddere}, where more regularity is required, we do not ask any kind of regularity on $G$ in the horizontal direction (for the functions $\underline{x} \longmapsto G(\underline{x},y)$, $y$ fixed);
		
		\item Section \ref{sec:spaces}: we introduce the functional spaces used in the paper and
		discussed in the last paragraph;
		
		\item Section \ref{eryet}: we show the basic properties of the transfer operator of $F$
		when applied to these spaces. In particular we see that there is a useful
		\textquotedblleft Perron-Frobenius\textquotedblright-like formula (see
		Proposition \ref{niceformulaab});
		
		\item Section \ref{yfjdhf}: we discuss the basic properties of the iteration of the
		transfer operator on the spaces we consider. In particular, we prove a \emph{%
			Lasota-Yorke inequality and a convergence to equilibrium statement} (see
		Propositions \ref{LYinfty} and \ref{5.8});
		
		\item Section \ref{wqweer}: we use the convergence to equilibrium and the
		Lasota-Yorke inequalities to prove the \emph{spectral gap} for the transfer
		operator associated to the system restricted to a suitable strong space (see
		Theorem \ref{spgap});
		
		\item Section \ref{kfdjfkjdfeddere}: we consider a similar system with some more
		regularity on the family of functions $\{G(\cdot,y)\}_{y \in K}$, $\underline{x} \longmapsto G(\underline{x}, y)$: 
		we ask that the restriction of the function $\underline{x} \longmapsto G(\underline{x}, y)$ is $k_{y}$-Lipschitz, where the family $\{k_{y}\}_{y\in K}$ is uniformly bounded. For this sort of system, we prove a stronger regularity result for the iteration of probability measures (see Theorem \ref{hdjfhsdfjsd} and Remark \ref{kjedhkfjhksjdf}) and show that the $F$-invariant invariant measure has a Lipschitz disintegration along the stable fibers (see Theorem \ref{regu});
		
		\item Section \ref{decayy}: we use the Lipschitz regularity of the invariant measure established in section \ref{kfdjfkjdfeddere}, to prove that the abstract set of functions on which the system has decay of correlations (see Proposition \ref{kkkskdjd}) contains all Lipschitz functions (see Proposition \ref{ksjdhsdd}) and we finalize the section with the main decay of correlations statement, Theorem \ref{skdsdas}.
		
	\end{itemize}

\section{Settings\label{sec1}}

In this subsection we give the settings of our results. First, We fix the notation
$$L_{X,Y}(\varphi):=\sup_{x\neq y}\dfrac{d_Y(\varphi(x),\varphi(y))}{d_X(x,y)},$$
if $\varphi:(X,d_X)\rightarrow (Y,d_Y)$ is a Lipstchitz map from the metric space $(X,d_X)$ to the metric space $(Y,d_Y).$ If $Y=\mathbb{R}$ we denote $L_{X,\mathbb{R}}(f)$ just by $L_X(f)$. This is the standard notation except in two cases, the first one when $Y=\mathbb{R}$ and $(X,d_X)=(\Sigma^{+}_A,d_{\theta})$ which is defined below and the second case in section \ref{kfdjfkjdfeddere}. In the case below, for subshifts of finite type we just denote $|\varphi|_{\theta}:=L_{\Sigma^{+}_A,\mathbb{R}}(\varphi).$ 

\subsubsection{Sub-shifts of finite type\label{sec2}}
Let $A$ be an aperiodic matrix and $\Sigma^+_A$ the subshift of finite type associated to $A$, i.e. 
$$\Sigma^+_A=\{\underline{x}=\{x_i\}_{i\in \mathbb{N}}; A_{x_ix_{i+1}}=1 \ \ \forall i\}, $$where $x_i \in \{1, \cdots, N\}$ for all $i$. 
On $\Sigma^+_A$ we consider the metric $$d_\theta (\underline{x},\underline{y}) = \sum _{i=0} ^{\infty}{\theta ^i (1-\delta (x_i,y_i))},$$where $\theta \in (0,1)$ and $\delta$ is defined by $\delta(x,y)=1$ if $x=y$ and $\delta(x,y)=0$ if $x\neq y$. 
We consider a Markov measure associated to $(\Sigma^+_A, \sigma)$ and denote it by $m$. Here 
Let $\mathcal{F}_{\theta}(\Sigma^+_A)$ be the real vector space of the Lipschitz functions on $\Sigma^+_A$, i,e.
$$\mathcal{F}_{\theta}(\Sigma^+_A):=\{\varphi:\Sigma^+_A \longrightarrow \mathbb{R}: |\varphi|_\theta <\infty \}$$ 
endowed with the norm $||\cdot ||_\theta$, defined by $||\varphi||_{\theta} = ||\varphi||_\infty + |\varphi|_{\theta})$. In this case, we know that the shift map $\sigma:\Sigma^+_A\rightarrow \Sigma^+_A$, defined by $\sigma(\underline{x})_i=x_{i+1}$ for all $i \geq 0$, is such that its Perron-Frobenius operator\footnote{The unique operator $P_{\sigma}:L_1(m) \longrightarrow L_1(m) $ such that for all $\varphi \in L_1(m) $ and for all $\psi \in L_{\infty}(m) $ it holds $\displaystyle{\int {\varphi \cdot \psi \circ \sigma}dm = \int {P_{\sigma}(\varphi) \cdot \psi}dm}$.}, $P_{\sigma}:\mathcal{F}_{\theta}(\Sigma^+_A) \longrightarrow \mathcal{F}_{\theta}(\Sigma^+_A)$, has spectral gap.
It means that $P_{\sigma}:\mathcal{F}_{\theta}(\Sigma^+_A) \longrightarrow \mathcal{F}_{\theta}(\Sigma^+_A)$ can be written (see \cite {Ba} or apply Theorem 1 of \cite{RE}) as
$$\func{P}_{\sigma}=\Pi_{\sigma}+\mathcal{N}_\sigma$$
where the spectral radius of $\mathcal{N_{\sigma}}$ is smaller than 1, that is, $\rho(\mathcal{N}_\sigma)<1$, and $\Pi _\sigma$ is a projection ($\Pi _\sigma ^2= \Pi _\sigma$). So, there are $D>0$ and $r\in (0,1)$ 	such that for every $\varphi\in \ker(\Pi _\sigma)$ it holds
$$||\func{P}_{\sigma}^n\varphi||_{\theta}\le Dr^n||\varphi||_{\theta}, \ \ \forall n \geq 0.$$

\subsubsection{Contracting Fiber Maps\label{sec22}}
Let $(K,d)$ be a compact metric space equipped with its Borel $\sigma$-algebra, which is generated by the open sets. We define the map $$F:\Sigma^+_A\times K\rightarrow \Sigma^+_A\times K$$ by 
\begin{equation}\label{cccccc}
	F(\underline{x},z)=(\sigma(\underline{x}),G(\underline{x},z)),
\end{equation}where $G: \Sigma^+_A\times K \longrightarrow K$ is a measurable function, and the space $\Sigma^+_A\times K$ is equipped with the product $\sigma$-algebra. 

For the sake of simplicity, in certain parts of the text, we denote a fiber simply as $\gamma$ instead of $\gamma_{\underline{x}}$. However, in the proofs of the results presented in this paper, we make a clear distinction between these notations. Additionally, to avoid the use of multiplicative constants, we assume $\diam(K)=1$. 
Throughout this paper, we denote by $\pi_1$ and $\pi_2$ the projections on $\Sigma^+_A$ and $K$, respectively. 

Let $(X, \mathcal{X})$ be a measurable space, and let $\mu$ be a measure on $X$. If $f: X \longrightarrow \mathbb{R}$ is a function in $L_\mu ^1$, we define the signed measure $f\mu$ on $(X, \mathcal{X})$ by $$f\mu(E) := \int _E f d\mu,$$ for all $E \in \mathcal{X}$.


\subsubsection{Properties of $G$}
In what follows, we present the two main hypotheses regarding the coordinate function $G$. We emphasize that (G2) is a necessary hypothesis only for Theorems \ref{regu} and \ref{skdsdas}.

\begin{description}
	\item[G1] Consider the $F$-invariant lamination 
	\begin{equation}
		\mathcal{F}^{s}:=\{\gamma_{\underline{x}}\}_{\underline{x}\in \Sigma_A^+}, \ \ \textnormal{where} \ \gamma_{\underline{x}}: = \{ \underline{x}\} \times K.
		\label{fol}
	\end{equation} 
	We suppose that $\mathcal{F}^{s}$ is contracted: there
	exists $0<\alpha <1$ such that for $m$-almost all $\underline{x}\in \Sigma_A^+$ it holds%
	\begin{equation}
		d(G(\underline{x},y_{1}),G(\underline{x},y_2))\leq \alpha d(y_{1},y_{2}), \quad \forall
		y_{1},y_{2}\in K.  \label{contracting1}
	\end{equation}
	\item[G2]For all $y \in K$ there exists $k_y \geq 0$ such that  
	$$d(G(\underline{x}^1, y),G(\underline{x}^2, y))\leq k_yd_{\theta}(\underline{x}^1,\underline{x}^2).$$Moreover
	\begin{equation}\label{hhfksdjfksdfsdfsd}
		H:=\sup_{y\in K} k_y<\infty.
	\end{equation}
\end{description}

The example below illustrates how our formalism can be naturally applied within the context of a standard contracting IFS.

\begin{example}\label{kjfhjsfg}
	Let $ I_1, \cdots, I_d$ be closed and disjoint intervals in $\mathbb{R}$ and let $I$ be the convex hull of them. Consider $g: I_1 \cup ... \cup I_d \longrightarrow I$ such that $|g'| \geq \lambda >1$ and $g(I_i)=I$ diffeomorphically for all $i=1, ..., d $. Set $\varphi_i:=(g|_{I_i})^{-1}:I \longrightarrow I_i$, $A=(a_{ij})_{i,j=1}^n$ where $a_{ij}=1$ for all $i,j\in \{1,2,...,d\}$ and $G:\Sigma_A^+ \times I \longrightarrow I$, by $G(\underline{x}, y) := \varphi_{ x_0}(y)$, if $\underline{x}=(x_0,x_1,...)$. Define $F: \Sigma_A^+ \times I \longrightarrow \Sigma_A^+ \times I $, by  $F(\underline{x},y)= (\sigma (\underline{x}), G(\underline{x}, y))$.
	We note that by the assumptions on $g$ and the Mean Value Theorem it follows
	$$d(G(\underline{x},y_1),G(\underline{x},y_2))=d(\varphi_{x_0}(y_1),\varphi_{x_0}(y_2))\le \lambda^{-1}d(y_1,y_2)$$ 
	this shows that \textbf{G1} is satisfied, where $K=I$ and $\alpha=\lambda^{-1}.$

	Now we verify the hypothesis \textbf{G2}. If $\underline{x}^1=(x^1_i)_{i\in \mathbb{Z}^+},\underline{x}^2=(x^2_i)_{i\in \mathbb{Z}^+}$ belong to a same cylinder $[0;j]$ then
	$$d(G(\underline{x}^1,y),G(\underline{x}^2,y)=d(\varphi_{x^1_0}(y),\varphi_{x^2_0}(y))=0$$
	while otherwise 
	$$d(G(\underline{x}^1,y),G(\underline{x}^2,y)=d(\varphi_{x^1_0}(y),\varphi_{x^2_0}(y))\le \diam(I)=\dfrac{\theta \diam(I)}{\theta}.$$
	but $d_{\theta}(\underline{x},\underline{y})=\sum_{i=0}^{\infty} \theta^i(1-\delta(x^1_i,y^2_i))\ge 1>\theta$, because $x_0\neq y_0.$ Therefore,
	$$d(G(\underline{x}^1,y),G(\underline{x}^2,y)\le R d_{\theta}(\underline{x}^1,\underline{x}^2)$$
	where $R=\dfrac{\diam(I)}{\theta}<\infty$. Since the constant $R$ found here does not depend on $y$, the previous computation shows that \textbf{G2} is also satisfied and the number $H$ as in the equation (\ref{hhfksdjfksdfsdfsd}) is such that $H\le R$.
\end{example}
In \cite{Hc}, Hutchinson, J.E. proved the existence of an invariant measure: a measure $\mu$ on $K$ which satisfies the relation 
\begin{equation}\label{nvbbjdjfdf}
	\mu = \sum_{1}^{N}p_i\varphi_{i}^{\ast}\mu,
\end{equation}where $(p_1, p_2, \cdots, p_N)$ is a probability vector, $\sum_{1}^{N}p_i=1$ and $\varphi_{i}^{\ast}$ denotes the push-foward of $\mu$ by $\varphi_i$, that is, $\varphi_i^{\ast}\mu(E)=\mu(\varphi_i^{-1}(E))$. Moreover, if $m$ is the Bernoulli measure defined by $p_1, \cdots, p_N$, then the product $m \times \mu$ is invariant by the skew product $F: \Sigma_A^+ \times K \longrightarrow \Sigma_A^+ \times K $, defined by the above example. This model will serve as a guide for the application discussed in Section \ref{bggvd} (see Example \ref{dereroidf}).

\subsection{Statements of the Main Results}\label{sotmr}

In this section, we present the main results of this article, which are derived from analysing the action of the transfer operator $\func{F}^{\ast }$ associated with $F$. The operator $\func{F}^{\ast }$  is defined for a given signed measure $\mu$ and a measurable set $E \subset \Sigma_A^+ \times K$ by the expression: $$\func{F}^{\ast}\mu (E):=\mu(F^{-1}(E)).$$ Our approach involves analysing the action of $\func{F}^{\ast}$ on appropriate normed spaces of signed measures, denoted by $(\mathcal{L}^{\infty}, ||\cdot||{\infty})$ and $(S^{\infty}, ||\cdot||_{S^{\infty}})$. These spaces are constructed using a variant of the Wasserstein-Kantorovich metric (see Definition \ref{wasserstein}). Informally, $\mathcal{L}^\infty$ (see Definition \ref{L}) represents the space of signed measures $\mu$ defined on $\Sigma_A^+ \times K$, such that the family of its disintegration along $\mathcal{F}^s$ (see equation (\ref{fol})) is uniformly bounded with respect to the Wasserstein-Kantorovich metric. On the other hand, $S^\infty$ (see Definition \ref{iuytietrsdf}) is the space of signed measures $\mu$ on $\Sigma_A^+ \times K$ satisfying the following conditions:

\begin{enumerate}
	\item The projection of $\mu$ onto the first coordinate, $\pi_1^{\ast} \mu$, where $\pi_1(x, y) = x$ for all $x \in \Sigma_A^+$ and $y \in K$, is absolutely continuous with respect to $m$;
	\item The density of $\pi_1^{\ast} \mu$ with respect to $m$, denoted $\frac{d \pi_1^{\ast} \mu}{d m}$, belongs to the space $\mathcal{F}_{\theta}(\Sigma^+_A)$.
\end{enumerate}Here, $\pi_1^*$ denotes the push-forward operator. This framework enables a rigorous examination of the properties of $\func{F}^{\ast}$, its effect on the measures in $S^\infty$, and its implications for the dynamics of $F$.

\begin{athm}\label{probun}
	Suppose that $F$ satisfies (G1). Then, there exists a unique $F$-invariant probability measure $\mu _0 \in S^{\infty}$ which satisfies $||\mu_0||_\infty =1$ and $||\mu_0||_{S^\infty}=2$.
\end{athm}

The second theorem demonstrates that $\func{F}^{\ast }: S^{\infty} \longrightarrow S^{\infty}$ satisfies the \textit{spectral gap} property. This result has far-reaching implications for the dynamics, as it leads to several limit theorems. For example, we obtain an exponential rate of convergence for the limit $$\lim_n {C_n(f,g)}=0,$$where $$C_n(f,g):=\left| \int{(g \circ F^n )  f}d\mu_0 - \int{g  }d\mu_0 \int{f  }d\mu_0 \right|,$$with $g: \Sigma_A^+ \times K \longrightarrow \mathbb{R} $ as a Lipschitz function and $f \in \Theta _{\mu _0}$. The set $\Theta _{\mu _0}$ is defined by 
\begin{equation}\label{teta}
	\Theta _{\mu _0}:= \{ f: \Sigma_A^+ \times K \longrightarrow \mathbb{R}; f\mu_0 \in S^\infty\},
\end{equation}

\begin{athm}[Spectral gap]\label{spgap}
	If $F$ satisfies (G1), then the operator $\func{F}^{\ast
	}:S^{\infty}\longrightarrow S^{\infty}$ can be written as 
	\begin{equation*}
		\func{F}^{\ast }=\func{P}+\func{N},
	\end{equation*}%
	where
	
	\begin{enumerate}
		\item[a)] $\func{P}$ is a projection i.e. $\func{P} ^2 = \func{P}$ and $\dim
		Im (\func{P})=1$;
		
		\item[b)] there are $0<\xi <1$ and $M>0$ such that $\forall \mu \in S^\infty$ $$||\func{N}^{n}(\mu )||_{S^{\infty}}\leq M\xi ^{n} ||\mu||_{S^{\infty}} ;$$ 
		
		\item[c)] $\func{P}\func{N}=\func{N}\func{P}=0$.
	\end{enumerate}
\end{athm}

The third theorem provides an estimate for the Lipschitz constant (see equation (\ref{Lips2}) in Definition \ref{Lips3}) associated with the disintegration of the unique $F$-invariant measure $\mu_0 \in S^{\infty}$. This type of result has numerous applications, and similar estimates for other systems can be found in \cite{GLu}, \cite{BM} and \cite{RRR}. For instance, in \cite{GLu}, the regularity of the disintegration is used to demonstrate the stability of the $F$-invariant measure under a particular type of \textit{ad-hoc} perturbation. Here, we leverage this result to show that the abstract set $\Theta _{\mu _0}$, which is defined by equation (\ref{teta}), includes Lipschitz functions.

In the following result, we denote by $\mathcal{L}_\theta ^{+}$ the space of positive measures on $\Sigma_A^+ \times K$ for which the Lipschitz constant of their disintegration along $\mathcal{F}^s$, denoted by $|\mu|_\theta$, is finite (see Definitions \ref{Lips3} and \ref{erfcscvdsd}).

\begin{athm}\label{regu}
	Suppose that $F$ satisfies (G1) and (G2). Let $\mu_0$ be the unique $F$-invariant probability measure in $S^{\infty}$. Then $\mu _{0}\in \mathcal{L}_\theta ^{+}$ and 
	\begin{equation*}
		|\mu _{0}|_\theta \leq \dfrac{C_1}{1-\theta},
	\end{equation*}where $C_1>0$ is a constant. 
\end{athm}

The following result is a consequence of all previous theorems. It shows that the system $F$ has exponential decay of correlations ($\lim {C_n(f,h)}=0$ exponentially fast) for Lipschitz observables $f,h$. 

Denote by $\widehat{\mathcal{F}}(\Sigma^+_A\times K)$ the set of real Lipschitz functions, $h:\Sigma^+_A \times K \longrightarrow \mathbb{R}$, with respect to the metric $d_\theta + d$. The Lipschitz norm of $h\in \widehat{\mathcal{F}}(\Sigma^+_A\times K)$ is defined by $||h||_{Lip}=||h||_{\infty}+L_{\Sigma^{+}_A\times K}(h)$.


\begin{athm}\label{skdsdas}
	Suppose that $F$ satisfies (G1) and (G2). For all Lipschitz functions $h, f \in \widehat{\mathcal{F}}(\Sigma^+_A \times K)$, it holds $$\left| \int{(h \circ F^n )  f}d\mu_0 - \int{h  }d\mu_0 \int{f  }d\mu_0 \right| \leq M \xi ^{n}  ||f \mu _0||_{S^{\infty}} ||h||_{Lip}  \ \ \forall n \geq 1,$$where $\xi$ and $M$ are from Theorem \ref{spgap} .
\end{athm}

\section{Preliminares \label{sec:spaces}}



\subsection*{Rokhlin's Disintegration Theorem} Now we give a brief introduction about disintegration of measures.

Consider a probability space $(\Sigma,\mathcal{B}, \mu)$ and a partition $\Gamma$ of $\Sigma$ by measurable 
sets $\gamma \in \mathcal{B}$. Denote by $\pi : \Sigma \longrightarrow \Gamma$ the projection that associates
to each point $u \in \Sigma$ the element $\gamma _u$ of $\Gamma$ which contains $u$, i.e. $\pi(u) = \gamma _u$. Let $\widehat{\mathcal{B}}$ 
be the $\sigma$-algebra of $\Gamma$ provided by $\pi$. Precisely, a subset $\mathcal{Q} \subset \Gamma$ is measurable if, and only if, $ \pi^{-1}(\mathcal{Q}) \in \mathcal{B}$. We define the \textit{quotient} measure $\hat{\mu}$ on $\Gamma$ by $\hat{\mu}(\mathcal{Q})= \mu(\pi ^{-1}(\mathcal{Q}))$.

The proof of the following theorem can be found in \cite{Kva}, Theorem 5.1.11.

\begin{theorem}(Rokhlin's Disintegration Theorem) Suppose that $\Sigma$ is a complete and separable metric space, $\Gamma$ is a measurable partition\footnote{We say that a partition $\Gamma$ is measurable if there exists a full measure set $M_0 \subset \Sigma$ s.t. restricted to $M_0$, $\Gamma = \bigvee _{n=1} ^{\infty} \Gamma_n$, for some increasing sequence $\Gamma _1\prec \Gamma_2 \prec \cdots \prec \Gamma_n \prec \cdots $ of countable partitions of $\Sigma$. Furthermore, $\Gamma_i \prec \Gamma_{i+1} $ means that each element of $\Gamma_{i+1}$ is a subset of some element of $\Gamma_i$. } of $\Sigma$ and $\mu$ is a probability on $\Sigma$. Then, $\mu$ admits a disintegration relatively to $\Gamma$, i.e. a family $\{\mu _\gamma\}_{\gamma \in \Gamma}$ of probabilities on $\Sigma$ and a quotient measure $\hat{\mu}= \pi ^* \mu$ such that:
	
	\begin{enumerate}
		\item [(a)] $\mu _\gamma (\gamma)=1$ for $\hat{\mu} $-a.e. $\gamma \in \Gamma$;
		\item [(b)] for all measurable sets $E \subset \Sigma$, the function $\Gamma \longrightarrow \mathbb{R}$, defined by $\gamma \longmapsto  \mu _\gamma (E)$ is measurable; 
		\item [(c)] for all measurable sets $E \subset \Sigma$, it holds $\mu (E) = \int {\mu _\gamma(E)}d\hat{\mu}(\gamma)$.
	\end{enumerate}
	\label{rok}
\end{theorem}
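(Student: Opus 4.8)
The plan is to give the classical martingale proof, using that $\Sigma$ is Polish and that $\Gamma$ is measurable, so that on a full-measure set $M_0$ one has $\Gamma=\bigvee_{n\ge 1}\Gamma_n$ for an increasing sequence $\Gamma_1\prec\Gamma_2\prec\cdots$ of \emph{countable} partitions. Let $\mathcal{B}_n\subset\mathcal{B}$ be the $\sigma$-algebra whose atoms are the elements of $\Gamma_n$; these form a filtration with $\bigvee_n\mathcal{B}_n=\pi^{-1}(\widehat{\mathcal{B}})$ modulo $\mu$-null sets. First I would fix a countable algebra $\mathcal{A}=\{A_k\}$ of Borel sets generating $\mathcal{B}$ (possible since $\Sigma$ is separable).

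For each $A_k$ and each $n$, on an atom $P\in\Gamma_n$ with $\mu(P)>0$ set $\mu^n_P(A_k)=\mu(A_k\cap P)/\mu(P)$, and view $x\mapsto\mu^n_{\Gamma_n(x)}(A_k)$ as a $\mathcal{B}_n$-measurable function on $\Sigma$; it equals the conditional expectation $E[\mathbf 1_{A_k}\mid\mathcal{B}_n]$. For fixed $k$ this is a bounded martingale along $(\mathcal{B}_n)_n$, so by Doob's theorem it converges $\mu$-a.e.\ and in $L^1(\mu)$ to $g_k=E[\mathbf 1_{A_k}\mid\pi^{-1}(\widehat{\mathcal{B}})]$; being $\pi^{-1}(\widehat{\mathcal{B}})$-measurable, $g_k=\hat g_k\circ\pi$ for some $\widehat{\mathcal{B}}$-measurable $\hat g_k\colon\Gamma\to[0,1]$, and one sets provisionally $\mu_\gamma(A_k):=\hat g_k(\gamma)$.

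The step I expect to be the main obstacle is promoting the a.e.-defined, finitely additive set function $A_k\mapsto\mu_\gamma(A_k)$ to a genuine Borel probability on $\Sigma$ for $\hat\mu$-a.e.\ $\gamma$. Normalization $\mu_\gamma(\Sigma)=1$ and finite additivity hold off one $\hat\mu$-null set (a countable union of the exceptional sets attached to the algebra relations among the $A_k$). Countable additivity does \emph{not} follow formally and is exactly where the Polish hypothesis enters: the clean way is to pass to a standard model --- $(\Sigma,\mathcal{B},\mu)$ is isomorphic mod $0$ to a standard probability space --- where the martingale limits automatically define a Radon probability via inner regularity on compacts, and then transport the disintegration back along the isomorphism; this is the content of Theorem~5.1.11 of \cite{Kva}. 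Granting this, item (b) holds for $A_k\in\mathcal{A}$ since each $\hat g_k$ is $\widehat{\mathcal{B}}$-measurable, and extends to all $E\in\mathcal{B}$ by a monotone class/Dynkin argument; item (c) follows from $\int\mu_\gamma(E)\,d\hat\mu(\gamma)=\int g_E\,d\mu=\int\mathbf 1_E\,d\mu=\mu(E)$, where $g_E=E[\mathbf 1_E\mid\pi^{-1}(\widehat{\mathcal{B}})]$ is the $L^1$-limit of the corresponding martingale.

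It remains to prove (a), $\mu_\gamma(\gamma)=1$ for $\hat\mu$-a.e.\ $\gamma$. Since each $\Gamma_n$ is countable, for every atom $P\in\Gamma_n$ the a.e.\ convergence gives $\mu_\gamma(P)=\lim_m\mu^m_{\Gamma_m(x)}(P)$ for $\mu$-a.e.\ $x$ with $\pi(x)=\gamma$; for $m\ge n$ and $x\in P$ one has $\Gamma_m(x)\subset P$, hence $\mu^m_{\Gamma_m(x)}(P)=1$, so $\mu_\gamma(\Gamma_n^\gamma)=1$ for $\hat\mu$-a.e.\ $\gamma$, where $\Gamma_n^\gamma$ is the atom of $\Gamma_n$ containing $\gamma$ (well defined since $\Gamma$ refines $\Gamma_n$). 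Taking a countable intersection over $n$ of these full-measure sets and using continuity from above of the measure $\mu_\gamma$ together with $\gamma=\bigcap_n\Gamma_n^\gamma$ on $M_0$ yields $\mu_\gamma(\gamma)=\lim_n\mu_\gamma(\Gamma_n^\gamma)=1$, which finishes the proof.
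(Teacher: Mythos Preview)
The paper does not prove this theorem at all; it is stated as a preliminary and the proof is simply deferred to Theorem~5.1.11 of \cite{Kva} (Oliveira--Viana). Your proposal therefore goes well beyond what the paper does: you give a correct outline of the classical martingale proof of Rokhlin's theorem, and you correctly identify countable additivity of the limiting set function as the one step where the Polish hypothesis is genuinely needed (and you defer precisely that step to the same reference the paper cites). One small point worth making explicit in your argument for (a): you use martingale convergence for the indicator of each atom $P\in\Gamma_n$, so you need these atoms to lie in the countable algebra $\mathcal{A}$ on which convergence was established; since $\bigcup_n\Gamma_n$ is countable, you can simply enlarge $\mathcal{A}$ at the outset to contain all of them, and then your sketch goes through as written.
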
The proof of the following lemma can be found in  \cite{Kva}, Proposition 5.1.7.
\begin{lemma}
	Suppose the $\sigma$-algebra $\mathcal{B}$, of $\Sigma$, has a countable generator. If $(\{ \mu_\gamma \}_{\gamma \in \Gamma},  \hat{\mu}  )$ and $(\{ \mu'_{\gamma } \}_{\gamma \in \Gamma}, \hat{\mu} )$ are disintegrations of the measure $\mu$ relatively to $\Gamma$, then $\mu_\gamma = \mu'_\gamma$, $\hat{\mu}$-almost every $\gamma \in \Gamma$. 
	\label{kv}
\end{lemma}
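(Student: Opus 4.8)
The plan is to convert the asserted $\hat\mu$-a.e.\ equality of the two families $\{\mu_\gamma\}_{\gamma\in\Gamma}$ and $\{\mu'_\gamma\}_{\gamma\in\Gamma}$ into the equality of \emph{countably many} $\hat\mu$-integrable functions on $\Gamma$, and then to upgrade that, fibre by fibre, to equality of probability measures via the uniqueness of extension from a generating algebra.

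First I would fix a countable generator $\mathcal{E}$ of $\mathcal{B}$ and replace it by the Boolean algebra $\mathcal{A}$ it generates. Since the algebra generated by a countable family is again countable, $\mathcal{A}$ is a countable $\pi$-system with $\sigma(\mathcal{A})=\mathcal{B}$ and $\Sigma\in\mathcal{A}$. It therefore suffices to produce a set $\Gamma_0\subset\Gamma$ with $\hat\mu(\Gamma_0)=1$ such that $\mu_\gamma(A)=\mu'_\gamma(A)$ for every $A\in\mathcal{A}$ and every $\gamma\in\Gamma_0$: for such a $\gamma$ the two probability measures $\mu_\gamma$ and $\mu'_\gamma$ agree on a generating algebra, hence coincide on $\mathcal{B}$ by the Carath\'eodory/Dynkin uniqueness theorem.

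Next, fix $A\in\mathcal{A}$. For an arbitrary measurable $\mathcal{Q}\subset\Gamma$ (i.e.\ $\pi^{-1}(\mathcal{Q})\in\mathcal{B}$), apply property (c) of Theorem \ref{rok} to the set $A\cap\pi^{-1}(\mathcal{Q})$. Because $\pi^{-1}(\mathcal{Q})$ is a union of atoms of $\Gamma$ and $\mu_\gamma(\gamma)=1$ for $\hat\mu$-a.e.\ $\gamma$ by property (a), one checks directly that $\mu_\gamma\!\left(A\cap\pi^{-1}(\mathcal{Q})\right)=\mathbf{1}_{\mathcal{Q}}(\gamma)\,\mu_\gamma(A)$ off a single $\hat\mu$-null set, independent of $A$ and $\mathcal{Q}$ (if $\gamma\in\mathcal{Q}$ then $\gamma\subset\pi^{-1}(\mathcal{Q})$, so $A\setminus\pi^{-1}(\mathcal{Q})$ is $\mu_\gamma$-null; if $\gamma\notin\mathcal{Q}$ then $\gamma\cap\pi^{-1}(\mathcal{Q})=\emptyset$, so $A\cap\pi^{-1}(\mathcal{Q})$ is $\mu_\gamma$-null). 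Hence
$$\int_{\mathcal{Q}}\mu_\gamma(A)\,d\hat\mu(\gamma)\;=\;\mu\!\left(A\cap\pi^{-1}(\mathcal{Q})\right)\;=\;\int_{\mathcal{Q}}\mu'_\gamma(A)\,d\hat\mu(\gamma).$$
The two integrands are measurable by property (b) and bounded by $1$; since their $\hat\mu$-integrals over every measurable $\mathcal{Q}$ coincide, they are $\hat\mu$-a.e.\ equal. Let $\Gamma_A$ denote the full-measure set on which $\mu_\gamma(A)=\mu'_\gamma(A)$.

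Finally, set $\Gamma_0:=\bigcap_{A\in\mathcal{A}}\Gamma_A$. Countability of $\mathcal{A}$ gives $\hat\mu(\Gamma_0)=1$, and by the reduction above $\mu_\gamma=\mu'_\gamma$ for every $\gamma\in\Gamma_0$, which is the claim. The only point requiring care — and the one I would single out as the main (if modest) obstacle — is the localization identity $\mu_\gamma\!\left(A\cap\pi^{-1}(\mathcal{Q})\right)=\mathbf{1}_{\mathcal{Q}}(\gamma)\,\mu_\gamma(A)$: it is where the concentration property (a) and the fact that $\pi^{-1}(\mathcal{Q})$ is saturated for the partition are both used, and one must check that the exceptional null set stems only from (a), hence does not depend on $A$ or $\mathcal{Q}$, so that a single $\Gamma_0$ works for all of $\mathcal{A}$. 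Everything else is routine measure theory.
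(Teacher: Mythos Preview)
Your proof is correct and follows essentially the same route as the paper. The paper does not prove Lemma~\ref{kv} directly (it cites \cite{Kva}), but it proves an equivalent statement in Appendix~3 as Proposition~\ref{againa}: there the argument also passes to the countable algebra $\mathcal{A}$ generated by the countable generator, and for each $A\in\mathcal{A}$ shows that the two conditional measures agree $m_1$-a.e.\ by integrating over $\pi_1^{-1}(G_A)$ where $G_A=\{\gamma:\phi_x(\gamma)\mu_\gamma(A)<\phi_x(\gamma)\mu'_\gamma(A)\}$ (and similarly $R_A$), then takes the countable union of the null sets $G_A\cup R_A$. This is exactly your argument with a specific choice of the test set $\mathcal{Q}$ (namely $\mathcal{Q}=G_A$ or $R_A$), so the two proofs coincide in substance; your version is phrased slightly more abstractly via the ``equal integrals over all $\mathcal{Q}$ implies a.e.\ equality'' principle, while the paper names the bad sets explicitly.
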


\subsection{The $\mathcal{L}^{\infty}$ and $S^{\infty}$ spaces}

Fix $\Sigma=\Sigma_A^+\times K$ and let $\mathcal{SB}(\Sigma )$ be the space of Borel signed measures on $\Sigma$. Given $\mu \in \mathcal{SB}(\Sigma )$ denote by $\mu ^{+}$ and $\mu ^{-}$
the positive and the negative parts of its Jordan decomposition, $\mu =\mu ^{+}-\mu ^{-}$ (see Remark {\ref{ghtyhh}). Let $\mathcal{AB}$ be the set \begin{equation}
		\mathcal{AB}=\{\mu \in \mathcal{SB}(\Sigma ):\pi_1^{\ast }\mu ^{+}\ll m\
		\ \mathnormal{and}\ \ \pi_1^{\ast }\mu ^{-}\ll m\},  \label{thespace1}
	\end{equation}
	where $m$ is the Markov measure and $\pi_1$ is defined by $\pi_1(\underline{x},y)=\underline{x}$, where $\underline{x} \in \Sigma^+_A$ and $y \in K$.
	
	Given a {\em probability measure} $\mu \in \mathcal{AB}$, we briefly outline how Theorem \ref{rok} is applied in our context. For $u=(\underline{x},y) \in \Sigma$, consider $\gamma_u=\{\underline{x}\}\times K$. In particular, if $\pi_1(u)=\pi_1(\tilde{u})$, then $\gamma_{u}=\gamma_{\tilde{u}}$. Let $\Gamma=\{\gamma_u:=\{\underline{x}\}\times K; \underline{x}\in \Sigma^+_A\}$.  With this setup, the map $\pi(u)=\gamma_u$ serves as a quotient map, $\pi:\Sigma\rightarrow \Gamma.$ Observe that $\Gamma$ is naturally identified with $\mathcal{F}^s=\{\gamma_{\underline{x}}=\{\underline{x}\}\times K; \underline{x}\in \Sigma^+_A\}$ and $\pi$ is naturally identified with the projection $\pi_1$. Moreover, since $\dfrac{d\hat{\mu}}{dm}=\phi_1$, it follows that $\hat{\mu}=\phi_1 m$. Consequently, we can naturally identify $\hat{\mu}$ with the absolutely continuous measure $\phi_1 m$.

	Below we write $\pi _{2,\gamma}(\underline{x},y)=y$ if $\gamma=\gamma_{\underline{x}}.$ Using $\pi_{2,\gamma}$ we define a family of signed measures on $K$.
	
	\begin{definition} Given a positive measure $\mu \in \mathcal{AB}$ and its disintegration along the stable leaves $\mathcal{F}^{s}$, $\left( \{\mu _{\gamma} \}_{\gamma},\hat{\mu}=\phi _1m \right) $ we define the \textbf{restriction of $\mu $ on $\gamma $} as the positive measure $\mu |_{\gamma }$ on $K$ (not on the leaf $\gamma $) defined for all measurable set $B\subset K$ by 
		\begin{equation*}
			\mu |_{\gamma}(B):=\pi _{2, \gamma}^{\ast }(\phi_1(\underline{x})\mu _{\gamma}
			)(B), \ \mbox{where} \ \gamma=\gamma_{\underline{x}}.\label{restrictionmeasure}
		\end{equation*}
		
	\end{definition}
}

\begin{lemma}
	If $\mu \in \mathcal {AB}$, then for each measurable set $B \subset K $, the function $\tilde{c}: \Sigma _A ^+ \longrightarrow \mathbb{R}$, given by $\tilde{c}(\underline{x}) = \mu |_\gamma (B)$, is measurable, where $\gamma = \gamma _ {\underline{x}}$.
\end{lemma}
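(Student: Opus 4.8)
The claim is a measurability statement: for $\mu \in \mathcal{AB}$ and a fixed measurable $E \subset K$, the map $\gamma \mapsto \mu|_\gamma(E)$ is measurable on $\Sigma_A^+$. My plan is to unwind the definition of $\mu|_\gamma$ and reduce everything to the measurability clause (b) of Rokhlin's Disintegration Theorem (Theorem \ref{rok}) together with measurability of the marginal density $\phi_1$. First I would recall that, by Definition \ref{restrictionmeasure},
\[
\mu|_\gamma(E) = \pi_{2,\gamma}^{\ast}\big(\phi_1(\gamma)\mu_\gamma\big)(E) = \phi_1(\gamma)\,\mu_\gamma\big(\pi_{2,\gamma}^{-1}(E)\big) = \phi_1(\gamma)\,\mu_\gamma\big(\gamma \cap \pi_2^{-1}(E)\big).
\]
So $\tilde c(\gamma) = \phi_1(\gamma)\cdot \mu_\gamma(\gamma \cap \widetilde E)$, where $\widetilde E := \pi_2^{-1}(E) \subset \Sigma$ is a measurable subset of the product. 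This writes $\tilde c$ as a product of two functions of $\gamma$.

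The next step is to check each factor is measurable. The function $\gamma \mapsto \phi_1(\gamma)$ is measurable by construction, since $\phi_1$ is (a density representing) the quotient measure $\hat\mu$, which lives on the $\sigma$-algebra $\widehat{\mathcal{B}}$ of $\Gamma = \mathcal{F}^s \cong \Sigma_A^+$; for $\mu \in \mathcal{AB}$ this is exactly the statement that $\hat\mu \ll m$ with Radon–Nikodym derivative $\phi_1$. For the second factor, I want to apply Theorem \ref{rok}(b) with the measurable set $\widetilde E$: it says $\gamma \mapsto \mu_\gamma(\widetilde E)$ is measurable. The only subtlety is that the claim involves $\mu_\gamma(\gamma \cap \widetilde E)$ rather than $\mu_\gamma(\widetilde E)$; but by Theorem \ref{rok}(a), $\mu_\gamma(\gamma) = 1$ for $\hat\mu$-a.e.\ $\gamma$, hence $\mu_\gamma(\gamma \cap \widetilde E) = \mu_\gamma(\widetilde E)$ for $\hat\mu$-a.e.\ $\gamma$, and modifying $\tilde c$ on a $\hat\mu$-null set (equivalently an $m$-null set) does not affect measurability. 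Finally, a product of two measurable real-valued functions is measurable, so $\tilde c$ is measurable, which is the assertion.

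One cosmetic point to handle: the statement is phrased for a \emph{positive} measure $\mu \in \mathcal{AB}$ (the disintegration in Definition \ref{restrictionmeasure} is defined by disintegrating the normalization), so I would either restrict to that case or remark that for a general signed $\mu \in \mathcal{AB}$ one applies the argument to $\mu^+$ and $\mu^-$ separately and subtracts. The main (and essentially only) obstacle is bookkeeping: making sure the identification of $\Gamma$ with $\Sigma_A^+$ via $\pi_1$, the identification of $\hat\mu$ with $\phi_1 m$, and the "a.e.\ versus everywhere" issue with $\mu_\gamma(\gamma)=1$ are all handled cleanly so that Theorem \ref{rok}(b) can be invoked verbatim. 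There is no hard analysis here — it is a routine consequence of Rokhlin's theorem once the definitions are expanded.
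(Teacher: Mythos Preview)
Your proposal is correct and follows essentially the same route as the paper: unwind Definition~\ref{restrictionmeasure} to write $\tilde c(\gamma)=\phi_1(\gamma)\,\mu_\gamma(\pi_2^{-1}(E)\cap\gamma)=\phi_1(\gamma)\,\mu_\gamma(\pi_2^{-1}(E))$ (using that $\mu_\gamma$ is supported on $\gamma$), then invoke the measurability of $\phi_1$ and Theorem~\ref{rok}(b) for the set $\pi_2^{-1}(E)\subset\Sigma$. Your treatment of the ``a.e.\ versus everywhere'' point is slightly more explicit than the paper's, but the argument is the same.
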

\begin{proof}For a given $\mu \in \mathcal {AB}$, by Theorem \ref{rok}, we have that the function $\gamma \mapsto \mu _\gamma (B)$ is measurable. Moreover, $\phi _1$ is measurable. Since, $$\tilde{c}(\underline{x}) = \mu |_\gamma (B) 
	= \phi _1(\underline{x}) \mu_\gamma (\pi_{2,\gamma}^{-1}(B) )
	=\phi _1(\underline{x}) \mu_\gamma (\pi_{2}^{-1}(B) \cap \gamma)
	=\phi _1(\underline{x}) \mu_\gamma (\pi_{2}^{-1}(B))$$ we have $\tilde{c}$ is a measurable function.	 
	
\end{proof}

For a given signed measure $\mu \in \mathcal{AB}$ and its Jordan decomposition $\mu
=\mu ^{+}-\mu ^{-}$, define the \textbf{restriction of $\mu $ on $\gamma $}
by%
\begin{equation}
	\mu |_{\gamma }:=\mu ^{+}|_{\gamma }-\mu ^{-}|_{\gamma }.
\end{equation}%

\begin{remark}\label{ghtyhh}
	As we prove in Proposition \ref{lasttttt}, the restriction $\mu |_\gamma$ does not depend on the decomposition. Precisely, if $\mu = \mu_1 - \mu_2$, where $\mu_1$ and $\mu _2$ are any positive measures, then $\mu|_\gamma := \mu_1|_\gamma - \mu_2|_\gamma=\mu ^{+}|_{\gamma }-\mu ^{-}|_{\gamma }$ $m$-a.e. $\underline{x} \in \Sigma^+_A$, where $\gamma = \gamma_{\underline{x}}$. 
\end{remark}

\begin{definition}
	Given two signed measures $\mu $ and $\nu $ on $X,$ we define a \textbf{%
		Wasserstein-Kantorovich Like} distance between $\mu $ and $\nu $ by%
	\begin{equation}
		W_{1}^{0}(\mu ,\nu )=\sup_{L_X(g)\leq 1,||g||_{\infty }\leq 1}\left\vert \int {%
			g}d\mu -\int {g}d\nu \right\vert .
	\end{equation}
	\label{wasserstein}
\end{definition}From now on, we denote%
\begin{equation}\label{WW}
	||\mu ||_{W}:=W_{1}^{0}(0,\mu ).
\end{equation}%
As a matter of fact, $||\cdot||_{W}$ defines a norm on the vector space of
signed measures defined on a compact metric space.
We remark that this norm is equivalent to the dual of the Lipschitz norm. 

Other applications of this metric to obtain limit theorems can be seen in \cite{GP}, \cite{RRR} and \cite {ben}. For instance, in \cite {ben} the author apply this metric to a more general case of shrinking fibers systems.  	

\begin{remark}
	Henceforth, we denote $\vert \vert \mu |_\gamma \vert \vert _W : = W_{1}^{0}(\mu ^+|_\gamma ,\mu ^-|_\gamma )$.
\end{remark}






\begin{definition}\label{L}
	Let $\mathcal{L}^{\infty }\subseteq \mathcal{AB}(\Sigma )$ be defined as%
	\begin{equation}
		\mathcal{L}^{\infty }:=\left\{ \mu \in \mathcal{AB}:\esssup_{\underline{x}} \{||\mu |_{\gamma_{\underline{x}}} ||_W : \underline{x} \in \Sigma _A^+\}<\infty \right\},
	\end{equation}%
	where the essential supremum is taken over $\Sigma^+_A$ with respect to $m$. Define the function $||\cdot ||_{\infty }:\mathcal{L}^{\infty }\longrightarrow \mathbb{R}$
	by%
	\begin{equation}
		||\mu ||_{\infty }=\esssup \{W_{1}^{0}(\mu ^{+}|_{\gamma_{\underline{x}} },\mu ^{-}|_{\gamma_{\underline{x}}
		});\underline{x}\in \Sigma_A^+\}.
	\end{equation}
\end{definition}

Finally, we define the following set of signed measures on $\Sigma $%

\begin{definition}\label{iuytietrsdf}
	\begin{equation}\label{si}
		S^{\infty }=\left\{ \mu \in \mathcal{L}^{\infty };\phi_1\in
		\mathcal{F}_{\theta}(\Sigma^+_A)\right\}, 
	\end{equation}and the function, $||\cdot ||_{S^{\infty }}:S^{\infty }\longrightarrow \mathbb{R}$, defined by%
	\begin{equation}
		||\mu ||_{S^{\infty }}=||\phi_1||_{\theta}+||\mu ||_{\infty },
	\end{equation}where $\phi_1=\dfrac{d(\pi_1^*\mu)}{dm}$.
\end{definition}

It is straightforward to show that $\left( \mathcal{L}^{\infty },||\cdot ||_{\infty }\right) $ and $\left(
S^{\infty },||\cdot||_{S^{\infty }}\right) $ are normed vector spaces, see for instance \cite{L}.
Consider $\mathcal{SB}(K)$ with the Borel's sigma algebra generating with the Wasserstein-Kantorovich Like metric. We have that the map $\tilde{c}:\Sigma_A^+\rightarrow  \mathcal{SB}(K), \underline{x}\mapsto \mu|_{\gamma_{\underline{x}}}$ is a measurable function.	In fact, note that by Theorem \ref{rok} the map $\underline{x} \mapsto \mu_{\gamma_{\underline{x}}}$ from $\Sigma_A^+$ to $\mathcal{SB}(\Sigma)$, $\phi_1: \Sigma_A^+ \longrightarrow \mathbb{R}$ and $\pi^*_{2,\gamma}:\mathcal{SB}(\Sigma)\rightarrow \mathcal{SB}(K)$ are measurable functions.


\section{Transfer operator associated to $F$}\label{eryet}

Henceforth, we consider the transfer operator $\func{F}^{\ast }$ associated with 
$F$, that is, $\func{F}^{\ast }$ is given by
\begin{equation*}
	\lbrack \func{F}^{\ast }\mu ](E)=\mu (F^{-1}(E)),
\end{equation*}
for each signed measure $\mu \in \mathcal{SB}(\Sigma) $ and for each measurable set $E\subset \Sigma $.

For the proof of the following lemma, we define $P_i:=[0;i]=\{\underline{x}=(x_m)_{m\in \mathbb{N}}; x_0=i\}$ and denote $\sigma_i:= \sigma |_{P_i}$ for all $i \in \{1, \cdots, N\}$. As above, if $\mu \in \mathcal{AB}$, we write $(\{\mu _{\gamma }\}_\gamma,\phi_1)$ to describe its disintegration along $\mathcal{F}^s$, where $\phi_1=\dfrac{d(\pi_1^*\mu)}{dm}$.
\begin{lemma}
	\label{transformula}If  $\mu \in \mathcal{AB}$ is a probability measure and $\phi_1:=\dfrac{d(\pi_1^*\mu)}{dm}$, then $F^*\mu\in \mathcal{AB}$ and
	\begin{equation}
		\dfrac{d\pi_1^*({F}^{\ast }\mu)}{dm}=\func{P}_{\sigma}(\phi _1). \label{1}
	\end{equation}
	Moreover, if for $\underline{x}=(x_n)_{n\in \mathbb{Z}^+}\in \Sigma^{+}_A$ we denote $\gamma=\gamma_{\underline{x}}$ and $\gamma _i := \gamma _{\sigma _i ^{-1}(\underline{x})}$, it holds
	\begin{equation}
		(\func{F}^{\ast }\mu )_{\gamma }=\nu_{\gamma}:=\frac{1}{\func{P}_{\sigma}(\phi _1)(\underline{x})}
		\sum_{i=1}^N{\frac{\phi _1}{J_{m,\sigma_i}}\circ \sigma_{i}^{-1}(\underline{x} )\cdot
			\chi _{\sigma_{i}(P_{i})}(\underline{x} )\cdot \func{F}^{\ast }\mu _{\gamma_i}
		}  \label{2}
	\end{equation}
	when $\func{P}_{\sigma}(\phi _1)(\underline{x})\neq 0$. Otherwise, if $\func{P}
	_{\sigma}(\phi _1)(\underline{x})=0$, then $\nu _{\gamma }$ is the Lebesgue measure on $\gamma$, where the sum is understood to be zero if the transition $ix_0$ is not allowed, that is, $A_{i,x_0}=0$, here $\underline{x}=(x_{\ell})_{\ell\ge 0}$. Moreover, throughout the article $\chi _{E}$ denotes the characteristic function of the set $E$.
\end{lemma}

\begin{proof}
	
	By the uniqueness of the disintegration (see Lemma \ref{kv}) to prove Lemma \ref{transformula}, is enough to prove the following equation 
	\begin{equation}
		\func{F}^{\ast }\mu (E) = \int_{\Sigma^+_A}{\nu_{\gamma }(E\cap \gamma )} \func{P}_{\sigma}(\phi _1)(\underline{x} ) dm(\underline{x}), 
	\end{equation}for a measurable set $E \subset \Sigma $.  
	In order to do that, let us define the sets $B_1 =\left\lbrace \underline{x} \in \Sigma _A^+; \func{P}_{\sigma} (\phi_1) (\underline{x}) =0 \right\rbrace$ and $B_2 = B_1 ^{c}$. It follows from the definition of $P_{\sigma}$ that 
	
	\begin{equation}\label{ergdg}
		\phi _1 m(\sigma^{-1}(B_1))=0. 
	\end{equation}

	Below we write $S:=\int_{\Sigma^+_A}{\nu_{\gamma }(E\cap \gamma )} \func{P}_{\sigma}(\phi _1)(\underline{x} ) dm(\underline{x})$. Using the change of variables $\underline{x} = \sigma_i(\underline{z}_i)$ ($\underline{z}_i=i\underline{x} \in P_i$), equation (\ref{ergdg}), and the definition of $\nu _\gamma$  (see equation (\ref{2})), we have 
	\begin{eqnarray*}
		S&=&\int_{B_{2}}{\sum_{i=1}^N{\ {\frac{\phi _1}{J_{m,\sigma_i}%
					}\circ \sigma_{i}^{-1}(\underline{x} )\func{F}^{\ast }\mu _{\gamma _i}(E)\chi_{\sigma_{i}(P_{i})}(\underline{x})}}}%
		dm(\underline{x} )
		\\
		&=&\sum_{i=1}^N{\int_{\sigma_{i}(P_{i})\cap B_{2}}{\ {\frac{\phi _1}{J_{m,\sigma_i}%
					}\circ \sigma_{i}^{-1}(\underline{x})\func{F}^{\ast }\mu _{\gamma_i}(E)}}}%
		dm(\underline{x} ) \\
		&=&\sum_{i=1}^N{\int_{P_{i}\cap \sigma_{i}^{-1}(B_{2})}{\ {\phi _1(\underline{z}_i)\mu
					_{\gamma _ {\underline{z}_i}}(F^{-1}(E))}}}dm(\underline{z}_i) \\
		&=&{\int_{\sigma^{-1}(B_{2})}{\ {\phi _1(\underline{z}_i)\mu
					_{\gamma _ {\underline{z}_i}}(F^{-1}(E))}}}dm(\underline{z}_i)\\
		&=&\int_{\bigcup_{s=1}^{2}{\sigma^{-1}(B_{s})}}{\ {\ \mu _{\gamma _ {\underline{z}_i}}(F^{-1}(E))}}%
		d(\phi_1m)(\underline{z}_i) \\
		&=&\int_{\Sigma^+_A}{\ {\ \mu _{\gamma _ {\underline{z}_i}}(F^{-1}(E))}}{d(\phi _1m)}(\underline{z}_i) \\
		&=&\mu (F^{-1}(E)) \\
		&=&\func{F}^{\ast }\mu (E).
	\end{eqnarray*}And the proof is done.
\end{proof}

As said in Remark \ref{ghtyhh}, Proposition \ref{lasttttt} yields that the restriction $\mu|_{\gamma}$
does not depend on the decomposition. Thus, for each  $\mu \in \mathcal{L}^{\infty}$, since $\func{F}^*\mu$ can be decomposed as $\func{F}^*\mu = \func{F}^*(\mu^{+}) -\func{F}^*(\mu^-)$, we can apply the above Lemma to $\func{F}^*(\mu^+)$ and $\func{F}^*(\mu^-)$ to get the following

\begin{proposition}
	Let  $\gamma \in \mathcal{F}^{s}$ be a stable leaf. 
	Define the map $F_{\gamma}:K\longrightarrow K$ by $$F_{\gamma }=\pi _{2}\circ F|_{\gamma
	}\circ \pi _{2,\gamma}^{-1}.$$
	Then, for each $\mu \in \mathcal{L}^{\infty}$ and for $m$-almost every $\underline{x} \in \Sigma_A^+$ it holds 
	\begin{equation}
		(\func{F}^{\ast }\mu )|_{\gamma }=\sum_{i=1}^N{\dfrac{\func{F}%
				_{\gamma_i}^{\ast }\mu |_{\gamma_i }}{J_{m,\sigma_i}(i\underline{x} )}\chi _{\sigma_i(P _{i})}(\underline{x})},  \label{niceformulaa}
	\end{equation}where $\gamma=\gamma_{\underline{x}}, \ \ \underline{x}=(x_n)_{n\in \mathbb{Z}^+}$, $\gamma _i := \gamma _{\sigma _i ^{-1}(\underline{x})}$ and $i\underline{x} = \sigma _i ^{-1}(\underline{x})$, where the sum is understood to be zero if the transition $(ix_0)$ is not allowed.
	\label{niceformulaab}
\end{proposition}
\section{Basic properties of the norms and convergence to equilibrium}\label{yfjdhf}

In this section, we show important properties of the norms and their behaviour with respect to the transfer operator.
In particular, we show that the  $\mathcal{L}^{\infty}$ norm is weakly contracted by the transfer operator.
We prove a type of Lasota-Yorke inequality and an exponential convergence to equilibrium statement. All these properties will be used in next section to prove a spectral gap statement for the transfer operator.

\begin{proposition}[The weak norm is weakly contracted by $\func{F}^{\ast }$]\label{l1}
	
	Suppose that $F$ satisfies (G1). If $\mu \in \mathcal{L}^{\infty}$ then 
	\begin{equation}
		||\func{F}^{\ast }\mu ||_{\infty}\leq ||\mu ||_{\infty}.
	\end{equation}%
	\label{weakcontral11234}
\end{proposition}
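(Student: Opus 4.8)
The plan is to reduce the statement to the leaf-wise Perron--Frobenius formula of Proposition~\ref{niceformulaab} together with the dual description of the fiber norm, the key point being that under (G1) composition with the contracting fiber maps does not enlarge the relevant class of test functions.

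First I would record that on a leaf $\gamma=\gamma_{\underline x}$ the fiber map reads $F_\gamma=\pi_2\circ F|_\gamma\circ\pi_{2,\gamma}^{-1}$, i.e.\ $F_\gamma(z)=G(\underline x,z)$, so by (\ref{contracting1}) it is $\alpha$-Lipschitz with $\alpha<1$. Hence, whenever $g\colon K\to\mathbb R$ satisfies $L(g)\le1$ and $\|g\|_\infty\le1$, the composition $g\circ F_\gamma$ still satisfies $L(g\circ F_\gamma)\le\alpha\le1$ and $\|g\circ F_\gamma\|_\infty\le1$, so it remains admissible in the supremum defining $W_1^0$; consequently, for any signed measure $\nu$ on $K$,
\[
\Big|\int_K g\,d(\func F_\gamma^{\ast}\nu)\Big|=\Big|\int_K g\circ F_\gamma\,d\nu\Big|\le\|\nu\|_W .
\]

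Next, fix $\mu\in\mathcal L^\infty$. By Remark~\ref{ghtyhh} the restriction of a signed measure to a leaf does not depend on the chosen decomposition, so Proposition~\ref{niceformulaab} applies and gives, for $m$-a.e.\ $\gamma$,
\[
(\func F^{\ast}\mu)|_\gamma=\sum_{i=1}^N\frac{\func F_{\sigma_i^{-1}(\gamma)}^{\ast}\mu|_{\sigma_i^{-1}(\gamma)}}{J_{m,\sigma_i}\circ\sigma_i^{-1}(\gamma)}\,\chi_{\sigma_i(P_i)}(\gamma).
\]
Pairing with an admissible $g$, using the triangle inequality, the estimate above with $\nu=\mu|_{\sigma_i^{-1}(\gamma)}$, and the defining bound $\|\mu|_\beta\|_W\le\|\mu\|_\infty$ for $\beta=\sigma_i^{-1}(\gamma)$ (which holds for $m$-a.e.\ $\gamma$ because each branch $\sigma_i$ is nonsingular, as is implicit in $\func P_\sigma$ acting on $L^1(m)$), I would bound
\[
\Big|\int_K g\,d\big((\func F^{\ast}\mu)|_\gamma\big)\Big|\le\|\mu\|_\infty\sum_{i=1}^N\frac{\chi_{\sigma_i(P_i)}(\gamma)}{J_{m,\sigma_i}\circ\sigma_i^{-1}(\gamma)}=\|\mu\|_\infty\,\func P_\sigma(\mathbf 1)(\gamma).
\]
Since the Markov measure $m$ is $\sigma$-invariant, $\func P_\sigma(\mathbf 1)=\mathbf 1$ $m$-a.e.; taking the supremum over admissible $g$ and then the essential supremum over $\gamma$ gives $\|\func F^{\ast}\mu\|_\infty\le\|\mu\|_\infty$ (in particular $\func F^{\ast}\mu\in\mathcal L^\infty$).

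There is no real analytic difficulty here; the only points requiring care are the almost-everywhere bookkeeping --- both Proposition~\ref{niceformulaab} and $\|\mu|_\beta\|_W\le\|\mu\|_\infty$ hold only off an $m$-null set, and one must discard the union of the pullbacks of that set under the finitely many branches $\sigma_i$ --- together with the identity $\func P_\sigma(\mathbf 1)=\mathbf 1$, which is where invariance of $m$ is used. The one genuinely structural ingredient is the contraction of the fibers (G1): it is exactly what keeps $g\circ F_\gamma$ inside the unit ball $\{L(g)\le1,\ \|g\|_\infty\le1\}$, so that no constant worse than the one already carried by $\func P_\sigma$ on the base appears, and the weak norm comes out (weakly) contracted.
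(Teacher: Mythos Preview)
Your proof is correct and follows essentially the same route as the paper: the paper isolates the fiber estimate $\|\func F_\gamma^{\ast}\nu\|_W\le\|\nu\|_W$ as a separate lemma (Lemma~\ref{niceformulaac}) and then feeds it into the leafwise formula of Proposition~\ref{niceformulaab}, finishing with the $L^\infty$-contraction of $\func P_\sigma$ applied to the function $c(\gamma)=\|\mu|_\gamma\|_W$. The only cosmetic difference is that the paper writes the last step as $\|\func P_\sigma(c)\|_\infty\le\|c\|_\infty$, whereas you bound $c$ by $\|\mu\|_\infty$ first and then invoke $\func P_\sigma(\mathbf 1)=\mathbf 1$; these are of course the same fact, and your explicit remark on the $m$-null set bookkeeping under the branches is a point the paper leaves implicit.
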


In the proof of the proposition we will use the following lemma about the behavior of the $||\cdot ||_W$ norm (see equation (\ref{WW})) after a contraction. It says that a contraction cannot increase the  $||\cdot ||_W$ norm.

\begin{lemma}\label{niceformulaac}
	For every $\mu \in \mathcal{AB}$ and $m$-almost all stable leaf $\gamma \in \mathcal{F}^s$, it holds
	\begin{equation}
		||\func{F}_{\gamma }^{\ast }(\mu |_{\gamma })||_{W}\leq ||\mu |_{\gamma }||_{W},
	\end{equation} \label{weak1}
	where $F_{\gamma }:K\longrightarrow K$ is defined in Proposition \ref{niceformulaab}.
	Moreover, if $\mu $ is a probability measure on $K$, it holds
	\begin{equation}
		||\mu ||_{W}=1.
		\label{simples}
	\end{equation}
	In particular, since $F^*\mu$ is also a probability we have that $||\func{F}^{\ast }\mu ||_{W}=1$.
\end{lemma}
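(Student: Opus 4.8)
The plan is to reduce the whole statement to a single structural fact: the fibre map $F_{\gamma}$ is an $\alpha$-contraction of $(K,d)$, and therefore precomposing with it sends the class of admissible test functions of Definition \ref{wasserstein} (those $g$ with $L(g)\le 1$ and $||g||_{\infty}\le 1$) into itself.

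First I would make $F_{\gamma}$ explicit. Unwinding $F_{\gamma}=\pi_{2}\circ F|_{\gamma}\circ \pi_{2,\gamma}^{-1}$ on a leaf $\gamma=\gamma_{\underline{x}}$ gives $F_{\gamma}(z)=G(\underline{x},z)$, so hypothesis (G1), i.e. inequality (\ref{contracting1}), yields $d(F_{\gamma}(z_{1}),F_{\gamma}(z_{2}))\le \alpha\, d(z_{1},z_{2})$ for all $z_{1},z_{2}\in K$, with $0<\alpha<1$. Consequently, if $g\colon K\to\mathbb{R}$ satisfies $L(g)\le 1$ and $||g||_{\infty}\le 1$, then $L(g\circ F_{\gamma})\le \alpha\, L(g)\le 1$ and $||g\circ F_{\gamma}||_{\infty}\le ||g||_{\infty}\le 1$, so $g\circ F_{\gamma}$ is again admissible.

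For the inequality $||\func{F}_{\gamma}^{\ast}(\mu|_{\gamma})||_{W}\le ||\mu|_{\gamma}||_{W}$, recall that $||\mu|_{\gamma}||_{W}=W_{1}^{0}(\mu^{+}|_{\gamma},\mu^{-}|_{\gamma})$ and that $W_{1}^{0}(\rho_{1},\rho_{2})$ is the supremum over admissible $g$ of $|\int g\,d\rho_{1}-\int g\,d\rho_{2}|$, which depends only on $\rho_{1}-\rho_{2}$. Using linearity and positivity of pushforward, $\func{F}_{\gamma}^{\ast}(\mu|_{\gamma})=\func{F}_{\gamma}^{\ast}(\mu^{+}|_{\gamma})-\func{F}_{\gamma}^{\ast}(\mu^{-}|_{\gamma})$, together with the change of variables $\int g\,d(\func{F}_{\gamma}^{\ast}\rho)=\int g\circ F_{\gamma}\,d\rho$, one gets for every admissible $g$
\[\left|\int g\,d(\func{F}_{\gamma}^{\ast}(\mu^{+}|_{\gamma}))-\int g\,d(\func{F}_{\gamma}^{\ast}(\mu^{-}|_{\gamma}))\right|=\left|\int g\circ F_{\gamma}\,d(\mu^{+}|_{\gamma})-\int g\circ F_{\gamma}\,d(\mu^{-}|_{\gamma})\right|\le W_{1}^{0}(\mu^{+}|_{\gamma},\mu^{-}|_{\gamma}),\]
because $g\circ F_{\gamma}$ is admissible by the previous step. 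Taking the supremum over admissible $g$ on the left yields the claim. The only piece of bookkeeping to be careful with is that $\func{F}_{\gamma}^{\ast}(\mu^{+}|_{\gamma})-\func{F}_{\gamma}^{\ast}(\mu^{-}|_{\gamma})$ need not be the Jordan decomposition of $\func{F}_{\gamma}^{\ast}(\mu|_{\gamma})$; this causes no trouble since $W_{1}^{0}$ only sees the difference of its two arguments.

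For the second assertion, let $\mu$ be a probability on $K$. Testing with the constant function $g\equiv 1$, which is admissible since its Lipschitz constant is $0$ and its sup-norm is $1$, gives $||\mu||_{W}\ge \int 1\,d\mu=1$; conversely $|\int g\,d\mu|\le ||g||_{\infty}\,\mu(K)\le 1$ for every admissible $g$, so $||\mu||_{W}=1$. Applying this to the probability measure $\func{F}_{\gamma}^{\ast}\mu$ (equivalently, to $F^{\ast}\mu$, which is again a probability) gives the final sentence. I do not foresee a genuine obstacle: the mathematical content lies entirely in the contraction property of $F_{\gamma}$, and the rest is a one-line monotonicity estimate for $W_{1}^{0}$.
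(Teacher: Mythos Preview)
Your proof is correct and follows essentially the same approach as the paper: both arguments hinge on the observation that $F_{\gamma}$ is an $\alpha$-contraction, so precomposition with it keeps the test function admissible, and the change of variables $\int g\,d(\func{F}_{\gamma}^{\ast}\rho)=\int g\circ F_{\gamma}\,d\rho$ immediately gives the inequality; the second part (testing with $g\equiv 1$ for the lower bound, using $|\int g\,d\mu|\le\|g\|_\infty$ for the upper bound) is identical to the paper's. Your extra remark that the Jordan decomposition of $\func{F}_{\gamma}^{\ast}(\mu|_{\gamma})$ need not coincide with $\func{F}_{\gamma}^{\ast}(\mu^{+}|_{\gamma})-\func{F}_{\gamma}^{\ast}(\mu^{-}|_{\gamma})$ is a useful clarification that the paper glosses over, but it does not change the argument.
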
 

\begin{proof}(of Lemma \ref{niceformulaac})
	Indeed, since $F_{\gamma }$ is an $\alpha$-contraction, if 
	$|g|_{\infty }\leq 1$ and $Lip(g)\leq 1$ the same holds for $g\circ
	F_{\gamma }$. Since
	\begin{eqnarray*}
		\left\vert \int {g~}d\func{F}_{\gamma }^{\ast }(\mu|_\gamma) \right\vert &=&\left\vert
		\int {g\circ F_{\gamma }~}d\mu |_\gamma \right\vert, \\ 
	\end{eqnarray*}%
	taking the supremum over $|g|_{\infty }\leq 1$ and $Lip(g)\leq 1$ we finish the proof of the inequality.

	In order to prove the equation (\ref{simples}), consider a probability measure $\mu$ on $K$ and a Lipschitz function $g:K \longrightarrow \mathbb{R}$, such that $||g||_\infty \leq 1$ and $L(g)\leq 1$. Therefore, $|\int{g}d\mu|\leq ||g||_\infty \leq 1$, which yields $||\mu||_W \leq 1$. Reciprocally, consider the constant function $ g \equiv 1$. Then $1 = |\int {g}d\mu| \leq ||\mu||_W$. These two facts proves equation (\ref{simples}). 
\end{proof}

Now we are ready to prove Proposition \ref{l1}.

\begin{proof}(of Proposition \ref{l1} )
	
	In the following, we consider for all $i$, the change of variable $\underline{z} = \sigma_i (\underline{x})$, where $\sigma_i$ is the restriction of $\sigma$ on the cylinder $[0;i]$. We also remark that $\gamma = \gamma _{\underline{x}}$, $\gamma _i := \gamma _{\sigma _i ^{-1}(\underline{x})}$ and $i\underline{x} = \sigma _i ^{-1}(\underline{x})$. Moreover, for a given signed measure $\mu$, we write $c(\underline{x}):= ||\mu|_{\gamma_{\underline{x}}}||_W$. Thus, Lemma \ref{niceformulaac} and equation (\ref{niceformulaa}) yield
	\begin{eqnarray*}
		||\func{F}^{\ast }\mu ||_{\infty} &=&\esssup\{ ||(\func{F}^{\ast }\mu
		)|_{\gamma_{\underline{x}} }||_{W} :\underline{x} \in \Sigma_A^+ \}\\
		&\leq &\esssup\{\sum_{i;A_{i,x_0}=1}{{\ \left|\left| \dfrac{\func{F}%
					_{\gamma _i}^{\ast }\mu |_{\gamma_i}}{%
					J_{m, \sigma_i} (i\underline{x} )}\right| \right|_W}} :\underline{x} \in \Sigma_A^+  \} \\
		&\le&\esssup\{\sum_{i;A_{i,x_0}=1}{{\ \dfrac{||\mu |_{\gamma _i}||_W}{%
					J_{m, \sigma_i} (i\underline{x} )}}} :\underline{x} \in \Sigma_A^+  \}
		\\
		&=&\esssup\{\sum_{i;A_{i,x_0}=1}{{\ \dfrac{c(\sigma_i^{-1}(\underline{x}))}{%
					J_{m, \sigma_i} (i\underline{x})}}\cdot } :\underline{x}\in \Sigma_A^+  \} \\
		&=&||\func {P} _{\sigma} (c) ||_{\infty}\\
		&\leq&||c ||_{\infty}\\ &=&||\mu ||_{\infty}.
	\end{eqnarray*}
\end{proof}






\subsection{Convergence to equilibrium}

In general, we say that the a transfer operator $\func{F}^*$ has convergence to equilibrium with
at least speed $\Phi$ and with respect to the norms $||\cdot||_{S^{\infty}}$ and $||\cdot||_{\infty}$, if for each $ \mu\in \mathcal{V}$, where 

\begin{equation}\label{Vss}
	\mathcal{V} =\{\mu\in S^{\infty}; \dfrac{d\pi_1^*\mu}{dm}\in \ker(\Pi_{\sigma})\},
\end{equation}it holds
\begin{equation}
	||\func{F}^{\ast  n}\mu||_{\infty}\leq \Phi (n)||\mu||_{S^{\infty}},  \label{wwe}
\end{equation}and $\Phi (n) \longrightarrow 0$ as $n \longrightarrow \infty$.

\begin{remark}\label{rem123}
	We observe that, $\mathcal{V}$ contains all zero average signed measure ($\mu(\Sigma)=0$). Indeed, if $\mu(\Sigma)=0$ (we remember that $\phi _1 = \frac{d(\pi_1^\ast \mu)}{dm}$), then 
	
	\begin{eqnarray*}
		\Pi_\sigma (\phi_1) = \int_{\Sigma^+ _A}{\phi_1}dm =\int_{\Sigma^+ _A}{\dfrac{d(\pi_1^\ast \mu)}{dm}}dm=\int_{\Sigma^+ _A}d(\pi_1^\ast \mu)=\mu(\pi_1^{-1}(\Sigma^+ _A))=\mu(\Sigma)=0.
	\end{eqnarray*}
\end{remark}

The following Lemma is useful to estimate the behaviour of the $||\cdot||_{W}$ under contractions.
\begin{lemma}
	If $F$ satisfies (G1), then for all signed measures $\mu $ on $K$ and for $m$-almost every $\underline{x} \in \Sigma _A^+$, it holds 
	
	\begin{equation*}
		||\func{F}_{\gamma_{\underline{x}} }^\ast \mu ||_{W}\leq \alpha ||\mu ||_{W}+|\mu (K)|
	\end{equation*}%
	($\alpha $ is the rate of contraction of $G$). In particular, if $\mu
	(K)=0$, then%
	\begin{equation*}
		||\func{F}_{\gamma_{\underline{x}} }^\ast \mu ||_{W}\leq \alpha  ||\mu ||_{W}.
	\end{equation*}%
	\label{quasicontract}
\end{lemma}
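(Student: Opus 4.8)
The plan is to test against Lipschitz functions and exploit the affine structure of the unit ball defining $||\cdot||_W$. Fix an arbitrary $g:K\longrightarrow \mathbb{R}$ with $||g||_\infty\leq 1$ and $L(g)\leq 1$; by the change of variables (pushforward) formula, exactly as in the proof of Lemma \ref{niceformulaac}, one has $\int g\,d\func{F}_\gamma^\ast\mu=\int g\circ F_\gamma\,d\mu$, so it suffices to bound $\left|\int g\circ F_\gamma\,d\mu\right|$ by $\alpha||\mu||_W+|\mu(K)|$ uniformly over such $g$.

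Next I would record the regularizing effect of the contraction: $\psi:=g\circ F_\gamma$ still satisfies $||\psi||_\infty\leq 1$, while its Lipschitz constant improves, since $F_\gamma$ is an $\alpha$-contraction (this is precisely (G1), transported through $\pi_{2,\gamma}$ as in Proposition \ref{niceformulaab}), so that $L(\psi)\leq\alpha L(g)\leq\alpha$. Then I renormalize $\psi$ to an admissible test function: fix any base point $x_0\in K$ and set $h:=\alpha^{-1}(\psi-\psi(x_0))$ (legitimate because $0<\alpha<1$). One checks $L(h)=\alpha^{-1}L(\psi)\leq 1$ and, using $\diam(K)=1$, that $||h||_\infty=\alpha^{-1}\sup_{x\in K}|\psi(x)-\psi(x_0)|\leq\alpha^{-1}L(\psi)\diam(K)\leq 1$, so $h$ is an admissible competitor in the definition of $||\cdot||_W$.

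Writing $\psi=\alpha h+\psi(x_0)$ and integrating against $\mu$ gives
\[
\left|\int\psi\,d\mu\right|\leq\alpha\left|\int h\,d\mu\right|+|\psi(x_0)|\,|\mu(K)|\leq\alpha||\mu||_W+|\mu(K)|,
\]
where the last inequality uses $|\psi(x_0)|\leq||g||_\infty\leq 1$ together with $||\mu||_W=W_1^0(0,\mu)$. Taking the supremum over all admissible $g$ yields the first inequality; specializing to the case $\mu(K)=0$ removes the second term and gives $||\func{F}_\gamma^\ast\mu||_W\leq\alpha||\mu||_W$.

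I do not expect a serious obstacle. The only point requiring care is the affine renormalization of the test function, and this is exactly where the normalization $\diam(K)=1$ is invoked (a larger diameter would only introduce a harmless multiplicative constant, which the paper has normalized away). The degenerate case $\mu=0$ is trivial, and since the whole argument is carried out pointwise in $\gamma$, the estimate holds for every $\gamma\in\Sigma_A^+$, not merely $m$-almost everywhere.
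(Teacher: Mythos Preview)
Your proof is correct and essentially identical to the paper's own argument: both subtract the value of $g\circ F_\gamma$ at a base point, use the $\alpha$-Lipschitz bound together with $\diam(K)=1$ to show the renormalized function is an admissible test function for $||\cdot||_W$, and then absorb the subtracted constant into the $|\mu(K)|$ term. The only cosmetic difference is that the paper writes $u=g\circ F_\gamma(z)$ for some $z$ where you write $\psi(x_0)$ for a fixed $x_0$.
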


\begin{proof}
	If $Lip(g)\leq 1$ and $||g||_{\infty }\leq 1$, then $g\circ F_{\gamma }$ is $
	\alpha $-Lipschitz, where $\gamma=\gamma_{\underline{x}}$. Moreover, since $||g||_{\infty }\leq 1$, then $||g\circ
	F_{\gamma }-u ||_{\infty }\leq \alpha $, for some $u$ s.t $|u| \leq 1$. Indeed, consider $z \in K$. We know that $|g \circ F_{\gamma } (z)| \leq 1$. Set $u = g \circ F_{\gamma } (z)$ and let $d$ be the metric of $K$. Thus, we have $$|g\circ
	F_{\gamma } (y)- u | \leq \alpha d(y,z) \leq \alpha \ \mbox{because the diamenter of} \ K \ \mbox{is bounded by} \ 1 $$ and consequently $||g\circ
	F_{\gamma }-u||_{\infty }\leq \alpha $.  
	
	This implies,
	\begin{align*}
		\left\vert \int _{K} {g}d\func{F}_{\gamma }^{\ast }\mu \right\vert & =\left\vert
		\int _{K} {g\circ F_{\gamma }}d\mu \right\vert \\
		& \leq \left\vert \int _{K} {g\circ F_{\gamma }-u }d\mu \right\vert +\left\vert
		\int _{K} {u}d\mu \right\vert \\
		& =\alpha  \left\vert \int _{K} {\frac{g\circ F_{\gamma }-u }{\alpha  }}d\mu
		\right\vert +|u| |\mu (K)|.
	\end{align*}%
	And taking the supremum over $||g||_{\infty }\leq 1$ and $Lip(g)\leq 1$, we
	have $||\func{F}_{\gamma }^{\ast }\mu ||_{W}\leq \alpha ||\mu ||_{W}+\mu
	(K)$. In particular, if $\mu (K)=0$, we get the second part.
\end{proof}

\subsection{${L}^{\infty }$ norms}\label{bggvd}

In this section we consider an $L^{\infty }$ like anisotropic norm. We show
how a Lasota Yorke inequality can be proved for this norm too.

\begin{lemma}
	If $F$ satisfies (G1), for all signed measure $\mu \in
	S^{\infty }$ with marginal density $\phi _1$ it holds 
	\begin{equation}\label{dragon}
		||\func{F}^{\ast }\mu ||_{\infty }\leq \alpha ||\mu
		||_{\infty }+|\phi _1|_{\infty }.
	\end{equation}
\end{lemma}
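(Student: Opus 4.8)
The plan is to combine the exact ``Perron--Frobenius''-like formula for the restriction $(\func{F}^{\ast}\mu)|_{\gamma}$ (Proposition \ref{niceformulaab}) with the quasi-contraction estimate of Lemma \ref{quasicontract}, and then use that the Perron--Frobenius operator $\func{P}_{\sigma}$ does not increase the $L^{\infty}(m)$ norm (the same fact already exploited at the end of the proof of Proposition \ref{l1}). Fix $\mu\in S^{\infty}$ with Jordan decomposition $\mu=\mu^{+}-\mu^{-}$, and put $c(\gamma):=\||\mu|_{\gamma}\||_{W}$, so that $\|c\|_{\infty}=\|\mu\|_{\infty}$. Since the restriction does not depend on the decomposition (Remark \ref{ghtyhh}), Proposition \ref{niceformulaab} gives, for $m$-a.e.\ $\gamma\in\Sigma_{A}^{+}$,
\begin{equation*}
(\func{F}^{\ast}\mu)|_{\gamma}=\sum_{i=1}^{N}\frac{\func{F}_{\sigma_{i}^{-1}(\gamma)}^{\ast}\,\mu|_{\sigma_{i}^{-1}(\gamma)}}{J_{m,\sigma_{i}}\circ\sigma_{i}^{-1}(\gamma)}\,\chi_{\sigma_{i}(P_{i})}(\gamma).
\end{equation*}

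First I would apply the triangle inequality for $\|\cdot\|_{W}$ to this sum and then invoke Lemma \ref{quasicontract} termwise (with the signed measure $\mu|_{\sigma_{i}^{-1}(\gamma)}$ on $K$), obtaining
\begin{equation*}
\|(\func{F}^{\ast}\mu)|_{\gamma}\|_{W}\leq\sum_{i=1}^{N}\frac{\alpha\,\|\mu|_{\sigma_{i}^{-1}(\gamma)}\|_{W}+|\mu|_{\sigma_{i}^{-1}(\gamma)}(K)|}{J_{m,\sigma_{i}}\circ\sigma_{i}^{-1}(\gamma)}\,\chi_{\sigma_{i}(P_{i})}(\gamma).
\end{equation*}
Next I would identify $\mu|_{\beta}(K)$. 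From Definition \ref{restrictionmeasure} and $\mu_{\beta}^{\pm}(\beta)=1$ one gets $\mu^{\pm}|_{\beta}(K)=\phi_{1}^{\pm}(\beta)$ with $\phi_{1}^{\pm}=\frac{d(\pi_{1}^{\ast}\mu^{\pm})}{dm}$, hence $\mu|_{\beta}(K)=\phi_{1}^{+}(\beta)-\phi_{1}^{-}(\beta)=\phi_{1}(\beta)$ and $|\mu|_{\beta}(K)|=|\phi_{1}(\beta)|$. Substituting, the right-hand side is exactly $\alpha\,\func{P}_{\sigma}(c)(\gamma)+\func{P}_{\sigma}(|\phi_{1}|)(\gamma)$.

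Finally I would take the essential supremum over $\gamma\in\Sigma_{A}^{+}$ and use that $\func{P}_{\sigma}$ is a weak contraction on $L^{\infty}(m)$ (because $m$ is $\sigma$-invariant, so $\|\func{P}_{\sigma}\varphi\|_{\infty}\leq\|\varphi\|_{\infty}$ for nonnegative $\varphi$), giving $\|\func{P}_{\sigma}(c)\|_{\infty}\leq\|c\|_{\infty}=\|\mu\|_{\infty}$ and $\|\func{P}_{\sigma}(|\phi_{1}|)\|_{\infty}\leq\||\phi_{1}|\|_{\infty}=|\phi_{1}|_{\infty}$. This yields
\begin{equation*}
\|\func{F}^{\ast}\mu\|_{\infty}=\esssup_{\gamma}\|(\func{F}^{\ast}\mu)|_{\gamma}\|_{W}\leq\alpha\|\mu\|_{\infty}+|\phi_{1}|_{\infty},
\end{equation*}
as claimed. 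I do not expect a genuine obstacle: the one point needing care is the bookkeeping that identifies $\mu|_{\beta}(K)$ with the marginal density $\phi_{1}(\beta)$ and recognizes the two resulting sums as $\func{P}_{\sigma}(c)$ and $\func{P}_{\sigma}(|\phi_{1}|)$; the exceptional set where $\func{P}_{\sigma}(\phi_{1})(\gamma)=0$ is harmless since the formula of Proposition \ref{niceformulaab} holds only $m$-a.e.\ anyway.
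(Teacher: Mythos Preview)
Your proposal is correct and follows essentially the same route as the paper: apply Proposition \ref{niceformulaab}, use the triangle inequality and Lemma \ref{quasicontract} termwise, identify $\mu|_{\beta}(K)$ with $\phi_{1}(\beta)$, and then recognize the two resulting sums as $\func{P}_{\sigma}$ applied to nonnegative functions so that $|\func{P}_{\sigma}\varphi|_{\infty}\leq|\varphi|_{\infty}$ finishes the job. The only cosmetic difference is that the paper bounds $\|\mu|_{\sigma_{i}^{-1}(\gamma)}\|_{W}\leq\|\mu\|_{\infty}$ before summing (so that $\func{P}_{\sigma}(1)=1$ appears), whereas you first sum to get $\func{P}_{\sigma}(c)$ and then bound by $\|c\|_{\infty}$; these are equivalent.
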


\begin{proof}
	Denote by $\sigma_i$ the restriction of $\sigma$ on the cylinder $[0;i]$, $\gamma = \gamma _{\underline{x}}, \ \underline{x}=(x_n)_{n\in \mathbb{Z}^+}$, $\gamma _i := \gamma _{\sigma _i ^{-1}(\underline{x})}$ and $i\underline{x} = \sigma _i ^{-1}(\underline{x})$. Applying Lemma \ref{quasicontract} on the third line below and using the facts that $\mu|_{\gamma _i}(K) = \phi
	_1(\sigma_i^{-1}(\underline{x})) $, $\func{P}_\sigma(1)=1$, $|\func{P}_\sigma(\phi_1)|_\infty \leq |\phi_1|_\infty$ we have
	\begin{eqnarray*}
		||(\func {F} ^\ast\mu )|_{\gamma }||_{W} &=&\left|\left| \sum_{i; A_{i,x_0}=1}\frac{%
			\func{F}_{\gamma_i}^{\ast }\mu |_{\gamma _i}}{%
			J_{m,\sigma_{i}}{(i\underline{x})}}\right| \right|_{W}
		\\
		&\leq &\sum_{i; A_{i,x_0}=1}\frac{||\func{F}_{\gamma _i}^{\ast }\mu
			|_{\gamma _i}||_{W}}{J_{m,\sigma_{i}}{(i\underline{x})}} \\
		&\leq &\sum_{i;A_{i,x_0}=1}\frac{\alpha ||\mu |_{\gamma _i}||_{W}+\mu|_{\gamma _i}(K)}{J_{m,\sigma_{i}}{(\sigma_i^{-1}(\underline{x}))}} \\
		&= &\sum_{i; A_{i,x_0}=1}\frac{\alpha ||\mu |_{\gamma _i}||_{W}+\phi
			_1(\sigma_i^{-1}(\underline{x} ) )}{J_{m,\sigma_{i}}{(\sigma_i^{-1}(\underline{x}))}}\\
		&\leq &
		\alpha ||\mu ||_{\infty }\sum_{i;A_{i,x_0}=1}\frac{1}{J_{m,\sigma_{i}}{(\sigma_i^{-1}(\underline{x} ))}}+\sum_{i; A_{i,x_0}=1}\frac{\phi
			_1(\sigma_i^{-1}(\underline{x}))}{J_{m,\sigma_{i}}{(\sigma_i^{-1}(\underline{x}))}}
		\\
		&= &
		\alpha ||\mu ||_{\infty }\func{P}_\sigma(1)(\gamma)+|\func{P}_\sigma(\phi_1)(\gamma)|\\
		&\leq&
		\alpha ||\mu ||_{\infty }+|\func{P}_\sigma(\phi_1)|_\infty \\
		&\leq&
		\alpha ||\mu ||_{\infty }+|\phi_1|_\infty.
	\end{eqnarray*}%
	
\end{proof}Iterating equation (\ref{dragon}) of the previous lemma one obtains

\begin{corollary}\label{nnnn}
	If $F$ satisfies (G1), then for all signed measure $\mu \in
	S^{\infty }$ it holds%
	\begin{equation*}
		||\func{F}^{\ast n}\mu ||_{\infty }\leq \alpha ^{n}||\mu ||_{\infty }+\dfrac{1}{1-\alpha}|\phi _1|_{\infty },
	\end{equation*} where $\phi _1$ is the marginal density of $\mu$.
\end{corollary}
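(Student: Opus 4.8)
The plan is to iterate the inequality of the previous lemma, the only subtlety being that at each step the "source term" is controlled by the marginal density of the \emph{current} iterate, not of $\mu$ itself, so I first need to observe that these marginal densities stay uniformly bounded by $|\phi_1|_\infty$.

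First I would record the behaviour of the marginal density under $\func{F}^\ast$. By Lemma \ref{transformula}, if $\mu\in\mathcal{AB}$ has marginal density $\phi_1 = \frac{d(\pi_1^\ast\mu)}{dm}$, then $\func{F}^\ast\mu\in\mathcal{AB}$ and its marginal density is $\func{P}_\sigma(\phi_1)$. Since $\func{P}_\sigma$ is a positive operator with $\func{P}_\sigma(1)=1$, it is a weak contraction of the $L^\infty$ norm, i.e. $|\func{P}_\sigma(\phi_1)|_\infty \le |\phi_1|_\infty$ (this fact was already used in the proof of the previous lemma). Iterating, the marginal density of $\func{F}^{\ast k}\mu$ equals $\func{P}_\sigma^k(\phi_1)$ and satisfies $|\func{P}_\sigma^k(\phi_1)|_\infty \le |\phi_1|_\infty$ for every $k\ge 0$.

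Next I would argue by induction on $n$. The case $n=1$ is exactly the previous lemma. Assuming the bound for $n-1$, apply the previous lemma to the measure $\func{F}^{\ast(n-1)}\mu\in S^\infty$, whose marginal density is $\func{P}_\sigma^{n-1}(\phi_1)$:
\begin{equation*}
||\func{F}^{\ast n}\mu||_\infty \le \alpha\,||\func{F}^{\ast(n-1)}\mu||_\infty + |\func{P}_\sigma^{n-1}(\phi_1)|_\infty \le \alpha\,||\func{F}^{\ast(n-1)}\mu||_\infty + |\phi_1|_\infty .
\end{equation*}
Using the inductive hypothesis $||\func{F}^{\ast(n-1)}\mu||_\infty \le \alpha^{n-1}||\mu||_\infty + \frac{1}{1-\alpha}|\phi_1|_\infty$ gives
\begin{equation*}
||\func{F}^{\ast n}\mu||_\infty \le \alpha^n\,||\mu||_\infty + \Big(\tfrac{\alpha}{1-\alpha}+1\Big)|\phi_1|_\infty = \alpha^n\,||\mu||_\infty + \tfrac{1}{1-\alpha}|\phi_1|_\infty ,
\end{equation*}
which closes the induction. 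Alternatively, one can simply unroll the one-step inequality directly to get $||\func{F}^{\ast n}\mu||_\infty \le \alpha^n||\mu||_\infty + \big(\sum_{k=0}^{n-1}\alpha^k\big)|\phi_1|_\infty$ and bound the geometric sum by $\frac{1}{1-\alpha}$.

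There is essentially no hard step here; the only point to be careful about is not to write $|\phi_1|_\infty$ where one means the marginal density of the iterate, which is why the uniform bound $|\func{P}_\sigma^k(\phi_1)|_\infty \le |\phi_1|_\infty$ coming from $\func{P}_\sigma(1)=1$ and positivity of $\func{P}_\sigma$ is the one ingredient that makes the geometric series collapse to the stated constant.
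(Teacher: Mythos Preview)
Your proof is correct and is exactly the approach the paper takes: the paper simply says ``Iterating the main inequality of the previous lemma one obtains'' and states the corollary without further detail. You have supplied the one point the paper leaves implicit, namely that the marginal density of $\func{F}^{\ast k}\mu$ is $\func{P}_\sigma^{k}(\phi_1)$ and that $|\func{P}_\sigma^{k}(\phi_1)|_\infty\le|\phi_1|_\infty$, which is precisely what makes the geometric series collapse to $\tfrac{1}{1-\alpha}|\phi_1|_\infty$.
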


The following proposition shows a regularizing action of the transfer operator with respect to the strong norm. 
Such inequalities are usually called Lasota-Yorke or Doeblin-Fortet inequalities.
\begin{proposition}[Lasota-Yorke inequality for $S^{\infty }$]
	Suppose that $F$ satisfies (G1). Then,
	there are $0<\alpha _1 <1$ and $B_4\in \mathbb{R}$ such that for all $\mu \in S^{\infty}$, it holds%
	\begin{equation*}
		||\func{F}^{\ast n}\mu ||_{S^{\infty }}\leq 2\alpha _1 ^{n}||\mu ||_{S^{\infty
		}}+B_4||\mu ||_{\infty}.
	\end{equation*}
	\label{LYinfty}
\end{proposition}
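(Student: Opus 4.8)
The plan is to split $\|\func F^{\ast n}\mu\|_{S^{\infty}}=\|\phi_1^{(n)}\|_{\theta}+\|\func F^{\ast n}\mu\|_{\infty}$, where $\phi_1^{(n)}$ denotes the marginal density $d\pi_1^{\ast}(\func F^{\ast n}\mu)/dm$. Iterating Lemma \ref{transformula} gives $\phi_1^{(n)}=\func P_{\sigma}^{\,n}(\phi_1)$, so the marginal part is controlled by the spectral gap of $\func P_{\sigma}$ on $\mathcal F_{\theta}(\Sigma^+_A)$ recalled in Section \ref{sec2}; the fibered part will be handled by iterating the one-step inequality $\|\func F^{\ast}\nu\|_{\infty}\le\alpha\|\nu\|_{\infty}+|\phi_{\nu}|_{\infty}$ (the lemma preceding Corollary \ref{nnnn}), feeding into it the decay obtained for the marginal part.

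\textbf{Marginal part.} Decompose $\phi_1=\Pi_{\sigma}\phi_1+\phi_1^{0}$ with $\phi_1^{0}\in\ker\Pi_{\sigma}$. Since $\func P_{\sigma}\mathbf 1=\mathbf 1$ and $\int\phi_1\,dm=\mu(\Sigma)$, the leading term is $\Pi_{\sigma}\phi_1=\mu(\Sigma)\mathbf 1$, so $\func P_{\sigma}^{\,n}\phi_1=\mu(\Sigma)\mathbf 1+\func P_{\sigma}^{\,n}\phi_1^{0}$ and the spectral gap yields $\|\phi_1^{(n)}\|_{\theta}\le|\mu(\Sigma)|+Dr^{\,n}\|\phi_1^{0}\|_{\theta}$. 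Now $|\mu(\Sigma)|=\bigl|\int_{\Sigma^+_A}\mu|_{\gamma}(K)\,dm(\gamma)\bigr|\le\esssup_{\gamma}\|\mu|_{\gamma}\|_{W}=\|\mu\|_{\infty}$ (test $\|\mu|_{\gamma}\|_{W}$ against $g\equiv1$), while $\|\phi_1^{0}\|_{\theta}\le\|\phi_1\|_{\theta}+|\mu(\Sigma)|\le\|\phi_1\|_{\theta}+\|\mu\|_{\infty}=\|\mu\|_{S^{\infty}}$. Hence $\|\phi_1^{(n)}\|_{\theta}\le Dr^{\,n}\|\mu\|_{S^{\infty}}+\|\mu\|_{\infty}$, and in particular $|\phi_1^{(k)}|_{\infty}\le\|\phi_1^{(k)}\|_{\theta}\le Dr^{\,k}\|\mu\|_{S^{\infty}}+\|\mu\|_{\infty}$ for every $k\ge0$.

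\textbf{Fibered part.} Iterating $\|\func F^{\ast k}\mu\|_{\infty}\le\alpha\|\func F^{\ast(k-1)}\mu\|_{\infty}+|\phi_1^{(k-1)}|_{\infty}$ gives $\|\func F^{\ast n}\mu\|_{\infty}\le\alpha^{n}\|\mu\|_{\infty}+\sum_{k=0}^{n-1}\alpha^{\,n-1-k}|\phi_1^{(k)}|_{\infty}$; inserting the bound above and summing the geometric series in $\|\mu\|_{\infty}$ together with the geometric convolution $\sum_{k=0}^{n-1}\alpha^{\,n-1-k}r^{\,k}$ in $\|\mu\|_{S^{\infty}}$, one obtains, for any fixed $\alpha_1\in(\max\{\alpha,r\},1)$, an estimate $\|\func F^{\ast n}\mu\|_{\infty}\le C_1\alpha_1^{\,n}\|\mu\|_{S^{\infty}}+C_2\|\mu\|_{\infty}$. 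Adding the marginal estimate yields $\|\func F^{\ast n}\mu\|_{S^{\infty}}\le C_3\alpha_1^{\,n}\|\mu\|_{S^{\infty}}+B_4\|\mu\|_{\infty}$; the same two estimates also show $\sup_n\|\func F^{\ast n}\|_{S^{\infty}\to S^{\infty}}<\infty$ (using that $\|\cdot\|_{\infty}$ is $\func F^{\ast}$-non-increasing, Proposition \ref{l1}), which lets one renormalize the geometric rate and absorb the constant to arrive at the stated form with coefficient $2\alpha_1^{\,n}$.

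\textbf{Main obstacle.} The delicate step is the fibered estimate: one must track the convolution of the two distinct geometric rates — the fiber contraction $\alpha$ and the spectral-gap rate $r$ of $\func P_{\sigma}$ — and verify that the accumulated marginal-density errors collapse to a term comparable to $\|\mu\|_{\infty}$ plus a genuinely geometrically small multiple of $\|\mu\|_{S^{\infty}}$; everything else is bookkeeping with the norms already introduced.
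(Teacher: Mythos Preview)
Your argument is essentially correct and produces a Lasota--Yorke inequality of the right shape, but you have made what you call the ``main obstacle'' harder than it is, and the final renormalization step to force the coefficient $2$ is not actually justified as written.

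The paper's proof is much shorter because it bypasses the convolution of rates entirely. For the fibered part it invokes Corollary~\ref{nnnn} directly: since $\func P_\sigma$ is an $L^\infty$-contraction one has $|\phi_1^{(k)}|_\infty\le|\phi_1|_\infty$ for every $k$, so the one-step lemma iterates to $\|\func F^{\ast n}\mu\|_\infty\le\alpha^n\|\mu\|_\infty+\tfrac{1}{1-\alpha}|\phi_1|_\infty$ with \emph{only} the original $|\phi_1|_\infty$ appearing on the right. For the marginal part the paper uses a classical one-step Lasota--Yorke inequality for $\func P_\sigma$ on $\mathcal F_\theta$, namely $\|\func P_\sigma^{\,n}\phi_1\|_\theta\le\theta^{\,n}\|\phi_1\|_\theta+C_2|\phi_1|_\infty$, rather than the spectral-gap decomposition. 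Adding the two lines and using $|\phi_1|_\infty\le\|\mu\|_\infty$, $\|\phi_1\|_\theta\le\|\mu\|_{S^\infty}$, $\|\mu\|_\infty\le\|\mu\|_{S^\infty}$ yields $\theta^{\,n}\|\phi_1\|_\theta+\alpha^n\|\mu\|_\infty\le 2\alpha_1^{\,n}\|\mu\|_{S^\infty}$ with $\alpha_1=\max\{\theta,\alpha\}$ and $B_4=C_2+\tfrac{1}{1-\alpha}$; the constant $2$ falls out immediately.

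By contrast, your route through the spectral gap picks up the constant $D$ in front of $r^n$, and the convolution $\sum_k\alpha^{\,n-1-k}r^k$ contributes further constants, so you end at $C_3\alpha_1^{\,n}\|\mu\|_{S^\infty}+B_4\|\mu\|_\infty$ with $C_3>2$ in general. Your proposed fix---use uniform boundedness of $\|\func F^{\ast n}\|_{S^\infty\to S^\infty}$ to absorb $C_3$ into $2$---does not work: for the finitely many small $n$ where $C_3\alpha_1^{\,n}>2\alpha_1^{\,n}$ you would be trying to bound a multiple of $\|\mu\|_{S^\infty}$ by a multiple of $\|\mu\|_\infty$, and these norms are not comparable in that direction. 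This is a minor blemish (the constant $2$ is cosmetic for all later uses), but the cleanest repair is precisely the paper's: replace your convolution by Corollary~\ref{nnnn} and your spectral-gap splitting by the basic Doeblin--Fortet inequality for $\func P_\sigma$.
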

\begin{proof} Below we shall use the spectral gap for $\func P_{\sigma}$
	
	\begin{eqnarray*}
		||\func{F}^{\ast n}\mu ||_{S^{\infty }} &=&||\func{P}_\sigma ^{n}\phi _1||_{\theta}+||\func{F{^{*n}}}\mu ||_{\infty } \\
		&\leq &\theta^n||\phi _{1}||_{\theta}+C_2|\phi _{1}|_{\infty}+\alpha ^{n}||\mu ||_{\infty }+\frac{1}{1-\alpha}|\phi _{1}|_{\infty }
		\\
		&\leq &2 \alpha_1^n||\mu||_{S^{\infty}} +(C_2+\frac{1}{1-\alpha})|\phi _{1}|_{\infty} 
	\end{eqnarray*}%
	where $|\phi_1|_{\infty}\leq ||\mu ||_{\infty}$, $||\phi
	_1||_{\theta}\leq ||\mu ||_{S^{\infty}}$, $||\mu||_{\infty}\le ||\mu||_{S^{\infty}}$ and we set $\alpha_1=\max\{\theta,\alpha\}$.
\end{proof}

\begin{remark}\label{nova}
	By equation (\ref{1}) in Lemma \ref{transformula}, we have
	\begin{equation*}\label{yujhsdf}
		\dfrac{d\pi_1^*(\func{F}^{\ast n}\mu)}{dm} = \func{P}_{\sigma}^n(\phi _1)
	\end{equation*}for all $n \in \mathbb{N}$. Moreover, since $m$ is $\sigma$-invariant, it follows that $\func{P}_{\sigma}(1)=1$ for $m$-almost every $\underline{x} \in \Sigma_A^+$.
	
	Additionally, for any $\underline{x} \in \Sigma_A^+$ and any $h \in L^\infty$, where we denote $i\underline{x} := \sigma _i ^{-1}(\underline{x})$, we have:
	
	\begin{eqnarray*}
		\vert \func{P}_\sigma h (x)\vert &=& \left \vert \sum_{i=1}^N{\dfrac{h(i \underline{x})}{J_{m,\sigma_i}(i\underline{x} )}} \right \vert \\& \leq & |h|_\infty \left\vert \sum_{i=1}^N{\dfrac{1}{J_{m,\sigma_i}(i\underline{x} )}} \right \vert \\& = & |h|_\infty \left\vert \func{P}_\sigma (1) \right\vert \\& = & |h|_\infty. 
	\end{eqnarray*}Thus, we obtain
	
	\begin{equation*}\label{çoaeutrislç}
		|  \func{P}_\sigma h |_\infty \leq |h|_\infty,
	\end{equation*}for all $h \in L^\infty$. Combining the above with the earlier result, we conclude that
	\begin{equation}\label{novo}
		\left | \dfrac{d\pi_1^*(\func{F}^{\ast n}\mu)}{dm} \right|_\infty \leq |\phi_1|_\infty
	\end{equation}for all $\mu \in \mathcal{L}^\infty$ and all $n \in \mathbb{N}$.
\end{remark}

\begin{proposition}[Exponential convergence to equilibrium]\label{5.8}
	Suppose that $F$ satisfies (G1). There exist $D_2\in \mathbb{R}$ and $0<\beta _{1}<1$ such that for every
	signed measure $\mu \in \mathcal{V}$ (see equation (\ref{Vss})), it holds 
	\begin{equation*}
		||\func{F}^{\ast n}\mu ||_{\infty}\leq D_{2}\beta _{1}^{n}||\mu ||_{S^{\infty}},
	\end{equation*}%
	for all $n\geq 1$. \label{quasiquasiquasi}
\end{proposition}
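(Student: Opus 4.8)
The plan is to derive the statement by combining three facts already at hand: the spectral gap of the Perron--Frobenius operator $\func{P}_{\sigma}$ on $\mathcal{F}_{\theta}(\Sigma^+_A)$ recalled in Section~\ref{sec1}, the weak contraction $||\func{F}^{\ast}\mu||_{\infty}\le||\mu||_{\infty}$ of Proposition~\ref{l1}, and the fiberwise Lasota--Yorke inequality of Corollary~\ref{nnnn}, namely $||\func{F}^{\ast n}\mu||_{\infty}\le\alpha^{n}||\mu||_{\infty}+\frac{1}{1-\alpha}|\phi_1|_{\infty}$. The crucial point is that a measure $\mu\in\mathcal{V}$ has marginal density $\phi_1=\frac{d(\pi_1^{\ast}\mu)}{dm}\in\ker(\Pi_{\sigma})$, and that by Lemma~\ref{transformula} (extended to signed measures by linearity) the marginal density of $\func{F}^{\ast j}\mu$ is exactly $\func{P}_{\sigma}^{\,j}\phi_1$; hence the spectral gap forces it to decay exponentially in $||\cdot||_{\theta}$, and therefore also in $|\cdot|_{\infty}$, while Corollary~\ref{nnnn} converts a small marginal density into a small $||\cdot||_{\infty}$-norm after a controlled number of further iterations.

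Concretely, let $D>0$ and $r\in(0,1)$ be the constants of the spectral gap of $\func{P}_{\sigma}$, so that $|\func{P}_{\sigma}^{\,j}\phi_1|_{\infty}\le||\func{P}_{\sigma}^{\,j}\phi_1||_{\theta}\le Dr^{j}||\phi_1||_{\theta}\le Dr^{j}||\mu||_{S^{\infty}}$ for all $j\ge0$. First I would note that $\func{F}^{\ast j}\mu\in S^{\infty}$ for every $j$: its marginal density $\func{P}_{\sigma}^{\,j}\phi_1$ lies in $\mathcal{F}_{\theta}(\Sigma^+_A)$ since $\func{P}_{\sigma}$ preserves that space, and its $||\cdot||_{\infty}$-component stays finite by Proposition~\ref{l1}; hence Corollary~\ref{nnnn} may be applied to any such iterate. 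Then, for a fixed $n\ge1$, I would split $n=p+q$ with $p=\lceil n/2\rceil$ and $q=\lfloor n/2\rfloor$ and apply Corollary~\ref{nnnn} to $\func{F}^{\ast q}\mu$ (whose marginal density is $\func{P}_{\sigma}^{\,q}\phi_1$), together with $||\func{F}^{\ast q}\mu||_{\infty}\le||\mu||_{\infty}\le||\mu||_{S^{\infty}}$ from Proposition~\ref{l1}, to obtain
\[
||\func{F}^{\ast n}\mu||_{\infty}=||\func{F}^{\ast p}(\func{F}^{\ast q}\mu)||_{\infty}\le\alpha^{p}||\func{F}^{\ast q}\mu||_{\infty}+\frac{1}{1-\alpha}|\func{P}_{\sigma}^{\,q}\phi_1|_{\infty}\le\Bigl(\alpha^{p}+\frac{Dr^{q}}{1-\alpha}\Bigr)||\mu||_{S^{\infty}}.
\]
Since $p\ge n/2$ and $q\ge(n-1)/2$, setting $\beta_1:=\max\{\sqrt{\alpha},\sqrt{r}\}\in(0,1)$ and $D_2:=1+\frac{D}{(1-\alpha)\sqrt{r}}$ yields $\alpha^{p}\le\beta_1^{\,n}$ and $\frac{Dr^{q}}{1-\alpha}\le\frac{D}{(1-\alpha)\sqrt{r}}\,\beta_1^{\,n}$, so $||\func{F}^{\ast n}\mu||_{\infty}\le D_2\beta_1^{\,n}||\mu||_{S^{\infty}}$ for all $n\ge1$, as required.

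I do not expect a genuine obstacle: this is the familiar ``cut the orbit in two halves'' device. The only step needing care is the bookkeeping that allows Corollary~\ref{nnnn} to be restarted at time $q$ rather than at time $0$, which is what turns its non-decaying error term $\frac{1}{1-\alpha}|\phi_1|_{\infty}$ into the exponentially small $\frac{1}{1-\alpha}|\func{P}_{\sigma}^{\,q}\phi_1|_{\infty}$; this is essential, since without it one only gets a bound $||\func{F}^{\ast n}\mu||_{\infty}\le C||\mu||_{S^{\infty}}$ with no rate. Alternatively, one may avoid the explicit split and iterate the one-step estimate directly, getting $||\func{F}^{\ast n}\mu||_{\infty}\le\alpha^{n}||\mu||_{\infty}+D||\mu||_{S^{\infty}}\sum_{j=0}^{n-1}\alpha^{\,n-1-j}r^{\,j}$ and then absorbing the polynomial factor $n\max\{\alpha,r\}^{\,n-1}$ into any $\beta_1\in(\max\{\alpha,r\},1)$; the two routes are equivalent.
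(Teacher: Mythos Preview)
Your proof is correct and follows essentially the same route as the paper: split $n$ into two halves, apply Corollary~\ref{nnnn} on the later block so that its non-decaying error term becomes $\frac{1}{1-\alpha}|\func{P}_{\sigma}^{\,q}\phi_1|_{\infty}$, then use the spectral gap of $\func{P}_{\sigma}$ on $\ker(\Pi_{\sigma})$ together with the weak contraction of Proposition~\ref{l1} to make both terms decay like $\beta_1^{n}$ with $\beta_1=\max\{\sqrt{\alpha},\sqrt{r}\}$. The only differences are cosmetic (your $p=\lceil n/2\rceil$, $q=\lfloor n/2\rfloor$ versus the paper's $n=2l+d$, and correspondingly slightly different explicit constants $D_2$).
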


\begin{proof}
	Given $\mu \in \mathcal{V}$ and denoting $\phi _{1}:=\dfrac{\pi_1^\ast\mu}{dm} $ it holds that $\phi \in \Ker(\func {\Pi}_{\sigma})$. Thus, $||\func{P}_{\sigma}^{n}(\phi _{1})||_{\theta}\leq Dr^{n}||\phi _{1}||_{\theta}$ for
	all $n\geq 1$, therefore $||\func{P}_{\sigma}^{n}(\phi _{1})||_{\theta}\leq Dr^{n}||\mu||_{S^{\infty}}$ for all $n\geq 1$.
	
	Let $l$ and $d \in \{ 0, 1 \}$ be such that $n=2l+d$; hence, $l=\frac{n-d}{2}$. Moreover, Proposition \ref{weakcontral11234} implies that $||\func{F}^{\ast s}%
	\mu ||_{\infty}\leq ||\mu ||_{\infty}$, for all $s \in \mathbb{N}$, and we also have $||\mu ||_{\infty}\leq ||\mu||_{S^{\infty}}$. Then, by equation (\ref{novo}) in Remark \ref{nova} (used in the third inequality below), Corollary \ref{nnnn} (with $1>\beta _1=\max \{\sqrt{r},\sqrt{\alpha}\}$ and $\overline{\alpha }= \dfrac{1}{1-\alpha}$), and since $\beta_1^d \geq \beta_1$ for all $d \in \{ 0, 1 \}$, it follows that

	\begin{eqnarray*}
		||\func{F}^{\ast n}\mu ||_{\infty} &\leq &||\func{F}^{\ast 2l+d } \mu
		||_{\infty} \\
		&\leq &\alpha ^{l}||\func{F}^{\ast l+d} \mu ||_{\infty}+\dfrac{1}{1-\alpha}
		\left|\dfrac{d (\pi _1^\ast (\func{F}^{\ast l+d} \mu ))}{dm}\right|_{\infty} \\
		&\leq &\alpha ^{l}||\mu ||_{\infty}+\dfrac{1}{1-\alpha}|\func{P}_{\sigma}^{l}(\phi
		_{1})|_{\infty}  \\
		&\leq &\alpha ^{l}||\mu ||_{\infty}+\dfrac{1}{1-\alpha}||\func{P}_{\sigma}^{l}(\phi
		_{1})||_{\theta} \\
		&\leq &\alpha ^{l}||\mu ||_{\infty}+\dfrac{1}{1-\alpha}Dr^{l}||\mu ||_{S^{\infty}}\\
		&\leq &(1+\overline{\alpha }D)\beta_1^{-d}\beta_1^n||\mu ||_{S^{\infty}}\\
		&\leq &D_2 \beta_1^n||\mu ||_{S^{\infty}},
	\end{eqnarray*} where $D_2 = \dfrac{1+\overline{\alpha }D}{\beta_1}$, which does not depend on $n$.
\end{proof}

\begin{remark}\label{quantitative}
	We remark that the rate of convergence to equilibrium, $\beta_1$, for the map $F$ found above, is directly related to the rate of contraction, $\alpha$, of the stable foliation, and to the rate of convergence to equilibrium, $r$, of the induced basis map $T$. More precisely, $\beta_1 = \max \{\sqrt{\alpha}, \sqrt{r}\}$. Similarly, we have an explicit estimate for the constant $D_2$, provided we have an estimate for $D$ in the basis map\footnote{It can be difficult to find a sharp estimate for $D$. An approach allowing to find some useful upper estimates is shown in \cite{GNS} }.
\end{remark}

Let $\mu_0$ be the $F$-invariant probability measure constructed by lifting $m$ as an application of Theorem \ref{kjdhkskjfkjskdjf}. By construction, it holds $d(\pi _1^* \mu_0)/dm = 1 \in \mathcal{F}^+_{\theta}(\Sigma_A^{+}).$ With this fact in hands, let us prove theorem \ref{probun}.

\begin{proof}(of Theorem \ref{probun})
	
	Since $F$ that satisfies (G1), Theorem \ref{kjdhkskjfkjskdjf} and Proposition \ref{kjdhkskjfkjskdjff} guarantee the existence of a unique $F$-invariant probability $\mu_0$ such that $\pi _{1*}\mu _0=m$. Moreover, note that $\dfrac{d (\pi _1 ^\ast \mu _0)}{dm} = 1 \in \mathcal{F}^+_{\theta}$. Thus, $||\mu_0|_\gamma||_W=1$ (since it is a probability) and $||\mu _0 ||_\infty =1$. Therefore, $\mu _0 \in S^{\infty}$. 
	
	For the uniqueness, if $\mu _{0}, \mu _{1}\in S^\infty$ are $F$-invariant probabilities, i.e. $\mu_0(\Sigma) = \mu_1(\Sigma)=1$, then by Remark \ref{rem123}, $\mu
	_{0}-\mu _{1}\in \mathcal{V}$. By Proposition \ref{5.8}, $\func{F}^{\ast n}(\mu_{0}-\mu_{1})\rightarrow 0$ in $\mathcal{L}^\infty$. Therefore, $\mu _0 - \mu _1 = 0$.
\end{proof}

\begin{example}\label{dereroidf}
	Let $\mathcal{C}:=\{\varphi _1, \varphi_2, \cdots, \varphi _N\}$ be a finite family of contractions $\phi _i : K \longrightarrow K$, $i=1, \cdots, N$ introduced in Example \ref{kjfhjsfg}. Besides that, let us consider the Hutchinson`s invariant measure of equation (\ref{nvbbjdjfdf}). Since the measure $m \times \mu$ belongs to $S ^\infty$, the Theorem \ref{probun}, yields that
	\begin{equation}\label{oyiuotyu}
		\mu_0 = m \times \mu.
	\end{equation}We believe that, following the ideas of \cite{GLu} and \cite{RRRSTAB}, it is possible to prove statistical stability (computing the modulus of continuity), for the Hutchinson's measure $\mu$, under deterministic perturbations of the IFS.
\end{example}

\section{Spectral gap}\label{wqweer}

In this section, we prove a spectral gap statement for the transfer operator applied to our strong spaces.
For this, we will directly use the properties proved in the previous section, and this will give a kind of constructive proof.
We remark that, like in \cite{GLu}, we cannot appy the traditional Hennion, or Ionescu-Tulcea and Marinescu's approach to our function spaces because there is no compact immersion of the strong space into the weak one. This comes from the fact that we are  considering the same \textquotedblleft dual of Lipschitz\textquotedblright distance in the contracting direction for both spaces.

\begin{proof}(of Theorem \ref{spgap})
	First, let us show there exist $0<\xi <1$ and $M_1>0$ such that, for all $n\geq
	1$, it holds
	\begin{equation}
		||\func{F}^{\ast n}||_{{\mathcal{V}}\rightarrow {\mathcal{V}}}\leq M_1\xi
		^{n}.  \label{quaselawww}
	\end{equation}%
	Indeed, consider $\mu \in \mathcal{V}$ (see equation (\ref{Vss})) s.t. $%
	||\mu ||_{S^{\infty}}\leq 1$ and for a given $n\in \mathbb{N}$ let $m$ and $
	d\in \{0,1\}$ such that $n=2m+d$.
	Thus $m=\frac{n-d}{2}$. By the Lasota-Yorke inequality (Proposition \ref%
	{LYinfty}) we have the uniform bound $||\func{F}^{\ast n}\mu
	||_{S^{\infty}}\leq 2+ B_{4}$ for all $n\geq 1$. Moreover, by Propositions \ref%
	{quasiquasiquasi} and \ref{weakcontral11234} there is some $D_{2}$ such that
	it holds (below, let $\lambda _{0}$ be defined by $\lambda _{0}=\max \{\beta
	_{1},\theta \}$)%
	\begin{eqnarray*}
		||\func{F}^{\ast n}\mu ||_{S^{\infty}} &\leq &2\theta ^{m}||\func{F}^{\ast m+d} \mu ||_{S^{\infty}}+B_{4}||\func{F}^{\ast m+d} \mu ||_{\infty} \\
		&\leq &\theta ^{m}2(2+B_{4})+B_{4}||F^{\ast }{^{m}}\mu ||_{\infty} \\
		&\leq &\theta ^{m}2(2+B_{4})+B_{4}D_{2}\beta _{1}^{m} \\
		&\leq &\lambda _{0}^{m}\left[ 2(2+B_{4})+B_{4}D_{2}\right] \\
		&\leq &\lambda _{0}^{\frac{n-d}{2}}\left[ 2(2+B_{4})+B_{4}D_{2}\right] \\
		&\leq &\left( \sqrt{\lambda _{0}}\right) ^{n}\left( \frac{1}{\lambda _{0}}%
		\right) ^{\frac{d}{2}}\left[ 2(2+B_{4})+B_{4}D_{2}\right] \\
		&=&M_1\xi ^{n},
	\end{eqnarray*}%
	where $\xi =\sqrt{\lambda _{0}}$ and $M_1=\left( \frac{1}{%
		\lambda _{0}}\right) ^{\frac{1}{2}}\left[ 2(2+B_{4})+B_{4}D_{2}\right] $. Thus, we arrive at 
	\begin{equation}
		||(\func{F}^{\ast }|_{_{\mathcal{V}}}){^{n}}||_{S^{\infty}\rightarrow S^{\infty}}\leq
		M_1\xi ^{n}.  \label{just}
	\end{equation}
	
	Now, recall that $\func{F}^{\ast }:S^{\infty}\longrightarrow S^{\infty}$ has a unique
	fixed point $\mu _{0} \in S^\infty$, which is a probability (see Theorem \ref{probun}). We write $\func{P}:S^{\infty}\longrightarrow %
	\left[ \mu _{0}\right] $ ($\left[ \mu _{0}\right] $ is the space spanned by 
	$\mu _{0}$), for the operator defined by $\func{P}(\mu )=\mu (\Sigma)\mu _{0}$. By definition, $%
	\func{P}$ is a projection and $\dim Im(P)=1$. Define the operator%
	\begin{equation*}
		\func{S}:S^{\infty}\longrightarrow \mathcal{V},
	\end{equation*}%
	by%
	\begin{equation*}
		\func{S}(\mu )=\mu -\func{P}(\mu ), \ \ \ \mathnormal{\forall }\ \ \mu \in S^\infty.
	\end{equation*}%
	Thus, we set $\func{N}=\func{F}^{\ast }\circ \func{S}$ and observe that, by
	definition, $\func{P}\func{N}=\func{N}\func{P}=0$ and $\func{F}^{\ast }=\func{%
		P}+\func{N}$. Moreover, $\func{N}^{n}(\mu )=\func{F}^{\ast }{^{n}}(\func{S}%
	(\mu ))$ for all $n \geq 1$. Since $\func{S}$ is bounded and $\func{S}(\mu )\in \mathcal{V}$, we
	get by (\ref{just}), $||\func{N}^{n}(\mu )||_{S^{\infty}}\leq M\xi ^{n}||\mu ||_{S^{\infty}}$, for all $n \geq 1$, where $M=M_1||\func{S}%
	||_{S^{\infty}\rightarrow S^{\infty}}$.
\end{proof}

\begin{remark}\label{quantitative2}
	We remark, the constant $\xi$ for the map $F$, found in Theorem \ref{spgap}, is directly related to the coefficients of the Lasota-Yorke inequality and the rate of convergence to equilibrium of $F$ found before (see Remark \ref{quantitative}). More precisely, $\xi = \max \{\sqrt{\lambda_0} , \sqrt{\beta _1}\}$. We remark that, from the above proof we also have an explicit estimate for $K$ in the exponential convergence, while many classical approaches are not suitable for this. 
\end{remark}

\section{Consequences}\label{kfdjfkjdfeddere}
\subsection{Lipschitz regularity of the disintegration of the invariant measure}
We have seen that a positive measure on $\Sigma_A^+ \times K$, can be disintegrated along the stable
leaves $\mathcal{F}^s$ in a way that we can see it as a family of positive measures on $K$, $\{\mu |_\gamma\}_{\gamma \in \mathcal{F}^s }$. Since there is a one-to-one correspondence between $\mathcal{F}^s$  and $\Sigma_A^+$, this defines a  path
in the metric space of positive measures, $\Sigma_A^+ \longmapsto \mathcal{SB}(K)$, where $\mathcal{SB}(\Sigma_A^+)$ is endowed with the Wasserstein-Kantorovich Like metric (see Definition \ref{wasserstein}). 
It will be convenient to use a functional notation and denote such a path by  
$\Gamma_{\mu } : \Sigma_A^+ \longrightarrow \mathcal{SB}(K)$  defined almost everywhere by $\Gamma_{\mu } (\gamma) = \mu|_\gamma$, where $(\{\mu _{\gamma }\}_{\gamma \in \mathcal{F}^s},\phi_{1})$ is some disintegration for $\mu$.
However, since such a disintegration is defined $\widehat{\mu}$-a.e. $\gamma \in \mathcal{F}^s$, the path $\Gamma_\mu$ is not unique.  For this reason we define more precisely $\Gamma_{\mu } $ as the class of almost everywhere equivalent paths corresponding to $\mu$.

\begin{definition}
	Consider a positive Borel measure $\mu$ and a disintegration  $\omega=(\{\mu _{\gamma_{\underline{x}} }\}_{\underline{x} \in \Sigma _A^+},\phi
	_1)$, where $\{\mu _{\gamma_{\underline{x}} }\}_{\underline{x} \in \Sigma_A^+ }$ is a family of
	probabilities on $\Sigma$ defined $\mu _1$-a.e. $\underline{x} \in \Sigma_A^+$ (where $\mu _1 = \phi _1 m$) and $\phi
	_1:\Sigma_A^+\longrightarrow \mathbb{R}$ is a non-negative marginal density. Denote by $\Gamma_{\mu }$ the class of equivalent paths associated to $\mu$ 
	\begin{equation*}
		\Gamma_{\mu }=\{ \Gamma^\omega_{\mu }\}_\omega,
	\end{equation*}
	where $\omega$ ranges on all the possible disintegrations of $\mu$ and $\Gamma^\omega_{\mu }: \Sigma_A ^+\longrightarrow \mathcal{SB}(\Sigma_A^+)$ is the map associated to a given disintegration, $\omega$:
	$$\Gamma^\omega_{\mu }(\underline{x})=\mu |_{\gamma} = \pi _{\gamma, 2} ^\ast \phi _1(\underline{x})\mu _\gamma,$$where $\gamma = \gamma_{\underline{x}}$.
\end{definition}Let us call the set on which $\Gamma_{\mu }^\omega $ is defined by $I_{\Gamma_{\mu }^\omega } \left( \subset \Sigma _A ^+\right)$ and in the following definition we set $\gamma _1 = \gamma _{\underline{x}^1}$ and $\gamma _1 = \gamma _{\underline{x}^2}$ where $\underline{x}^1, \underline{x}^2 \in \Sigma _A^+$.



\begin{definition}Given a disintegration $\omega$ of $\mu$ and its functional representation $\Gamma_{\mu }^\omega $ we define the \textbf{Lipschitz constant of $\mu$ associated to $\omega$} by

	\begin{equation}\label{Lips1}
		|\mu|_\theta ^\omega := \esssup _{\underline{x}^1, \underline{x}^2 \in I_{\Gamma_{\mu }^\omega}} \left\{ \dfrac{||\mu|_{\gamma _1}- \mu|_{\gamma _2}||_W}{d_\theta (\underline{x}^1,\underline{x}^2)}\right\},
	\end{equation}where the essential supremum is taken with respect to the Markov measure $m$. By the end, we define the \textbf{Lipschitz constant} of the positive measure $\mu$ by

	\begin{equation}\label{Lips2}
		|\mu|_\theta :=\displaystyle{\inf_{ \Gamma_{\mu }^\omega \in \Gamma_{\mu } }\{|\mu|_\theta ^\omega\}}.
	\end{equation}
	
	\label{Lips3}
\end{definition}

\begin{remark}
	When there is no scope of confusion, to simplify the notation, we denote $\Gamma_{\mu }^\omega (\underline{x})$ just by $\mu |_{\gamma}$.
\end{remark}

\begin{definition} \label{erfcscvdsd}
	From the Definition \ref{Lips3} we define the set of Lipschitz positive measures $\mathcal{L} _\theta^{+}$ as
	\begin{equation}
		\mathcal{L}_\theta ^{+}=\{\mu \in \mathcal{AB}:\mu \geq 0,|\mu |_\theta <\infty \}.
	\end{equation}
\end{definition}For the next lemma, for a given path, $\Gamma _\mu$ which represents the measure $\mu$, we define for each $\gamma \in I_{\Gamma_{\mu }^\omega }\subset \Sigma _A ^+$, the map

\begin{equation}
	\mu _F(\underline{x}) := \func{F_\gamma^*}\mu|_\gamma,
\end{equation}where $F_\gamma :K \longrightarrow K$ is defined as $F_\gamma (y) = \pi_2 \circ F \circ {(\pi _2|_\gamma)} ^{-1}(y)$, $\pi_2 : \Sigma _A ^+\times K \longrightarrow \Sigma _A ^+\times K$ is the projection $\pi_2(x,y)=y$ and $\gamma = \gamma_{\underline{x}}$. In the proof of the following lemma, we also use the notation $\gamma _1 = \gamma _{\underline{x}^1}$ and $\gamma _1 = \gamma _{\underline{x}^2}$ where $\underline{x}^1, \underline{x}^2 \in \Sigma _A^+$. 

\begin{lemma}\label{hdhfjdf} If $F$ satisfies (G1) and (G2), then for all representation $\Gamma _\mu$ of a positive measure $\mu \in \mathcal{L}_\theta ^{+}$, it holds
	\begin{equation}
		|\mu_F|_\theta \leq |\mu|_\theta ^\omega + H ||\mu||_\infty.
	\end{equation}
\end{lemma}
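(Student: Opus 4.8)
The plan is to estimate, for $m$-a.e.\ pair $\gamma_1,\gamma_2$ in the full-measure set $I_{\Gamma_{\mu}^\omega}\subset\Sigma_A^+$ on which the chosen disintegration $\omega$ is defined, the quantity $||\mu_F(\gamma_1)-\mu_F(\gamma_2)||_W$ by a constant multiple of $d_\theta(\gamma_1,\gamma_2)$, divide by $d_\theta(\gamma_1,\gamma_2)$, and take the essential supremum. The starting remark is that, identifying a leaf $\gamma\in\mathcal{F}^s$ with the corresponding point $\underline{x}\in\Sigma_A^+$, the fiber map is simply $F_\gamma(y)=\pi_2\circ F(\underline{x},y)=G(\underline{x},y)$; thus (G1) says $F_\gamma$ is an $\alpha$-contraction, and (G2) says that $F_{\gamma_1}$ and $F_{\gamma_2}$ are uniformly close on $K$, namely $d(F_{\gamma_1}(y),F_{\gamma_2}(y))=d(G(\underline{x}_1,y),G(\underline{x}_2,y))\le H\,d_\theta(\gamma_1,\gamma_2)$ for every $y\in K$. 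First I would split, using the triangle inequality for $||\cdot||_W$ and inserting an intermediate term in which either the map or the measure is frozen:
\begin{equation*}
||\mu_F(\gamma_1)-\mu_F(\gamma_2)||_W\le||\func{F}_{\gamma_1}^{\ast}(\mu|_{\gamma_1}-\mu|_{\gamma_2})||_W+||(\func{F}_{\gamma_1}^{\ast}-\func{F}_{\gamma_2}^{\ast})\mu|_{\gamma_2}||_W .
\end{equation*}

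For the first summand I would argue as in the proof of Lemma~\ref{niceformulaac}: since $F_{\gamma_1}$ is an $\alpha$-contraction with $\alpha<1$, whenever $||g||_\infty\le1$ and $L(g)\le1$ one also has $||g\circ F_{\gamma_1}||_\infty\le1$ and $L(g\circ F_{\gamma_1})\le\alpha\le1$, so $||\func{F}_{\gamma_1}^{\ast}\nu||_W\le||\nu||_W$ for every signed measure $\nu$ on $K$; together with the definition (\ref{Lips1}) of $|\mu|_\theta^\omega$ this gives
\begin{equation*}
||\func{F}_{\gamma_1}^{\ast}(\mu|_{\gamma_1}-\mu|_{\gamma_2})||_W\le||\mu|_{\gamma_1}-\mu|_{\gamma_2}||_W\le|\mu|_\theta^\omega\,d_\theta(\gamma_1,\gamma_2).
\end{equation*}
For the second summand, for any test function $g$ with $||g||_\infty\le1$ and $L(g)\le1$ the change of variables for push-forwards gives
\begin{equation*}
\left|\int_K g\,d(\func{F}_{\gamma_1}^{\ast}\mu|_{\gamma_2})-\int_K g\,d(\func{F}_{\gamma_2}^{\ast}\mu|_{\gamma_2})\right|=\left|\int_K(g\circ F_{\gamma_1}-g\circ F_{\gamma_2})\,d(\mu|_{\gamma_2})\right|\le\int_K d(F_{\gamma_1}(y),F_{\gamma_2}(y))\,d(\mu|_{\gamma_2})(y),
\end{equation*}
and by (G2) the last integrand is $\le H\,d_\theta(\gamma_1,\gamma_2)$ uniformly in $y$, while for $m$-a.e.\ $\gamma_2$ the positive measure $\mu|_{\gamma_2}$ has total mass $\mu|_{\gamma_2}(K)=\phi_1(\gamma_2)=||\mu|_{\gamma_2}||_W\le||\mu||_\infty$; taking the supremum over such $g$ yields $||(\func{F}_{\gamma_1}^{\ast}-\func{F}_{\gamma_2}^{\ast})\mu|_{\gamma_2}||_W\le H\,||\mu||_\infty\,d_\theta(\gamma_1,\gamma_2)$. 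Adding the two estimates, dividing by $d_\theta(\gamma_1,\gamma_2)$ and taking the essential supremum over $\gamma_1,\gamma_2\in I_{\Gamma_{\mu}^\omega}$ gives $|\mu_F|_\theta\le|\mu|_\theta^\omega+H\,||\mu||_\infty$.

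I do not expect a real obstacle here: the content is the identification $F_\gamma=G(\underline{x},\cdot)$ together with the two-term triangle split, after which the first piece is handled by (G1) and the second by (G2). The points needing care are purely bookkeeping: all the inequalities above are asserted only on the full-measure set where $\omega$ is defined and where $\phi_1\le||\mu||_\infty$ holds; and one must use that the total mass of the fiber measure $\mu|_\gamma$ equals $\phi_1(\gamma)$, which for a positive measure coincides with $||\mu|_\gamma||_W$ since constant functions are admissible test functions for $W_1^0$, so that the second term comes out with the weak norm $||\mu||_\infty$ rather than with an $L^1$-type quantity.
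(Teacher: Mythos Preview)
Your proof is correct and follows essentially the same route as the paper: the same triangle split inserting $\func{F}_{\gamma_1}^{\ast}\mu|_{\gamma_2}$, the contraction (G1) for the first piece via Lemma~\ref{niceformulaac}, and (G2) together with $\mu|_{\gamma_2}(K)=||\mu|_{\gamma_2}||_W\le||\mu||_\infty$ for the second. Your additional remarks on the full-measure set and the identification of the fiber mass with $\phi_1(\gamma)$ make explicit what the paper leaves implicit.
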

\begin{proof}
	\begin{eqnarray*}
		||\func{F_{\gamma _1}^*}\mu|_{\gamma_1}- \func{F_{\gamma _2}^*}\mu|_{\gamma_2}||_W &\leq & ||\func{F_{\gamma _1}^*}\mu|_{\gamma_1}- \func{F_{\gamma _1}^*}\mu|_{\gamma_2}||_W \\&+&||\func{F_{\gamma _1}^*}\mu|_{\gamma_2}- \func{F_{\gamma _2}^*}\mu|_{\gamma_2}||_W 
		\\ &\leq & ||\mu|_{\gamma_1}- \mu|_{\gamma_2}||_W +||\func{F_{\gamma _1}^*}\mu|_{\gamma_2}- \func{F_{\gamma _2}^*}\mu|_{\gamma_2}||_W
		\\ &\leq & ||\mu|_{\gamma_1}- \mu|_{\gamma_2}||_W \\&+&\int{d_2(G(\underline{x}^1, y),G(\underline{x}^2,y))} d(\mu|{\gamma _2}) (y).
	\end{eqnarray*}	Thus,

	\begin{eqnarray*}
		\dfrac{||\func{F_{\gamma _1}^*}\mu|_{\gamma_1}- \func{F_{\gamma _2}^*}\mu|_{\gamma_2}||_W}{d_\theta (\underline{x}^1,\underline{x}^2)} 
		&\leq &  \dfrac{||\mu|_{\gamma_1}- \mu|_{\gamma_2}||_W}{d_\theta (\underline{x}^1,\underline{x}^2)} \\ &+& \dfrac{\esssup _yd_2(G(\underline{x}^1,y), G(\underline{x}^2,y))}{d_\theta (\underline{x}^1,\underline{x}^2)} \int {1}d(\mu|{\gamma _2}) (y)
		\\ &\leq &  |\mu|_\theta ^\omega + H||\mu |_{\gamma _2}||_W.
	\end{eqnarray*}Therefore,
	
	\begin{equation}
		|\mu_F|_\theta \leq |\mu|_\theta ^\omega + H ||\mu||_\infty.
	\end{equation}
	
\end{proof}

For the next proposition and henceforth, for a given path $\Gamma _\mu ^\omega \in \Gamma_{ \mu }$ (associated with the disintegration $\omega = (\{\mu _\gamma\}_\gamma, \phi _1)$, of $\mu$, unless written otherwise, we consider the particular path $\Gamma_{\func{F}^{\ast}\mu} ^\omega \in \Gamma_{\func{F}^{\ast}\mu}$ defined by the Proposition \ref{niceformulaab}, by the expression

\begin{equation}
	\Gamma_{\func{F}^{\ast}\mu} ^\omega (\underline{x})=\sum_{i; A_{i,x_0}=1}\func{F}^{\ast}%
	_{\gamma _i} \Gamma _\mu ^\omega (i\underline{x})g _i(i\underline{x})\ \ m%
	\mathnormal{-a.e.}\ \ \underline{x} \in \Sigma ^+ _A,  \label{niceformulaaareer}
\end{equation}where $g_i(\underline{x})=\dfrac{1}{J_{m,\sigma_{i}}{(\underline{x})}}$ for all $x \in [0;i]$ and all $i=1, \cdots, N$. We recall that, $\Gamma_{\mu} ^\omega (\underline{x}) = \mu|_\gamma:= \pi_{2}^\ast (\phi_1(\underline{x})\mu _\gamma)$ and in particular $\Gamma_{\func{F}^*\mu} ^\omega (\underline{x}) = (\func{F}^*\mu)|_\gamma = \pi_{2}^*(\func{P}_\sigma\phi_1(\underline{x})\mu _\gamma)$, where $\phi_1 = \dfrac{d \pi _{1}^* \mu}{dm}$, $\gamma = \gamma_{\underline{x}}$, $i\underline{x}:=\sigma_i^{-1}(\underline{x})$ and $\func{P}_\sigma$ is the Perron-Frobenius operator of $f$.

\begin{proposition}\label{kjsdkjduidf}
	If $F$ satisfies (G1) and (G2), then for all representation $\Gamma _\mu$ of a positive measure $\mu \in \mathcal{L}_\theta ^{+}$, it holds 
	
	\begin{equation}
		|\Gamma^{\omega}_{\func{F}^*\mu}|_\theta ^\omega \leq \theta  |\mu|_\theta ^\omega + C_1 ||\mu||_\infty,
	\end{equation}where $C_1 = \max \{ H\theta +\theta N |g|_\theta , 2\}$.
\end{proposition}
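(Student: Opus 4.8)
The plan is to work directly from the explicit formula (\ref{niceformulaaareer}) for the chosen disintegration $\Gamma^{\omega}_{\func{F}^{\ast}\mu}$ and to estimate, for $m$-a.e.\ pair $\gamma_1,\gamma_2\in\Sigma_A^+$, the quotient $||\Gamma^{\omega}_{\func{F}^{\ast}\mu}(\gamma_1)-\Gamma^{\omega}_{\func{F}^{\ast}\mu}(\gamma_2)||_W/d_\theta(\gamma_1,\gamma_2)$, and then pass to the essential supremum. I would split according to whether $\gamma_1,\gamma_2$ lie in the same $0$-cylinder. If they lie in \emph{different} $0$-cylinders, then $d_\theta(\gamma_1,\gamma_2)\ge 1$, so by the triangle inequality for $||\cdot||_W$ and Proposition \ref{l1} (the weak norm is weakly contracted by $\func{F}^{\ast}$) the quotient is at most $||(\func{F}^{\ast}\mu)|_{\gamma_1}||_W+||(\func{F}^{\ast}\mu)|_{\gamma_2}||_W\le 2||\func{F}^{\ast}\mu||_\infty\le 2||\mu||_\infty\le C_1||\mu||_\infty$; this is the source of the ``$2$'' in $C_1$, and it is within the asserted bound.

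The substantive case is when $\gamma_1,\gamma_2$ lie in the \emph{same} $0$-cylinder. Then $\chi_{\sigma_i(P_i)}(\gamma_1)=\chi_{\sigma_i(P_i)}(\gamma_2)$ for every $i$, so the two sums in (\ref{niceformulaaareer}) have matching index sets and may be subtracted term by term. Writing, for fixed $i$, $a_j:=\func{F}_{\sigma_i^{-1}(\gamma_j)\ast}\Gamma^\omega_\mu(\sigma_i^{-1}(\gamma_j))$ and $b_j:=g_i(\sigma_i^{-1}(\gamma_j))$, I use $a_1b_1-a_2b_2=b_1(a_1-a_2)+a_2(b_1-b_2)$ together with positivity of $b_1,b_2$ to get $||a_1b_1-a_2b_2||_W\le b_1||a_1-a_2||_W+|b_1-b_2|\,||a_2||_W$. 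For the first summand I invoke the pointwise inequality established inside the proof of Lemma \ref{hdhfjdf}, combined with hypothesis (G2) and the definition of $|\mu|_\theta^\omega$, noting that $\sigma_i^{-1}$ contracts $d_\theta$ exactly by the factor $\theta$ (prepending a symbol multiplies $d_\theta$ by $\theta$); this gives $||a_1-a_2||_W\le\theta\bigl(|\mu|_\theta^\omega+H||\mu||_\infty\bigr)d_\theta(\gamma_1,\gamma_2)$. Summing over $i$ and using $\sum_i g_i(\sigma_i^{-1}(\gamma_1))\chi_{\sigma_i(P_i)}(\gamma_1)=\func{P}_\sigma(1)(\gamma_1)=1$ bounds the total first contribution by $\theta\bigl(|\mu|_\theta^\omega+H||\mu||_\infty\bigr)d_\theta(\gamma_1,\gamma_2)$. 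For the second summand, Lemma \ref{niceformulaac} gives $||a_2||_W\le||\mu|_{\sigma_i^{-1}(\gamma_2)}||_W\le||\mu||_\infty$, while $g_i\circ\sigma_i^{-1}$ is $\theta|g|_\theta$-Lipschitz, so $|b_1-b_2|\le\theta|g|_\theta d_\theta(\gamma_1,\gamma_2)$; summing the $N$ branches contributes $\theta N|g|_\theta||\mu||_\infty d_\theta(\gamma_1,\gamma_2)$. Adding the two contributions and dividing by $d_\theta(\gamma_1,\gamma_2)$ yields $\theta|\mu|_\theta^\omega+(H\theta+\theta N|g|_\theta)||\mu||_\infty$.

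Combining the two cases and taking the essential supremum over $\gamma_1,\gamma_2$ produces $|\Gamma^{\omega}_{\func{F}^{\ast}\mu}|_\theta^\omega\le\theta|\mu|_\theta^\omega+C_1||\mu||_\infty$ with $C_1=\max\{H\theta+\theta N|g|_\theta,2\}$. I expect the only real care to be in the second case: keeping track of the characteristic functions so that the term-by-term subtraction is legitimate, and accumulating correctly the $\theta$-contraction factors coming from the inverse branches $\sigma_i^{-1}$ (which is exactly what produces the decisive $\theta$ in front of $|\mu|_\theta^\omega$). The measure-zero issues (the disintegration being defined only a.e., and its preimages under the nonsingular maps $\sigma_i$ still being full measure) are routine and already handled by the a.e.\ qualifiers in Proposition \ref{niceformulaab}; beyond that everything is the triangle inequality and the cited lemmas.
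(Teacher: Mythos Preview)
Your proposal is correct and follows essentially the same route as the paper's proof: the same two-case split according to whether $\gamma_1,\gamma_2$ lie in a common $0$-cylinder, the same use of (\ref{niceformulaaareer}) with termwise subtraction in the common-cylinder case, the same application of Lemma~\ref{hdhfjdf} together with the $\theta$-contraction of the inverse branches for the measure-difference part, and the same Lipschitz estimate on the weights $g_i$ for the remaining part. The only cosmetic difference is that you decompose $a_1b_1-a_2b_2=b_1(a_1-a_2)+a_2(b_1-b_2)$ whereas the paper uses $a_1(b_1-b_2)+b_2(a_1-a_2)$; both lead to the identical bound $\theta|\mu|_\theta^\omega+(H\theta+\theta N|g|_\theta)\|\mu\|_\infty$ in the same-cylinder case, and the different-cylinder case is handled identically.
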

\begin{proof}
	Below we use the notation $g_j(\underline{x})=\dfrac{1}{J_{m,\sigma_{j}}{(\underline{x})}}$ and for a given sequence $\underline{x} = (x_i)_{i \in \mathbb{Z}^+}  \in \Sigma ^+ _{A}$ we denote by $j\underline{x} = (z_i)_{i \in \mathbb{Z}^+}$ the sequence defined by $z_0 = j$ and $z_i = x_{i-1}$ for all $i \geq 1$. In this case, it is easy to see that $ d_\theta(j\underline{x}^1, j\underline{x}^2) = \theta d_\theta (\underline{x}^1, \underline{x}^2)$ for all $\underline{x}^1, \underline{x}^2 \in \Sigma _A ^+$.
	
	We divide the proof of the proposition in two parts. First we consider two sequences $\underline{x}^1 = (x_i)_{i \in \mathbb{Z}^+}$ and $\underline{x}^2 = (z_i)_{i \in \mathbb{Z}^+}$ such that $x_0=z_0$. By Proposition \ref{niceformulaab} (and equation (\ref{niceformulaaareer})) and setting $\gamma^1 = \gamma_{\underline{x}^1}$, $\gamma^2 = \gamma_{\underline{x}^2}$, $\gamma_i := \gamma_{i \underline{x}}$ for all $\underline{x} \in \Sigma_A^+$ and all $i=1, \cdots, N$, we have
	
	\begin{eqnarray*}
		||\Gamma_{\func{F}^*\mu} ^\omega (\underline{x}^1)- \Gamma_{\func{F}^*\mu} ^\omega (\underline{x}^2) ||_W &\leq& \sum_{j: A_{j,x_0}=1}{||\func{F_{\gamma ^1_j}^*}\mu|_{\gamma ^1_j}||_W}|g_j(j\underline{x}^1)-g_j(j\underline{x}^2)|\\&+& \sum _{j: A_{j,x_0}=1}{|g_j(j\underline{x}^2)|||\func{F_{\gamma ^1_j}^*}\mu|_{\gamma ^1_j}-\func{F_{\gamma ^2_j}^*}\mu|_{\gamma ^2_j}||_W}.
	\end{eqnarray*}Thus,

	\begin{eqnarray*}
		\dfrac{||\Gamma_{\func{F}^*\mu} ^\omega (\underline{x}^1)- \Gamma_{\func{F}^*\mu} ^\omega (\underline{x}^2)||_W}{d_\theta(\underline{x}^1,\underline{x}^2)} 
		&\leq& \theta \sum_j{||\func{F_{\gamma ^1_j}^*}\mu|_{\gamma ^1_j}||_W}\dfrac{|g_j(j\underline{x}^1)-g_j(j\underline{x} ^2)|}{\theta d_\theta(\underline{x}^1, \underline{x}^2)} \\&+& \theta\sum_j{|g_j(j\underline{x}^2)|\dfrac{||\func{F_{\gamma ^1_j}^*}\mu|_{\gamma ^1_j}-\func{F_{\gamma ^2_j}^*}\mu|_{\gamma ^2_j}||_W}{\theta d_\theta(\underline{x}^1, \underline{x}^2)}}
		\\&\leq& \theta \sum_j{||\func{F_{\gamma ^1_j}^*}\mu|_{\gamma ^1_j}||_W}\dfrac{|g_j(j\underline{x}^1)-g_j(j\underline{x}^2)|}{ d_\theta(j\underline{x}^1, j\underline{x}^2)} \\&+& \theta\sum_j{|g_j(j\underline{x}^2)|\dfrac{||\func{F_{\gamma ^1_j}^*}\mu|_{\gamma ^1_j}-\func{F_{\gamma ^2_j}^*}\mu|_{\gamma ^2_j}||_W}{ d_\theta(j\underline{x}^1, j\underline{x}^2)}}
		\\&:=&S_1+S_2.
	\end{eqnarray*}
	Note that since $||\func{F}_{\gamma}^{\ast}\mu|_{\gamma}||_{W}\le ||\mu|_{\gamma}||_W$
	\begin{eqnarray*} S_1&:=&\theta \sum_j{||\func{F_{\gamma ^1_j}^*}\mu|_{\gamma ^1_j}||_W}\dfrac{|g_j(j\underline{x}^1)-g_j(j\underline{x}^2)|}{\theta d_\theta(\underline{x}^1, \underline{x}^2)}\\ &\le&  \theta \sum_{j}||\mu|_{\gamma ^1_j}||_{W}|g|_{\theta} \le \theta N |g|_{\theta}||\mu||_{\infty}
	\end{eqnarray*}
	and 
	\begin{eqnarray*}S_2 &:=& \theta\sum_j{|g_j(j\underline{x}^2)|\dfrac{||\func{F_{\gamma ^1_j}^*}\mu|_{\gamma ^1_j}-\func{F_{\gamma ^2_j}^*}\mu|_{\gamma ^2_j}||_W}{ d_\theta(j\underline{x}^1, j\underline{x}^2)}}\\ &\le & \theta \sum_{j}g_j(j\underline{x}^2)|\mu_F|^{\omega}_{\theta}, \ \mbox{remember the definition of} \ \mu_F \ \mbox{in the equation (7.4).} 
	\end{eqnarray*}
	Applying Lemma \ref{hdhfjdf}, we get
	\begin{eqnarray*}
		\dfrac{||\Gamma_{\func{F}^*\mu} ^\omega (\underline{x}^1)- \Gamma_{\func{F}^*\mu} ^\omega (\underline{x}^2)||_W}{d_\theta(\underline{x}^1, \underline{x}^2)} 
		&\leq&  \theta N |g|_\theta||\mu||_{\infty}+ \theta\sum_j{|g_j(j\underline{x}^2)||\mu_F|_\theta ^\omega}
		\\	&\leq&  \theta N |g|_\theta||\mu||_{\infty}+ \theta\sum_j{|g_j(j\underline{x}^2)|}\left(|\mu|_\theta^\omega + H ||\mu||_\infty \right)
		\\	&=&  \theta  |\mu|_{\theta} ^\omega+ \left(H\theta + N \theta |g|_\theta\right) ||\mu||_\infty .
	\end{eqnarray*}Taking the supremum over $\underline{x}^1$ and $\underline{x}^2$ we get 
	
	\begin{equation}
		|\func{F}^*\mu|_\theta ^\omega \leq \theta  |\mu|_\theta ^\omega + \left[H\theta +\theta N |g|_\theta \right] ||\mu||_\infty,
	\end{equation}where $H$ was defined by equation (\ref{hhfksdjfksdfsdfsd}). 
	
	In the remaining case, where $x_0 \neq y_0$ it holds $d_\theta (\underline{x}^1,\underline{x}^2) \geq 1$. Then, by Lemma \ref{niceformulaac} we have
	
	\begin{eqnarray*}
		\dfrac{||\Gamma_{\func{F}^*\mu} ^\omega (\underline{x}^1)- \Gamma_{\func{F}^*\mu} ^\omega (\underline{x}^2) ||_W}{d_\theta(\underline{x}^1, \underline{x}^2)} &\leq & ||(\func{F}^*\mu)|_{\gamma^1}- (\func{F}^*\mu)|_{\gamma ^2} ||_W \\&\leq & 2 ||\mu ||_\infty.
	\end{eqnarray*} We finish the proof by setting $C_1 = \max \{ H\theta +\theta N |g|_\theta , 2\}$.

\end{proof}Iterating the inequality given by the previous proposition we immediately get

\begin{theorem}\label{hdjfhsdfjsd}
	If $F$ satisfies (G1) and (G2), then for all representation $\Gamma _\mu$ of a positive measure $\mu \in \mathcal{L}_\theta ^{+}$ and all $n \geq 1$, it holds 
	
	\begin{equation}\label{uerjerh}
		|\Gamma^{\omega}_{\func{F{^*}}^n\mu}|_\theta ^\omega \leq \theta^n |\mu|_\theta ^\omega + \dfrac{C_1}{1- \theta} ||\mu||_\infty,
	\end{equation}where $C_1$ was defined in Proposition \ref{kjsdkjduidf} by $C_1 = \max \{ H\theta +\theta N |g|_\theta , 2\}$.
\end{theorem}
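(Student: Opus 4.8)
The plan is a direct induction on $n$, with the single-step estimate of Proposition \ref{kjsdkjduidf} as the engine and the weak-norm contraction $||\func{F}^{\ast}\mu||_{\infty}\le ||\mu||_{\infty}$ of Proposition \ref{weakcontral11234} used to keep the weak norm uniformly bounded along the orbit.

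First I would dispose of the base case $n=1$: it is exactly Proposition \ref{kjsdkjduidf}, together with the trivial bound $C_1\le \frac{C_1}{1-\theta}$ (valid since $0<\theta<1$ and $C_1>0$). Next I would record the two bookkeeping facts that make the iteration legitimate. (i) Since $(\Sigma_A^+,d_\theta)$ has finite diameter (at most $(1-\theta)^{-1}$), any representation $\omega$ of $\mu\in\mathcal{L}_\theta^{+}$ with $|\mu|_\theta^\omega<\infty$ has essentially bounded leafwise weak mass, so $\mathcal{L}_\theta^{+}\subseteq\mathcal{L}^{\infty}$ and Proposition \ref{weakcontral11234} applies; iterating it gives $||\func{F}^{\ast k}\mu||_{\infty}\le ||\mu||_{\infty}$ for every $k\ge 0$. (ii) By Lemma \ref{transformula} the pushforward $\func{F}^{\ast}\mu$ again lies in $\mathcal{AB}$ and is again a positive measure, and by Proposition \ref{kjsdkjduidf} its canonical representation (the one built from Proposition \ref{niceformulaab} via (\ref{niceformulaaareer})) has finite Lipschitz constant; hence $\func{F}^{\ast}\mu\in\mathcal{L}_\theta^{+}$ and one may apply Proposition \ref{kjsdkjduidf} again to $\func{F}^{\ast}\mu$, to $\func{F}^{\ast 2}\mu$, and so on, the representations chaining so that $\Gamma^{\omega}_{\func{F{_\ast }}^{k}\mu}$ is the image under (\ref{niceformulaaareer}) of $\Gamma^{\omega}_{\func{F{_\ast }}^{k-1}\mu}$.

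For the inductive step, assume (\ref{uerjerh}) holds with $n-1$ in place of $n$. Applying Proposition \ref{kjsdkjduidf} to the measure $\func{F}^{\ast(n-1)}\mu$ (with the chained representation described in (ii)), and then using $||\func{F}^{\ast(n-1)}\mu||_{\infty}\le ||\mu||_{\infty}$ from (i), gives
$$|\Gamma^{\omega}_{\func{F{_\ast }}^{n}\mu}|_\theta ^\omega \le \theta\,|\Gamma^{\omega}_{\func{F{_\ast }}^{n-1}\mu}|_\theta ^\omega + C_1\,||\func{F}^{\ast(n-1)}\mu||_{\infty} \le \theta\,|\Gamma^{\omega}_{\func{F{_\ast }}^{n-1}\mu}|_\theta ^\omega + C_1\,||\mu||_{\infty}.$$
Substituting the induction hypothesis $|\Gamma^{\omega}_{\func{F{_\ast }}^{n-1}\mu}|_\theta ^\omega\le \theta^{n-1}|\mu|_\theta^\omega+\frac{C_1}{1-\theta}||\mu||_\infty$ then yields
$$|\Gamma^{\omega}_{\func{F{_\ast }}^{n}\mu}|_\theta ^\omega \le \theta^{n}|\mu|_\theta^\omega + \Bigl(\tfrac{\theta C_1}{1-\theta}+C_1\Bigr)||\mu||_\infty = \theta^{n}|\mu|_\theta^\omega + \tfrac{C_1}{1-\theta}||\mu||_\infty,$$
which is (\ref{uerjerh}) for $n$. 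Equivalently, unrolling the recursion in one shot produces the geometric sum $C_1||\mu||_\infty\sum_{k=0}^{n-1}\theta^{k}\le \frac{C_1}{1-\theta}||\mu||_\infty$.

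Since the argument is merely an iteration of an already-established one-step inequality, I do not expect a genuine obstacle; the only care required is the bookkeeping of point (ii) — checking that the iterates $\func{F}^{\ast k}\mu$ stay positive and in $\mathcal{L}_\theta^{+}$ so that Proposition \ref{kjsdkjduidf} keeps applying, and that the canonical representations chain correctly so that the uniform weak bound $||\func{F}^{\ast k}\mu||_{\infty}\le ||\mu||_{\infty}$ may be inserted at every stage of the induction.
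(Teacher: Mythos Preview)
Your proposal is correct and is exactly the approach the paper takes: the paper's entire proof is the single line ``Iterating the inequality given by the previous proposition we immediately get,'' and you have spelled out that iteration, correctly invoking Proposition~\ref{weakcontral11234} to bound $||\func{F}^{\ast k}\mu||_{\infty}\le ||\mu||_{\infty}$ at each step and summing the resulting geometric series. The bookkeeping you flag in (ii) about chaining representations via (\ref{niceformulaaareer}) is precisely the convention the paper adopts in the paragraph preceding Proposition~\ref{kjsdkjduidf}, so there is no hidden obstacle.
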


\begin{remark}\label{kjedhkfjhksjdf}
	Taking the infimum (with respect to $\omega$) on both sides of inequality \ref{uerjerh} we get for each $\mu \in \mathcal{L}_\theta ^{+}$
	
	\begin{equation}\label{kuyhj}
		|\func{F{^*}}^n\mu|_\theta  \leq \theta^n |\mu|_\theta + \dfrac{C_1}{1- \theta} ||\mu||_\infty,
	\end{equation}for all $n \geq 1$.
\end{remark}

\begin{remark}\label{riirorpdf}
	For a given probability measure $\nu$ on $K$. Denote by $m_1$, the product $m_1=m \times \nu$, where $m$ is the Markov measure fixed in the subsection \ref{sec1}. Besides that, consider its trivial disintegration $\omega_0 =(\{m_{1\gamma}  \}_{\gamma}, \phi_1)$, given by $m_{1,\gamma} = \func{\pi _{2,\gamma}^{-1}{^*}}\nu$, for all $\gamma$ and $\phi _1 \equiv 1$. According to this definition, it holds that 
	\begin{equation*}
		m_1|_\gamma = \nu, \ \ \forall \ \gamma.
	\end{equation*}In other words, the path $\Gamma ^{\omega _0}_{m_1}$ is constant: $\Gamma ^{\omega _0}_{m_1} (\gamma)= \nu$ for all $\gamma$. In particular, this implies that 
	\begin{equation}\label{oiyiye}
		|m_1|_\theta ^{\omega_0} = 0.
	\end{equation}Moreover, for each $n \in \mathbb{N}$, let $\omega_n$ be the particular disintegration for the measure $\func{F{^\ast }}^nm_1$, defined from $\omega_0$ as an application of Lemma \ref{transformula} and consider the path $\Gamma^{\omega_{n}}_{\func{F}{^*}{^n} m_1}$ associated with this disintegration. 
	
	A finite string $\underline{a}=(a_0,a_1,...,a_{n-1})\in \{1,2,...,N\}^n$ is \textit{allowed} if $A_{a_{i-i},a_{i}}=1$ for $i=1,...,n-1$. The set of allowed strings of length $n$ is denoted by $\mathcal{A}_n$. If $\underline{a} \in \mathcal{A}_n$, then $\sigma^{-n}{\underline{a}}$ is well-defined on the set ${\underline{x}; A_{a_{n-1}, x_0} = 1}$, taking values on the cylinder $[0; \underline{a}] := {\underline{x}; x_0 = a_0, x_1 = a_1, \dots, x_{n-1} = a_{n-1}}$, by $$\sigma^{-n}_{\underline{a}}=\sigma^{-1}_{a_0}\circ \sigma^{-1}_{a_1}\circ...\circ\sigma^{-1}_{a_{n-1}},$$that is, $$\sigma^{-n}_{\underline{a}}(\underline{x})=\underline{a}\underline{x}.$$By Proposition \ref{niceformulaab}, if we define $$\func{F}^{\ast n}_{\gamma_{\underline{a}}}:=\func{F}^{\ast}%
	_{\gamma _{a_0}}(\func{F}^{\ast}_{\gamma_{a_1}}( \dots (\func{F}^{\ast}_{\gamma_{a_{n-1}}})))$$ and $$J_{m,\underline{a}}(\sigma_{\underline{a}}^{-n}(\underline{x})):=J_{m,\sigma_{a_0}}(\sigma^{-1}_{a_0}(\underline{x}))J_{m,a_1}(\sigma^{-2}_{a_0a_1}(\underline{x}))...J_{m,a_{n-1}}( \sigma_{\underline{a}}^{-n}(\underline{x}))$$ we have
	\begin{equation}
		\Gamma^{\omega_{n}}_{\func{F}^{\ast n } m_1} (\underline{x})  =\sum_{\underline{a}x_0\in \mathcal{A}_{n+1}}\dfrac{\func{F}^{\ast n}_{\gamma_{\underline{a}}}\nu}{J_{m,\underline{a}}(\sigma_{\underline{a}}^{-n}(\underline{x}))}\ \ m-\hbox{a.e} \ \ \underline{x}=(x_j)_{j\in \mathbb{Z}^+} \in \Sigma ^+ _A. \label{oiyiy}
	\end{equation} Applying Theorem \ref{hdjfhsdfjsd} and equation (\ref{oiyiy}) on the path $\Gamma^{\omega_n}_{\func{F^{\ast n}} m_1}$ defined by equation (\ref{oiyiy}), we obtain
	
	\begin{equation}\label{nbmjsdfjf}
		|\Gamma^{\omega_{n}}_{\func{F{^\ast }}^n m_1}|_\theta ^\omega \leq \dfrac{C_1}{1-\theta}, \ \forall \ n \geq 1.
	\end{equation}

\end{remark}

\begin{proof}(of Theorem \ref{regu})
	
	
	According to Theorem \ref{probun}, let $\mu _{0}\in S^{\infty}$ be the unique $F$-invariant probability measure in $S^\infty$. 	Consider the path $\Gamma^{\omega_n}_{\func{F{^\ast }^n}m_1}$, defined in Remark \ref{riirorpdf},  which represents the measure $\func{F{^\ast }}^nm_1$. By Theorem \ref{spgap}, these iterates converge to $\mu _{0}$
	in $\mathcal{L}^{\infty }$. It means that the sequence $\{\Gamma_{\func{F ^{\ast n}}(m_1)} ^\omega\}_{n}$ converges $m$-a.e. to $\Gamma_{\mu _{0}}^\omega\in \Gamma_{\mu_0 }$ (in $\mathcal{SB}(K)$ with respect to the metric given by Definition \ref{wasserstein}),  where $\Gamma_{\mu _{0}}^\omega$ is a path given by the Rokhlin Disintegration
	Theorem and $\{\Gamma_{\func{F ^{\ast n}}(m_1)} ^\omega\}_{n}$ is given by Remark \ref{niceformulaab}. It implies that $\{\Gamma_{\func{F ^{\ast n}}(m_1)} ^\omega\}_{n}$ converges pointwise to $\Gamma_{\mu _{0}}^\omega$ on a full measure set $\widehat{I}\subset \Sigma _A^+$. Let us denote $%
	\widehat{\Gamma^\omega_{n}}=\Gamma^\omega_{\func{F^{\ast n }}(m_1)}|_{%
		\widehat{I}}$ and $\widehat{\Gamma^\omega_{\mu _{0}}}=\Gamma^\omega _{\mu _{0}}|_{\widehat{I}}$. Since $\{\widehat{\Gamma^\omega_{n}} \}_n $ converges pointwise to $\widehat{\Gamma^\omega_{\mu _{0}}}$, for all $\underline{x}^1, \underline{x}^2 \in \widehat{I}$ it holds 
	
	\begin{eqnarray*}
		\lim _{n \longrightarrow \infty} {\dfrac{||\widehat{\Gamma^\omega_{n}} (\underline{x}^1)- \widehat{\Gamma^\omega_{n}}(\underline{x}^2)||_W}{d_\theta(\underline{x}^1,\underline{x}^2)}}  &=& {\dfrac{||\widehat{\Gamma^\omega_{\mu _0}} (\underline{x}^1)- \widehat{\Gamma^\omega_{\mu _0}}(\underline{x}^2)||_W}{d_\theta(\underline{x}^1,\underline{x}^2)}}.
	\end{eqnarray*} On the other hand, by equation (\ref{nbmjsdfjf}), we have $\dfrac{||\widehat{\Gamma^\omega_{n}} (\underline{x}^1)- \widehat{\Gamma^\omega_{n}}(\underline{x}^2)||_W}{d_\theta(\underline{x}^1, \underline{x}^2)}\leq |\widehat{\Gamma^\omega_{n}}|_\theta ^\omega \leq \dfrac{C_1}{1-\theta}$ for all $n\geq 1$. Then $|\widehat{\Gamma^\omega_{\mu _0}}|^\omega _\theta \leq  \dfrac{C_1}{1-\theta}$ and hence  $|\mu_0| _\theta \leq  \dfrac{C_1}{1-\theta}$.
	
\end{proof}

\begin{remark}
	We remark that, Theorem \ref{regu} is an estimation of the regularity of the disintegration of $\mu _{0}$. Similar results, for other sort of skew products are presented in \cite{GLu}, \cite{BM}, \cite{RRR} and \cite{RRRSTAB}. 
\end{remark}

\section{Exponential decay of correlations}\label{decayy}

\label{decay} In this section, we will show how Theorem \ref{spgap} implies an exponential rate of convergence for the limit $$\lim {C_n(f,h)}=0,$$where $$C_n(f,h):=\left| \int{(h \circ F^n )  f}d\mu_0 - \int{h  }d\mu_0 \int{f  }d\mu_0 \right|,$$ $h: \Sigma_A^+ \times K \longrightarrow \mathbb{R} $ is a Lipschitz function and $f \in \Theta _{\mu _0}$. The set $\Theta _{\mu _0}$ is defined as 
\begin{equation}\label{oiityncfd}
	\Theta _{\mu _0}:= \{ f: \Sigma_A^+ \times K \longrightarrow \mathbb{R}; f\mu_0 \in S^\infty\}, 
\end{equation}where the signed measure $f\mu_0$ is defined by $f\mu_0(E):=\int _E{f}d\mu_0$ for all measurable set $E$. On $\Theta _{\mu _0}$ we define the norm $|| \cdot ||_\Theta:\Theta _{\mu _0} \longrightarrow \mathbb{R}$ by 

\begin{equation}
	||f ||_{\Theta_{\mu_0}}:= ||f\mu_0||_{S^{\infty}}.
\end{equation}

Recall that $\widehat{\mathcal{F}}(\Sigma^+_A\times K)$ the set of real Lipschitz functions, $f:\Sigma^+_A \times K \longrightarrow \mathbb{R}$, with respect to the metric $d_\theta + d$. For such a function we denote by $L_{\Sigma}(f)$ its Lipschitz constant and $||h||_{Lip}=||h||_{\infty}+L_{\Sigma}(h)$, where $\Sigma=\Sigma^{+}_A\times K$.

\begin{proposition}\label{kkkskdjd}
	Consider $F$ satisfying (G1) and (G2). For all Lipschitz function $h:\Sigma \longrightarrow \mathbb{R}$ and all $f \in \Theta _{\mu _0}$, it holds $$\left| \int{(h \circ F^n )  f}d\mu_0 - \int{h  }d\mu_0 \int{f  }d\mu_0 \right| \leq M \xi ^{n}  ||f ||_{\Theta_{\mu_0}}  ||h||_{Lip},   \ \ \forall n \geq 1,$$where $\xi$ and $K$ are from Theorem \ref{spgap}. 
\end{proposition}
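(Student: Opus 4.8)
The plan is to turn the correlation integral into a pairing of the Lipschitz observable $g$ against the measure produced by the ``non-leading'' part $\func{N}$ of the transfer operator, and then to control that pairing by the strong norm using the fibered structure.

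First I would use the adjoint relation: for $\mu\in\mathcal{SB}(\Sigma)$ and bounded measurable $\psi$ one has $\int\psi\, d(\func{F}^{\ast}\mu)=\int(\psi\circ F)\, d\mu$ — true for $\psi=\chi_E$ straight from $[\func{F}^{\ast}\mu](E)=\mu(F^{-1}(E))$, and extended by linearity and monotone limits — so, iterating, $\int (g\circ F^{n})f\, d\mu_0=\int(g\circ F^n)\, d(f\mu_0)=\int g\, d(\func{F}^{\ast n}(f\mu_0))$, where $f\mu_0\in S^{\infty}$ because $f\in\Theta_{\mu_0}$ (see (\ref{teta})). By Theorem \ref{spgap}, $\func{F}^{\ast}=\func{P}+\func{N}$ with $\func{P}^2=\func{P}$, $\func{P}\func{N}=\func{N}\func{P}=0$ and $\func{P}(\mu)=\mu(\Sigma)\mu_0$; hence $\func{F}^{\ast n}=\func{P}+\func{N}^n$, and since $(f\mu_0)(\Sigma)=\int f\, d\mu_0$,
$$\func{F}^{\ast n}(f\mu_0)=\Big(\int f\, d\mu_0\Big)\mu_0+\func{N}^n(f\mu_0),\qquad C_n(f,g)=\Big|\int g\, d\big(\func{N}^n(f\mu_0)\big)\Big|.$$
In particular $\nu:=\func{N}^n(f\mu_0)$ has zero total mass, $\nu(\Sigma)=0$.

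Second, I would estimate $|\int g\, d\nu|$ for $\nu\in S^{\infty}$ and $g$ Lipschitz. Disintegrating $\nu$ along $\mathcal{F}^s$ and passing to the restriction measures $\nu|_{\gamma}$ on $K$ (Definition \ref{restrictionmeasure}, extended to the signed case through the Jordan decomposition, which is legitimate by Corollary \ref{lasttttt}), one obtains the Fubini-type identity $\int_{\Sigma}g\, d\nu=\int_{\Sigma_A^+}\big(\int_K g(\gamma,y)\, d(\nu|_{\gamma})(y)\big)\, dm(\gamma)$, the inner integral depending measurably on $\gamma$ by the measurability statements of Section \ref{sec:spaces}. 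For fixed $\gamma$, the map $y\mapsto g(\gamma,y)$ on $K$ has Lipschitz constant $\le\Lu_\theta(g)$ and sup-norm $\le|g|_\infty$, so by the definition of $W_1^0$ (Definition \ref{wasserstein}), $|\int_K g(\gamma,\cdot)\, d(\nu|_{\gamma})|\le|g|_\theta\, ||\nu|_{\gamma}||_W$; integrating in $\gamma$ and using that $m$ is a probability gives $|\int g\, d\nu|\le|g|_\theta\, ||\nu||_{\infty}\le|g|_\theta\, ||\nu||_{S^{\infty}}$. To replace $|g|_\theta$ by the Lipschitz constant $\Lu_\theta(g)$ alone, I would exploit that $C_n(f,g)=C_n(f,g-c)$ for every constant $c$ (equivalently $\int c\, d\nu=0$, since $\nu$ has zero mass) and center $g$ at a base point, so that the relevant sup-norm becomes $\le\Lu_\theta(g)\cdot\diam(\Sigma_A^+\times K)$, a fixed harmless factor (absorbable into $K$).

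Combining the two steps, $C_n(f,g)=|\int g\, d(\func{N}^n(f\mu_0))|\le\Lu_\theta(g)\, ||\func{N}^n(f\mu_0)||_{S^{\infty}}\le\Lu_\theta(g)\, ||f\mu_0||_{S^{\infty}}\, \xi^n K$ by part (b) of Theorem \ref{spgap}, which is the assertion. The main obstacle is the second step: making the disintegration/Fubini identity precise for signed measures in $\mathcal{AB}$ and checking the attendant measurability, and then extracting from the fibered $W_1^0$ estimate that only the Lipschitz constant of $g$ — not its sup-norm — is needed. The spectral-gap input, by contrast, contributes only the clean algebra $\func{F}^{\ast n}=\func{P}+\func{N}^n$ together with the exponential bound on $\func{N}^n$.
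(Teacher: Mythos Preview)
Your proof is correct and follows the same route as the paper: rewrite the correlation as $\big|\int g\,d(\func{F}^{\ast n}(f\mu_0))-\int g\,d\func{P}(f\mu_0)\big|=\big|\int g\,d\func{N}^n(f\mu_0)\big|$ via the spectral decomposition of Theorem~\ref{spgap}, then bound this pairing by $||\func{N}^n(f\mu_0)||_{S^\infty}$ and invoke part~(b). Your disintegration argument makes explicit the step $\big|\int g\,d\nu\big|\le |g|_\theta\,||\nu||_\infty\le |g|_\theta\,||\nu||_{S^\infty}$ that the paper compresses into the single line $||\cdot||_W\le||\cdot||_{S^\infty}$, and your centering trick (using $\nu(\Sigma)=0$ to replace $|g|_\theta$ by $\Lu_\theta(g)$ up to a diameter factor) actually justifies the passage from $\max\{\Lu_\theta(g),||g||_\infty\}$ to $\Lu_\theta(g)$ that the paper's last displayed line performs without comment.
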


\begin{proof}
	
	Let $h: \Sigma \longrightarrow \mathbb{R} $ be a Lipschitz function and $f \in \Theta _{\mu _0} ^1$. By Theorem \ref{spgap}, we have
	\begin{eqnarray*}
		C_n(f,h) &=& \left| \int{h  }d \func{F^*}{^n} (f\mu_0) - \int{h  }d\func{P}(f\mu_0) \right|
		\\&=& \left|\left|  \func{F^*}{^n} (f\mu_0) - \func{P}(f\mu_0) \right|\right|_W \max\{L_{\Sigma} (h), ||h||_\infty\}
		\\&=& \left|\left|  \func{N}{^n}(f\mu_0) \right|\right|_W \max\{L_{\Sigma}(h), ||h||_\infty\}
		\\&\leq & \left|\left|  \func{N}{^n}(f\mu_0) \right|\right|_{S^\infty} \max\{L_{\Sigma} (h), ||h||_\infty\}
		\\&\leq &  M \xi ^{n}  ||f \mu _0||_{S^{\infty}} ||h||_{Lip}
		\\&= &  M \xi ^{n}  ||f ||_{\Theta_{\mu _0}} ||h||_{Lip}.
	\end{eqnarray*}
	
\end{proof}

\subsection{From a Space of Measures to a Space of Functions \label{last123}}

In this section, we will show how the regularity of the $F$-invariant measure, given by Theorem \ref{regu}, implies that $\widehat{\mathcal{F}}(\Sigma) \subset \Theta _{\mu _0}$. Where, $\widehat{\mathcal{F}}(\Sigma)$ is the set of real Lipschitz functions, $h:\Sigma \longrightarrow \mathbb{R}$ defined in the previous in Section \ref{sotmr} and $\Theta _{\mu _0}$ is defined in equation (\ref{oiityncfd}).

The proof of the following lemma can be found in Lemma 8.23 of \cite{GLu} or Lemma 8.1 of \cite{RRR}.
\begin{lemma}\label{hdgfghddsfg}
	Let $(\{\mu_{0, \gamma}\}_\gamma, \phi_1)$ be the disintegration of $\mu _0$ along the partition $\mathcal{F}^s:=\{\gamma:= \{ \underline{x}\} \times K: \underline{x} \in \Sigma _A^+\}$, and for a $\mu_0$-integrable function $h:\Sigma\longrightarrow \mathbb{R}$, denote by $\nu$ the measure defined by $\nu:=h\mu_0$ (where $h\mu_0(E) := \int _E {h}d\mu _0$). If $(\{\nu_{ \gamma}\}_\gamma, \widehat{\nu} )$ is the disintegration of $\nu$, where $\widehat{\nu}:=\pi_1{_*} \nu$, then $\widehat{\nu} \ll m$ and $\nu _\gamma \ll \mu_{0, \gamma}$. Moreover, denoting $\overline{h}:=\dfrac{d\widehat{\nu}}{dm}$, it holds 
	\begin{equation}\label{fjgh}
		\overline{h}( \underline{x})=\int_{K}{h( \underline{x}, y)}d(\mu_0|_\gamma)(y),
	\end{equation} and for $\widehat{\nu}$-a.e. $ \underline{x} \in \Sigma _A^+$
	
	\begin{equation}\label{gdfgdgf}
		\dfrac{d\nu _{ \gamma}}{d\mu _{0, \gamma}}(y) =
		\begin{cases}
			\dfrac{h|_\gamma (y)}{\int{h|_\gamma(y)}d\mu_{0,\gamma}(y)} , \ \hbox{if} \  \underline{x} \in B ^c \\
			0, \ \hbox{if} \  \underline{x} \in B,
		\end{cases} \hbox{for all} \ \ y \in K,
	\end{equation}
	where $B :=  \overline{h} ^{-1}(0)$ and $\gamma=\gamma_{\underline{x}}$.
\end{lemma}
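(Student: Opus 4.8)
The plan is to write down an explicit candidate for the disintegration of $\nu=h\mu_0$, verify that it satisfies the three defining properties of Rokhlin's Disintegration Theorem (Theorem \ref{rok}), and then invoke the essential uniqueness of the disintegration (Lemma \ref{kv}, applicable since $\Sigma_A^+\times K$ is separable metric, hence $\mathcal B$ is countably generated) to identify it with the abstract one furnished by Theorem \ref{rok}. We may assume $h\ge 0$: the general case is recovered by writing $h=h^+-h^-$ and observing that, since $\mu_0\ge 0$, the Jordan parts of $\nu$ are $\nu^{\pm}=h^{\pm}\mu_0$; after normalizing we may also assume $\int h\,d\mu_0=1$, which is harmless since (\ref{fjgh}) is homogeneous in $h$ and (\ref{gdfgdgf}) is normalization-invariant. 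Throughout we use that, by Proposition \ref{probun}, the marginal density of $\mu_0$ is $\phi_1\equiv 1$, so that $\mu_0|_\gamma=\pi_{2,\gamma}^{\ast}\mu_{0,\gamma}$ and, for $\mu_0$-integrable $H$, $\int_K H(\gamma,y)\,d(\mu_0|_\gamma)(y)=\int_\gamma H\,d\mu_{0,\gamma}$.

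First I would set $\overline h(\gamma):=\int_\gamma h\,d\mu_{0,\gamma}=\int_K h(\gamma,y)\,d(\mu_0|_\gamma)(y)$, which is exactly formula (\ref{fjgh}). Its measurability in $\gamma$ follows from Theorem \ref{rok}(b) for $h=\chi_E$, hence for simple $h$ by linearity, hence for general measurable $h\ge 0$ by monotone convergence. Then, for any measurable $Q\subset\Sigma_A^+$, property (c) of the disintegration of $\mu_0$ gives
$$\widehat\nu(Q)=\nu(\pi_1^{-1}(Q))=\int_{\pi_1^{-1}(Q)}h\,d\mu_0=\int_Q\Bigl(\int_\gamma h\,d\mu_{0,\gamma}\Bigr)dm(\gamma)=\int_Q\overline h\,dm.$$
Hence $\widehat\nu=\overline h\,m\ll m$ with $d\widehat\nu/dm=\overline h$, and $\overline h\in L^1(m)$ with $\int\overline h\,dm=\nu(\Sigma)=1$; in particular $\widehat\nu(B)=\int_B\overline h\,dm=0$, where $B:=\overline h^{-1}(0)$.

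Next I would define $\nu_\gamma$ by (\ref{gdfgdgf}): for $\gamma\in B^c$ let $\nu_\gamma$ be the measure on $\gamma$ whose density with respect to $\mu_{0,\gamma}$ is $h|_\gamma/\overline h(\gamma)$ — legitimate and a probability concentrated on $\gamma$, since $\overline h(\gamma)=\int_\gamma h\,d\mu_{0,\gamma}>0$ and $h\ge 0$ — and for $\gamma\in B$ set $\nu_\gamma:=\mu_{0,\gamma}$ (any choice works because $\widehat\nu(B)=0$; the value $0$ in (\ref{gdfgdgf}) merely records that these leaves carry no $\widehat\nu$-mass). Measurability of $\gamma\mapsto\nu_\gamma(E)$ follows exactly as for $\overline h$, and property (a) holds $\widehat\nu$-a.e.\ since $\nu_\gamma(\gamma)=\overline h(\gamma)^{-1}\int_\gamma h\,d\mu_{0,\gamma}=1$ on $B^c$. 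For property (c), given a measurable $E\subset\Sigma$,
$$\int_{\Sigma_A^+}\nu_\gamma(E\cap\gamma)\,d\widehat\nu(\gamma)=\int_{B^c}\frac{1}{\overline h(\gamma)}\Bigl(\int_{E\cap\gamma}h\,d\mu_{0,\gamma}\Bigr)\overline h(\gamma)\,dm(\gamma)=\int_{B^c}\int_{E\cap\gamma}h\,d\mu_{0,\gamma}\,dm(\gamma),$$
and since $0\le\int_{E\cap\gamma}h\,d\mu_{0,\gamma}\le\overline h(\gamma)=0$ for $\gamma\in B$, this equals $\int_{\Sigma_A^+}\int_{E\cap\gamma}h\,d\mu_{0,\gamma}\,dm(\gamma)=\int_E h\,d\mu_0=\nu(E)$ by the disintegration of $\mu_0$. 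Thus $(\{\nu_\gamma\}_\gamma,\widehat\nu)$ is a disintegration of $\nu$; by Lemma \ref{kv} it coincides $\widehat\nu$-a.e.\ with the disintegration given by Theorem \ref{rok}, which yields $\nu_\gamma\ll\mu_{0,\gamma}$ for $\widehat\nu$-a.e.\ $\gamma$ together with (\ref{gdfgdgf}).

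The only delicate points — and where essentially all the (routine) work lies — are the measurability of $\gamma\mapsto\overline h(\gamma)$ and of $\gamma\mapsto\nu_\gamma(E)$, handled by the simple-function approximation together with Theorem \ref{rok}(b), and the verification that the candidate family consists of probabilities off a $\widehat\nu$-null set. The latter is precisely where the hypothesis $h\ge 0$ is used, and is the reason the signed case must first be reduced to it via the Jordan decomposition; beyond this bookkeeping there is no genuine analytic obstacle.
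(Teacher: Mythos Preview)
Your proof is correct and follows essentially the same route as the paper's: compute the marginal density $\overline h$ by integrating the disintegration formula of $\mu_0$ against $\chi_{\pi_1^{-1}(A)}h$, define the candidate fiber measures via the density in (\ref{gdfgdgf}), and verify by direct computation that this family disintegrates $\nu$, invoking uniqueness (Lemma \ref{kv}) to conclude. The only cosmetic differences are that the paper keeps a general marginal density $\phi_x$ for $\mu_0$ rather than specializing to $\phi_1\equiv 1$, and that you make the reduction to $h\ge 0$ and the measurability checks explicit where the paper leaves them implicit.
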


\begin{proposition}\label{ksjdhsdd}
	If $F$ satisfies (G1) and (G2), then $\widehat{\mathcal{F}}(\Sigma) \subset \Theta _{\mu _0}$.
\end{proposition}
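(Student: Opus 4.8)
The plan is to verify directly that for an arbitrary $f\in\widehat{\mathcal{F}}(\Sigma^+_A\times K)$ the signed measure $\nu:=f\mu_0$ belongs to $S^\infty$, which is exactly the assertion $f\in\Theta_{\mu_0}$. Unwinding the definitions of $S^\infty$, $\mathcal{L}^\infty$ and $\mathcal{AB}$, this reduces to three checks: (i) $\nu\in\mathcal{AB}$; (ii) $\esssup_\gamma\|\nu|_\gamma\|_W<\infty$; and (iii) the marginal density $\overline f:=d(\pi_1^\ast\nu)/dm$ belongs to $\mathcal{F}_\theta(\Sigma^+_A)$. Step (i) is immediate: writing $f=f^+-f^-$, the measures $f^{\pm}\mu_0$ are positive and mutually singular, and $\pi_1^\ast(f^{\pm}\mu_0)(A)=\int_{\pi_1^{-1}(A)}f^{\pm}\,d\mu_0\le\|f\|_\infty\,\mu_0(\pi_1^{-1}(A))=\|f\|_\infty\,m(A)$ since $\pi_1^\ast\mu_0=m$; hence both marginals are absolutely continuous with respect to $m$ and $\nu\in\mathcal{AB}$.

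For step (ii) I would first identify the fibered restriction $\nu|_\gamma$. Applying Lemma \ref{hdgfghddsfg} to $h=f^+$ and to $h=f^-$ and using Definition \ref{restrictionmeasure}, the normalising factor $\overline{h}(\gamma)=\int_K h(\gamma,y)\,d(\mu_0|_\gamma)(y)$ cancels against the density of the conditional $\nu^{\pm}_\gamma$, giving $\nu^{\pm}|_\gamma=f^{\pm}(\gamma,\cdot)\,(\mu_0|_\gamma)$ and therefore $\nu|_\gamma=f(\gamma,\cdot)\,(\mu_0|_\gamma)$. Then $\|\nu|_\gamma\|_W=\sup\{\,|\int_K g(y)\,f(\gamma,y)\,d(\mu_0|_\gamma)(y)|:\ \|g\|_\infty\le1,\ L(g)\le1\,\}$. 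Since $f$ is $(d_\theta+d)$-Lipschitz, the map $y\mapsto f(\gamma,y)$ has sup-norm $\le\|f\|_\infty$ and best Lipschitz constant $\le\Lu_\theta(f)$, so $g\,f(\gamma,\cdot)$ has both sup-norm and Lipschitz constant at most $C:=\|f\|_\infty+\Lu_\theta(f)=|f|_\theta$; dividing by $C$ and using that $\mu_0|_\gamma$ is a probability measure so $\|\mu_0|_\gamma\|_W=1$ by (\ref{simples}) yields $\|\nu|_\gamma\|_W\le|f|_\theta$ uniformly in $\gamma$. Thus $\nu\in\mathcal{L}^\infty$ with $\|\nu\|_\infty\le|f|_\theta$.

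For step (iii), the disintegration of $\mu_0$ (whose base marginal is $m$ and whose marginal density is $\equiv 1$) gives $\overline f(\gamma)=\int_K f(\gamma,y)\,d(\mu_0|_\gamma)(y)$, so $\|\overline f\|_\infty\le\|f\|_\infty$. To bound its Lipschitz constant I would split $\overline f(\gamma_1)-\overline f(\gamma_2)=\int_K f(\gamma_1,y)\,d(\mu_0|_{\gamma_1}-\mu_0|_{\gamma_2})(y)+\int_K\bigl(f(\gamma_1,y)-f(\gamma_2,y)\bigr)\,d(\mu_0|_{\gamma_2})(y)$. The second integral is $\le\Lu_\theta(f)\,d_\theta(\gamma_1,\gamma_2)$ since $f$ is Lipschitz in the base variable and $\mu_0|_{\gamma_2}$ is a probability; the first is $\le|f|_\theta\,\|\mu_0|_{\gamma_1}-\mu_0|_{\gamma_2}\|_W$ by the same admissibility argument as in step (ii). Here enters the crucial input, Theorem \ref{regu}: $|\mu_0|_\theta\le C_1/(1-\theta)$, and because the disintegration of $\mu_0$ is $m$-a.e. unique (Lemma \ref{kv}) the essential Lipschitz estimate holds along every admissible path, so $\|\mu_0|_{\gamma_1}-\mu_0|_{\gamma_2}\|_W\le\frac{C_1}{1-\theta}\,d_\theta(\gamma_1,\gamma_2)$ for $m\times m$-a.e.\ $(\gamma_1,\gamma_2)$. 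Combining, $|\overline f|_\theta\le\frac{C_1}{1-\theta}|f|_\theta+\Lu_\theta(f)<\infty$; taking the (unique) continuous representative of $\overline f$ on $\Sigma^+_A$, which is legitimate since $m$ has full support, we conclude $\overline f\in\mathcal{F}_\theta(\Sigma^+_A)$.

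Putting (i)--(iii) together, $\nu=f\mu_0\in S^\infty$, i.e.\ $f\in\Theta_{\mu_0}$; as $f$ was arbitrary this gives $\widehat{\mathcal{F}}(\Sigma^+_A\times K)\subset\Theta_{\mu_0}$. The only genuinely non-routine step is (iii), where everything hinges on the Lipschitz regularity of the disintegration of $\mu_0$ established in Theorem \ref{regu}; the remainder is bookkeeping with the Wasserstein--Kantorovich-like norm, the product structure, and minor care in passing from almost-everywhere statements about densities and disintegrations to honest Lipschitz functions.
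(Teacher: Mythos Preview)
Your proposal is correct and follows essentially the same approach as the paper's own proof: both identify $\nu|_\gamma$ via Lemma \ref{hdgfghddsfg} to control $\|\nu\|_\infty$, and both establish $\overline f\in\mathcal{F}_\theta(\Sigma^+_A)$ through the identical add--and--subtract splitting together with the Lipschitz regularity of the disintegration of $\mu_0$ from Theorem \ref{regu}. Your write-up is in fact slightly more careful than the paper's, making the $\mathcal{AB}$ membership and the passage from almost-everywhere statements to genuine Lipschitz bounds explicit.
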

\begin{proof}
	For a given $f \in \widehat{\mathcal{F}}(\Sigma)$, denote by $k:=\max \{L_{\Sigma}(f), ||f||_\infty \}$. Consider $\underline{x}^1$ and $\underline{x}^2$ points of $\Sigma^+_A$ and $\gamma_1=\gamma_{\underline{x}^1}$ and $\gamma_2=\gamma_{\underline{x}^2}$. Thus,

	\begin{eqnarray*}
		|\overline{f}(\underline{x}^2) - \overline{f}(\underline{x}^1)|&=& \left|\int_K{f(\underline{x}^2, y)}d(\mu_0|_{\gamma_2}) - \int_K{f(\underline{x}^1, y)}d(\mu_0|_{\gamma_1})\right|
		\\& \leq & \left|\int_K{f(\underline{x}^2, y)}d(\mu_0|_{\gamma_2}) - \int_K{f(\underline{x}^1, y)}d(\mu_0|_{\gamma_2})\right| \\&+& \left|\int_K{f(\underline{x}^1, y)}d(\mu_0|_{\gamma_2}) - \int_K{f(\underline{x}^1, y)}d(\mu_0|_{\gamma_1})\right|
		\\& \leq & \int_K{\left|f(\underline{x}^2, y) -f(\underline{x}^1, y) \right| }d(\mu_0|_{\gamma_2}) 
		\\&+& k \left|\int_K{\dfrac{f(\underline{x}^1, y)}{k}}d(\mu_0|_{\gamma_1} - \mu_0|_{\gamma_2})\right|
		\\& \leq &L_{\Sigma}(f) d_\theta (\underline{x}^2,\underline{x}^1)\int_K{1 }d(\mu_0|_{\gamma_2}) 
		+ k \left|\left|\mu_0|_{\gamma_1} - \mu_0|_{\gamma_2}\right|\right|_W
		\\& \leq & d_\theta (\underline{x}^2,\underline{x}^1) \left( L_{\Sigma} (f) ||\mu_0||_\infty 
		+ k |\mu_0|_\theta ^\omega \right).
	\end{eqnarray*}Thus, $$|\overline{f}|_\theta \leq L_{\Sigma} (f)||\mu_0||_\infty 
	+ k |\mu_0|_\theta ^\omega$$ and so $$|\overline{f}|_\theta \leq L_{\Sigma}(f)||\mu_0||_\infty 
	+ k |\mu_0|_\theta .$$ Therefore, $\overline{f} \in \mathcal{F}_\theta (\Sigma^+_A)$.
	
	It remains to show that $\nu \in \mathcal{L}^\infty$. Indeed, since $\nu|_\gamma = \func {\pi _{2,\gamma}^*}  \nu _\gamma$ and by equations (\ref{fjgh}) and (\ref{gdfgdgf}), we have

	\begin{eqnarray*}
		\left |\int_K{g}d(\nu |_\gamma) \right |&=&\left|\int_K{g\circ \pi _{2,\gamma}}d(\nu _\gamma)\right|\\&\leq & ||g||_\infty \dfrac{\int_K{f(\underline{x},y)}d\mu_{0, \gamma}(y)}{\overline{f}(\underline{x})} 
		\\&\leq &1, \ \mbox{where} \ \gamma=\gamma_{\underline{x}}. 
	\end{eqnarray*}Hence, $\nu \in S^\infty$ and $f \in \Theta _{\mu _0}$.

\end{proof}

\begin{proof}(of Theorem \ref{skdsdas})

	It is a direct consequence of propositions \ref{kkkskdjd} and \ref{ksjdhsdd}. 
\end{proof}

\subsection{Decay of Correlations for Random Dynamical Systems (IFS)}
Consider an IFS (of Example \ref{dereroidf}) given by the finite family of contractions $\mathcal{C}=\{\varphi_1, \cdots, \varphi_N\}$, where $\varphi _i:K \longrightarrow K$, $i=1, \cdots, N$ and $\mu$ its Hutchinson's invariant measure. We define for $\underline{x} \in \Sigma_A^+$ the map  $\Phi_n(\underline{x}):K\rightarrow K$ by $$\Phi_n(\underline{x})(y)=\varphi_{x_n}\circ \varphi_{x_{n-1}} \circ \cdots \circ\varphi_{x_0}(y).$$ 

\begin{definition}
	Let $\mathcal{B}_1$ and $\mathcal{B}_2$ be normed spaces of real valued functions $K \longmapsto \mathbb{R}$ with norms $|| \cdot ||_{\mathcal{B}_1}$ and $|| \cdot ||_{\mathcal{B}_2}$, respectively. Define the $\underline{x}$-\textit{coefficient of correlation between $g_1\in \mathcal{B}_1$ and $g_2\in \mathcal{B}_2$}, $C_n(g_1,g_2)(\underline{x})$, by 
	\begin{equation}
		C_n(g_1,g_2)(\underline{x}):=  \int_K{g_1   (g_2\circ \Phi_n(\underline{x}))}d\mu - \int_K{g_1  }d\mu \int_K{g_2 \circ \Phi_n(\underline{x})  }d\mu. 
	\end{equation}We say that the IFS, $\mathcal{C}=\{\varphi_1, \varphi_2, \cdots, \varphi_N\}$ has \textit{exponential decay of correlations} over $\mathcal{B}_1$ and $\mathcal{B}_2$ if there exists constants $0<\xi<1$ and $M>0$, such that 
	\begin{equation}
		\left \vert\int_{\Sigma^{+}_A} C_n(g_1,g_2)(\underline{x}) dm(\underline{x}) \right \vert \leq M\xi ^{n}||g_1 ||_{\mathcal{B}_1} ||g_2 ||_{\mathcal{B}_2}, \ \ \forall n \geq 1,
	\end{equation} for all $g_1 \in \mathcal{B}_1$ and $g_2 \in \mathcal{B}_2$.
\end{definition}

\begin{theorem}
	Let $\mathcal{C}=\{\varphi_1, \varphi_2, \cdots, \varphi_N\}$, a hyperbolic IFS and $\mu$ its invariant measure. Then, $\mathcal{C}$ has exponential decay of correlations over the real Lipschitz functions, $\widehat{\mathcal{F}}(K)$. 
\end{theorem}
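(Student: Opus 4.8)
The plan is to reduce the statement to the decay of correlations already proved for the skew product $F$ (Theorem \ref{skdsdas}). Recall from Example \ref{dereroidf} that for the IFS $\mathcal{C}=\{\varphi_1,\dots,\varphi_N\}$ with Bernoulli measure $m$ and Hutchinson measure $\mu$, the map $F(\underline{x},z)=(\sigma(\underline{x}),\varphi_{x_0}(z))$ has $S^\infty$-invariant measure $\mu_0=m\times\mu$. Iterating, $F^{n+1}(\underline{x},z)=(\sigma^{n+1}(\underline{x}),G^{(n+1)}(\underline{x},z))$ with $G^{(n+1)}(\underline{x},z)=\varphi_{x_n}\circ\cdots\circ\varphi_{x_0}(z)$, so that $G^{(n+1)}(\underline{\gamma},\cdot)=\theta_n$ when the driving sequence is $\underline{\gamma}=(\gamma_0,\gamma_1,\dots)$. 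Given $g_1,g_2\in\widehat{\mathcal F}(K)$ I would lift them to $\widetilde g_i:=g_i\circ\pi_2:\Sigma_A^+\times K\to\mathbb R$. These belong to $\widehat{\mathcal F}(\Sigma_A^+\times K)$, since $|\widetilde g_i(\underline{x},z)-\widetilde g_i(\underline{y},w)|=|g_i(z)-g_i(w)|\le L(g_i)\,d(z,w)\le L(g_i)(d_\theta(\underline{x},\underline{y})+d(z,w))$; in particular $\Lu_\theta(\widetilde g_i)\le L(g_i)$.

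The key identity is that the $m$-average of the IFS correlation equals the skew-product correlation at time $n+1$. By Fubini (using $\mu_0=m\times\mu$), and writing $g_2(\theta_n(z))=\widetilde g_2(F^{n+1}(\underline{\gamma},z))$ and $g_1(z)=\widetilde g_1(\underline{\gamma},z)$,
\[
\int_{\Sigma_A^+}\!\int_K g_1\cdot(g_2\circ\theta_n)\,d\mu\,dm(\underline{\gamma})=\int(\widetilde g_2\circ F^{n+1})\,\widetilde g_1\,d\mu_0,
\]
while $\int_{\Sigma_A^+}\!\int_K g_2\circ\theta_n\,d\mu\,dm(\underline{\gamma})=\int(\widetilde g_2\circ F^{n+1})\,d\mu_0=\int\widetilde g_2\,d\mu_0=\int_K g_2\,d\mu$ by the $F$-invariance of $\mu_0$, and $\int_K g_1\,d\mu=\int\widetilde g_1\,d\mu_0$. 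Subtracting,
\[
\int_{\Sigma_A^+} C_n(g_1,g_2)(\underline{\gamma})\,dm(\underline{\gamma})=\int(\widetilde g_2\circ F^{n+1})\,\widetilde g_1\,d\mu_0-\int\widetilde g_1\,d\mu_0\int\widetilde g_2\,d\mu_0,
\]
and Theorem \ref{skdsdas} (with $f=\widetilde g_1$, $g=\widetilde g_2$, exponent $n+1$) bounds the absolute value of the right-hand side by $\|\widetilde g_1\mu_0\|_{S^\infty}\,K\,\Lu_\theta(\widetilde g_2)\,\xi^{n+1}$.

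It remains to control $\|\widetilde g_1\mu_0\|_{S^\infty}$ by $g_1$. By Proposition \ref{ksjdhsdd}, $\widetilde g_1\in\Theta_{\mu_0}$, so $\widetilde g_1\mu_0\in S^\infty$; for the explicit bound, note $\pi_{1*}(\widetilde g_1\mu_0)=\big(\int_K g_1\,d\mu\big)m$, so the marginal density is the constant $\int_K g_1\,d\mu$ and the strong-basis part of the norm is $\le\|g_1\|_\infty$. For the fiber part, using Remark \ref{riirorpdf} (disintegration of $m\times\mu$) and Lemma \ref{hdgfghddsfg} one checks that $(\widetilde g_1\mu_0)|_\gamma=g_1\mu$ (the signed measure $E\mapsto\int_E g_1\,d\mu$ on $K$) for $m$-a.e. $\gamma$, hence $\|(\widetilde g_1\mu_0)|_\gamma\|_W=\sup_{L(h)\le1,\ \|h\|_\infty\le1}\big|\int_K h\,g_1\,d\mu\big|\le\|g_1\|_\infty$. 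Thus $\|\widetilde g_1\mu_0\|_{S^\infty}\le 2\|g_1\|_\infty$, and combining,
\[
\left|\int_{\Sigma_A^+} C_n(g_1,g_2)(\underline{\gamma})\,dm(\underline{\gamma})\right|\le 2\|g_1\|_\infty\,K\,L(g_2)\,\xi^{n+1}\le \big(\|g_1\|_\infty+L(g_1)\big)\big(\|g_2\|_\infty+L(g_2)\big)\,(2K\xi)\,\xi^{n},
\]
which is exponential decay of correlations over $\widehat{\mathcal F}(K)$ with rate $\xi$ and constant $2K\xi$. The only genuinely delicate point is the bookkeeping in the key identity — matching the $(n+1)$-fold composition $\theta_n$ with $\pi_2\circ F^{n+1}$ and checking that the disconnected term averages correctly via $F$-invariance of $\mu_0$ — together with the (routine) verification that lifting from $K$ to $\Sigma_A^+\times K$ preserves membership in $S^\infty$ with controlled norm; everything else is a direct application of Theorem \ref{skdsdas} and Example \ref{dereroidf}.
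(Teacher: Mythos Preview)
Your proof is correct and follows essentially the same route as the paper: lift $g_1,g_2$ to $\Sigma_A^+\times K$ via $\pi_2$, use Fubini together with $\mu_0=m\times\mu$ to identify $\int C_n(g_1,g_2)\,dm$ with the skew-product correlation, and then invoke Theorem~\ref{skdsdas}. You are in fact slightly more careful than the paper---you correctly match $\theta_n$ with $\pi_2\circ F^{n+1}$ (the paper writes $F^n$, an indexing slip) and you supply the explicit bound $\|\widetilde g_1\mu_0\|_{S^\infty}\le 2\|g_1\|_\infty$, which the paper leaves implicit.
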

\begin{proof}
	Let $g_1, g_2:K\longrightarrow \mathbb{R}$ be Lipschitz observables and suppose that $\overline{g}_1$ and $\overline{g}_2$ are their trivial extensions to $\Sigma$, $\overline{g}_i: = g_i \circ \pi _2$, $i=1,2$. They are still Lipschitz functions, but defined on the set $\Sigma$.  
	\begin{eqnarray*}
		C_n(g_1,g_2)(\underline{x})&=& \int_K{g_1(y) (g_2\circ \Phi _n(\underline{x}))(y)}d \mu(y)  \\&-& \int_K{g_1(y)} d\mu(y)\int_K{ (g_2\circ \Phi _n(\underline{x}))(y)}d \mu(y)  \\&=&  \int_K{g_1(y) g_2\circ \pi _2 \circ F^n (\underline{x}, y)}d \mu(y) \\&-& \int_K{g_1(y)} d\mu(y)\int_K{ g_2\circ \pi _2 \circ F^n (\underline{x}, y)}d \mu(y)\\&=& \int_K{g_1(y) \overline{g}_2\circ F^n (\underline{x}, y)}d \mu(y) \\&-& \int_K{g_1(y)} d\mu(y)\int_K{ \overline{g}_2\circ F^n (\underline{x}, y)}d \mu(y).
	\end{eqnarray*}Integrating with respect to the Markov's measure $m$ and applying Fubinis's Theorem, over $\Sigma_A^+$ we have
	\begin{eqnarray*}
		\int _{\Sigma_A^+} C_n(g_1,g_2)(\underline{x}) d m &=&\int _{\Sigma _A^+} \int_K{g_1 \overline{g}_2\circ F^n (\underline{x}, y)}d \mu(y)dm \\&-& \int _{\Sigma _A^+}\int_K{g_1(y)} d\mu(y)dm \int _{\Sigma _A^+}\int _K{ \overline{g}_2\circ F^n (\underline{x}, y)}d \mu(y)dm \\&=&\int _{\Sigma _A^+ \times K}{\overline{g}_1 \overline{g}_2\circ F^n (\underline{x}, y)}d(m \times \mu) \\&-& \int _{\Sigma _A^+ \times K}{\overline{g}_1} d(m \times \mu) \int _{\Sigma _A^+ \times K}{ \overline{g}_2\circ F^n (\underline{x}, y)}d(m \times \mu)\\&=&\int _{\Sigma}{\overline{g}_1 \overline{g}_2\circ F^n (\underline{x}, y)}d\mu _0 \\&-& \int _{\Sigma}{\overline{g}_1} d\mu _0 \int _{\Sigma}{ \overline{g}_2\circ F^n (\underline{x}, y)}d\mu _0.
	\end{eqnarray*}Where the last equality holds by the equation (\ref{oyiuotyu}) of Example \ref{dereroidf} (application of Theorem \ref{probun}).
	
	Then, $$\int _{\Sigma_A^+} C_n(g_1,g_2)(\underline{x}) d m = \int _{\Sigma}{\overline{g}_1 \overline{g}_2\circ F^n (\underline{x}, y)}d\mu _0 - \int _{\Sigma} {\overline{g}_1} d\mu _0 \int _{\Sigma} { \overline{g}_2\circ F^n (\underline{x}, y)}d\mu _0$$and by Theorem \ref{skdsdas}, the proof is complete.

\end{proof}

\section{Appendix}

\subsection{On Disintegration of Measures}\label{disint}
In this section, we prove some results on disintegration of absolutely continuous measures with respect to a measure $\mu_0 \in \mathcal{AB}$. Specifically, we are going to prove the Lemma \ref{hdgfghddsfg}. 

Let us fix some notation. Denote by $(\Sigma _A ^+,m)$ and $K$ the spaces defined in Section \ref{sec2}. For a $\mu_0$-integrable function $f: \Sigma _A ^+ \times K \longrightarrow \mathbb{R}$ and a pair $(\underline{x},y) \in \Sigma _A ^+ \times K$ ($\underline{x} \in \Sigma _A ^+$ and $y \in K$) we denote by $f_\gamma : K \longrightarrow \mathbb{R}$, the function defined by $f_\gamma(y) = f(\underline{x},y)$ and $f|_\gamma$ the restriction of $f$ on the set $\gamma = \gamma _{\underline{x}}:=\{\underline{x}\} \times K$. Then $f_\gamma = f|_\gamma \circ \pi _{2,\gamma}^{-1}$ and $f_\gamma \circ \pi_{2,\gamma} = f|_\gamma$, where $\pi _{2,\gamma}$ is the restriction of the projection $\pi _2(\underline{x},y):=y$ on the set $\gamma = \gamma_{\underline{x}}$. When no confusion is possible, we will denote the leaf $\gamma _{\underline{x}}$, just by $\gamma$.

From now on, for a given positive measure $\mu \in \mathcal{AB}$, on $\Sigma _A ^+ \times K$, $\widehat{\mu}$ stands for the measure $\pi_1^{*} \mu$, where $\pi_1$ is the projection on the first coordinate, $\pi_1(x,y)=x$.

For each measurable set $E \subset \Sigma _A ^+$, define $g: \Sigma _A ^+ \longrightarrow \mathbb{R}$ by $$g(\underline{x})= \phi_1(\underline{x})\int{\chi _{ \pi _1 ^{-1}(A)}|_\gamma (y) f|_\gamma(y)}d\mu_{0,\gamma}(y) \ \ \textnormal{where} \ \gamma = \gamma _{\underline{x}},$$ and note that

\begin{equation*}
	g(\underline{x})=
	\begin{cases}
		\phi_1(\underline{x}) \displaystyle{\int {f|_\gamma (y)}d \mu_{0,\gamma}}, \ \hbox{if} \ \underline{x} \in E \\
		0, \ \hbox{if} \ \underline{x} \notin E. 
	\end{cases}
\end{equation*}Then, it holds $$g(\underline{x}) = \chi _E (\underline{x}) \displaystyle{\phi_1(\underline{x})  \int {f|_\gamma (y)}d \mu_{0,\gamma}},$$where $\gamma = \gamma _{\underline{x}}$.

\begin{proof}{(of Lemma \ref{hdgfghddsfg})}
	
	For each measurable set $E \subset \Sigma _A ^+$, we have
	\begin{eqnarray*}
		\int_E{\dfrac{\pi _1 ^* (f\mu_0)}{dm}}dm &=&\int{\chi _E \circ \pi _1}d(f\mu_0)
		\\&=&\int{\chi _{ \pi _1 ^{-1}(E)} f}d\mu_0
		\\&=&\int \left[\int{\chi _{ \pi _1 ^{-1}(E)}|_\gamma (y) f|_\gamma(y)}d\mu_{0,\gamma}(y)\right]d(\phi_1 m)(\underline{x})
		\\&=&\int \left[\phi_1(\underline{x})\int{\chi _{ \pi _1 ^{-1}(E)}|_\gamma (y) f|_\gamma(y)}d\mu_{0,\gamma}(y)\right]dm(\underline{x})
		\\&=&\int{g(\underline{x})}dm(\underline{x})
		\\&=&\int_E {\left[\int{f_\gamma(y)}d\mu_{0}{|_\gamma}(y)\right]}dm(\underline{x}).
	\end{eqnarray*}Thus, it holds 
	
	\begin{equation*}
		\dfrac{\pi _1 ^* (f\mu_0)}{dm} (\underline{x}) =  \int{f_\gamma(y)}d\mu_{0}{|_\gamma}, \ \hbox{for} \ m-\hbox{a.e.} \ \underline{x} \in \Sigma _A ^+.
	\end{equation*}And by a straightforward computation 
	
	\begin{equation}\label{gh}
		\dfrac{\pi _1 ^* (f\mu_0)}{dm}(\underline{x}) = \phi_1 (\underline{x}) \int{f|_\gamma(y)}d\mu_{0,\gamma}, \ \hbox{for} \ m-\hbox{a.e.} \ \underline{x} \in \Sigma _A ^+.
	\end{equation}Thus, equation (\ref{fjgh}) is established.
	
	\begin{remark}\label{ghj}
		Setting
		\begin{equation}\label{tyu}
			\overline{f}:= \dfrac{\pi _1 ^* (f\mu_0)}{dm},
		\end{equation}we get, by equation (\ref{gh}), $\overline{f}(\underline{x})=0$ iff $\phi_1 (\underline{x}) = 0$ or $\displaystyle{\int{f|_\gamma(y)}d\mu_{0,\gamma} (y)=0}$, for  $m$-a.e. $\underline{x} \in \Sigma _A ^+$. 
	\end{remark}

	Now, let us see that, by the $\widehat{\nu}$-uniqueness of the disintegration, equation (\ref{gdfgdgf}) holds. To do it, define, for $m$-a.e. $\underline{x} \in \Sigma _A ^+$ (where $\gamma = \gamma _{\underline{x}}$), the function $h_\gamma : K \longrightarrow \mathbb{R}$, in a way that
	
	\begin{equation}\label{jri}
		h_\gamma (y) =
		\begin{cases}
			\dfrac{f|_\gamma (y)}{\int{f|_\gamma(y)}d\mu_{0,\gamma}(y)} , \ \hbox{if} \ \underline{x} \in B ^c \\
			0, \ \hbox{if} \ \underline{x} \in B,
		\end{cases}
	\end{equation}where $B :=  \overline{h} ^{-1}(0)$ and $\gamma=\gamma_{\underline{x}}=\{ \underline{x} \}\times K$. Let us prove equation (\ref{gdfgdgf}) by showing that, for all measurable set $E \subset \Sigma _A ^+ \times K$, it holds $$f \mu _0 (E)  = \int _{\Sigma _A ^+} {\int _{E \cap \gamma} {h_\gamma(y)}}d\mu _{0, \gamma} (y)d (\pi_1 {_*}(f \mu_0))(\underline{x}).$$In fact, by equations (\ref{gh}), (\ref{tyu}), (\ref{jri}) and Remark \ref{ghj}, we get
	
	\begin{eqnarray*}
		f\mu_0 (E) &=& \int _E {f} d\mu_0
		\\&=& \int _{\Sigma _A ^+} \int _{E\cap \gamma} {f|_\gamma} d\mu_{0, \gamma}d (\phi_1 m)(\underline{x})
		\\&=& \int _{B^c} \int _{E\cap \gamma} {f|_\gamma} d\mu_{0, \gamma}d (\phi_1 m)(\underline{x})
		\\&=& \int _{B^c} \int{f|_\gamma(y)}d\mu_{0,\gamma}(y)\phi_1 (\underline{x})\left[ \dfrac{1}{\int{f|_\gamma(y)}d\mu_{0,\gamma}(y)}\int _{E\cap \gamma} {f|_\gamma}  d\mu_{0, \gamma}\right]d m(\underline{x})
		\\&=& \int _{B^c} \overline{f}(\underline{x})\left[ \dfrac{1}{\int{f|_\gamma(y)}d\mu_{0,\gamma}(y)}\int _{E\cap \gamma} {f|_\gamma}  d\mu_{0, \gamma}\right]d m(\underline{x})
		\\&=& \int _{B^c} \left[ \dfrac{1}{\int{f|_\gamma(y)}d\mu_{0,\gamma}(y)}\int _{E\cap \gamma} {f|_\gamma}  d\mu_{0, \gamma}\right]d \overline{f}m(\underline{x})
		\\&=& \int _{B^c} {\int _{E \cap \gamma} {h_\gamma(y)}}d\mu _{0, \gamma} (y)d (\pi_1 {_*}(f \mu_0))(\underline{x})
		\\&=& \int _{\Sigma _A ^+} {\int _{E \cap \gamma} {h_\gamma(y)}}d\mu _{0, \gamma} (y)d (\pi_1 {_*}(f \mu_0))(\underline{x}).
	\end{eqnarray*}And we are done.
	
\end{proof}

\subsection{Lifting Invariant Measures}\label{lifting}

In this section, due to the general nature of the construction, we will use broader notations for our dynamical system. Specifically, we consider a system $F: M_1 \times M_2 \longrightarrow M_1 \times M_2$, defined by $F(x,y) = (T(x), G(x,y))$, where $T: M_1 \longrightarrow M_1$ and $G:M_1 \times M_2 \longrightarrow  M_2$ are measurable transformations satisfying certain assumptions described below. In particular, we assume that $T: M_1 \longrightarrow M_1$ admits an invariant measure $\mu_1$ on $M_1$. We show that $F$ admits an invariant measure $\mu_0$, such that $\pi _1^*\mu_0 = \mu_1$, where $\pi _1^*\mu_0 (E) := \mu_0 (\pi ^{-1}(E))$. This result extends the one presented in Subsection 7.3.4 (page 225) of \cite{AP}.
\subsubsection{Lifting Measures}

Let $(M_1, d_1)$ and $(M_2, d_2)$ be two compact metric spaces and suppose that they are endowed with their Borelean sigma algebras $\mathcal{B}_1$ and $\mathcal{B}_2$, respectively. Moreover, let $\mu_1$ and $\mu_2$ be measures on $\mathcal{B}_1$ and $\mathcal{B}_2$, respectively, where $\mu_1$ is a $T$-invariant probability for the measurable transformation $T:M_1 \longrightarrow M_1$. Also denotes by $\pi_1: M_1 \times M_2 \longrightarrow M_1$ the projection on $M_1$, $\pi_1(x,y)=x$ for all $x\in M_1$ and all $y \in M_2.$ 

Consider a measurable map $F : M_1 \times M_2 \longrightarrow M_1 \times M_2$,  defined by $F(x,y)=(T(x),G(x,y))$, where $G(x,y): M_1 \times M_2 \longrightarrow M_2$ satisfies:

\begin{enumerate}
	\item [$\bullet$] There exists a measurable set $A_1 \subset M_1$, with $\mu_1(A^c)=0$, such that for each  $x \in A_1$, it holds $T^n(x) \in A_1$ for all $n \geq 0$ and 
	\begin{equation}
		\label{1}
		\lim _{n\longrightarrow +\infty } \diam F^n(\gamma_x) = 0
	\end{equation} uniformly on $A_1$, where $\gamma_x := \{x\}\times M_2$ and the diameter of the set $F^n(\gamma_x)$ is defined with respect to the metric $d = d_1 + d_2$. 
\end{enumerate}For this sort of map, it holds $\displaystyle{F(\gamma_x) \subset \gamma_{T(x)}}$, for all $x$.

Let $C^0(M_1 \times M_2)$ be the space of the real continuous functions, $\psi: M_1 \times M_2 \longrightarrow \mathbb{R}$, endowed with the $||\cdot ||_\infty$ norm. 

For each $\psi \in C^0(M_1 \times M_2) $ define $\psi _+:M_1 \longrightarrow \mathbb{R}$ and $\psi _-:M_1 \longrightarrow \mathbb{R}$ by

\begin{equation}
	\psi _+ (x)= \sup _{\gamma _x} \psi(x,y),
\end{equation}and

\begin{equation}
	\psi _- (x)= \inf _{\gamma _x}\psi(x,y).
\end{equation}

\begin{lemma}
	Both limits $$\lim {\int{(\psi \circ F^n)_+}d\mu_1}$$ and $$\lim {\int{(\psi \circ F^n)_-}d\mu_1}$$ do exist and they are equal.
\end{lemma}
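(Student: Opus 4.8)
The plan is to set $a_n:=\int(\psi\circ F^n)_+\,d\mu_1$ and $b_n:=\int(\psi\circ F^n)_-\,d\mu_1$ (bounded measurable integrands, with $|(\psi\circ F^n)_\pm|\le\|\psi\|_\infty$ and $b_n\le a_n$ for every $n$), and to prove three facts: $(a_n)$ is non-increasing, $(b_n)$ is non-decreasing, and $a_n-b_n\to 0$. Monotonicity together with boundedness forces each sequence to converge, and $a_n-b_n\to 0$ then forces $\lim a_n=\lim b_n$, which is exactly the assertion of the lemma.

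For the monotonicity I would exploit the inclusion $F(\gamma_x)\subseteq\gamma_{T(x)}$ noted just above the lemma: iterating it gives $F^n(\gamma_x)=F^{n-1}(F(\gamma_x))\subseteq F^{n-1}(\gamma_{T(x)})$ for every $x\in A_1$, hence $(\psi\circ F^n)_+(x)=\sup_{F^n(\gamma_x)}\psi\le\sup_{F^{n-1}(\gamma_{T(x)})}\psi=(\psi\circ F^{n-1})_+(T(x))$, and symmetrically $(\psi\circ F^n)_-(x)=\inf_{F^n(\gamma_x)}\psi\ge(\psi\circ F^{n-1})_-(T(x))$ since the infimum is taken over a smaller set. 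Integrating these pointwise inequalities over $M_1$, which is legitimate because $\mu_1(A_1^c)=0$ and $T(A_1)\subseteq A_1$, and invoking the $T$-invariance of $\mu_1$ in the form $\int h\circ T\,d\mu_1=\int h\,d\mu_1$, we get $a_n\le a_{n-1}$ and $b_n\ge b_{n-1}$. With $-\|\psi\|_\infty\le b_n\le a_n\le\|\psi\|_\infty$ this yields that both sequences converge, say $a_n\downarrow a$ and $b_n\uparrow b$ with $b\le a$.

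For the equality of the two limits I would use compactness and the uniform contraction of the fibers. Fix $\varepsilon>0$; since $\psi$ is continuous on the compact metric space $M_1\times M_2$ it is uniformly continuous, so there is $\delta>0$ with $\osc(\psi,U)\le\varepsilon$ for every $U$ of diameter less than $\delta$. The hypothesis that $\diam F^n(\gamma_x)\to 0$ uniformly on $A_1$ gives an $N$ with $\diam F^n(\gamma_x)<\delta$ for all $n\ge N$ and all $x\in A_1$; for such $n$ and $x$ one has $(\psi\circ F^n)_+(x)-(\psi\circ F^n)_-(x)=\osc\bigl(\psi,F^n(\gamma_x)\bigr)\le\varepsilon$, and integrating (again using $\mu_1(A_1^c)=0$) gives $0\le a_n-b_n\le\varepsilon$ for all $n\ge N$. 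Letting $\varepsilon\downarrow 0$ shows $a_n-b_n\to 0$, hence $a=b$. The only delicate bookkeeping is the measurability of the functions $(\psi\circ F^n)_\pm$, needed merely so that the integrals are defined — this is part of the standing framework and is automatic in the concrete cases of interest — together with the careful use of $T$-invariance; beyond that the argument is a routine monotone-plus-gap estimate, so I do not anticipate a genuine obstacle.
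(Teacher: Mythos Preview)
Your proof is correct and, in my view, slightly cleaner than the paper's. The paper does not argue via monotonicity: instead it fixes $\varepsilon$, uses uniform continuity of $\psi$ and the uniform fiber contraction to show that
\[
0\le (\psi\circ F^{n+k})_-(x)-(\psi\circ F^{n})_-\bigl(T^k(x)\bigr)\le \varepsilon
\]
for all $n\ge n_0$ and all $k$, then integrates and invokes $T$-invariance to conclude that $(b_n)$ is Cauchy; only afterwards does it show $a_n-b_n\to 0$ exactly as you do. Your route replaces the Cauchy estimate by the pointwise monotonicity $(\psi\circ F^n)_+(x)\le(\psi\circ F^{n-1})_+(T(x))$ and its analogue for the infimum, which follows immediately from $F^n(\gamma_x)\subseteq F^{n-1}(\gamma_{T(x)})$; this gives convergence of each sequence by the elementary bounded-monotone argument rather than by completeness of $\mathbb{R}$. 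Both approaches rest on the same two ingredients --- the nesting $F(\gamma_x)\subseteq\gamma_{T(x)}$ and the uniform shrinking of $\diam F^n(\gamma_x)$ --- and both use $T$-invariance of $\mu_1$ in the same way; the difference is purely in how convergence is extracted, and your version has the minor advantage of making the monotone structure of the sequences explicit.
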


\begin{proof}
	
	Since $(M_1 \times M_2, d)$ is a compact metric space and $\psi  \in C^0(M_1 \times M_2) $, $\psi$ is uniformly continuous.
	
	Given $\epsilon > 0$, let $\delta >0$ be such that $|\psi(x_1,y_1) - \psi(x_2,y_2)|<\epsilon$ if $d((x_1,y_1),(x_2,y_2))<\delta$ and $n_0 \in \mathbb{N}$ such that $$n> n_0 \Rightarrow \diam F^n(\gamma_x) < \delta,$$ for all $x \in A_1$.
	
	For all $k \in \mathbb{N}$, it holds 
	
	\begin{equation}
		\label{pos}
		(\psi \circ F^{n+k})_- (x)- (\psi \circ F^{n})_- (T^k(x)) = \inf_{F^k (\gamma_x)} \psi \circ F^{n} - \inf_ {\gamma_{T^k(x)}}{\psi \circ F^{n}} \geq 0. 
	\end{equation}
	Moreover, for all $x \in A_1$

	\begin{eqnarray*}
		(\psi \circ F^{n+k})_- (x)- (\psi \circ F^{n})_- (T^k(x))
		&=&\inf_{F^k (\gamma_x)} \psi \circ F^{n} - \inf_ {\gamma_{T^k(x)}}{\psi \circ F^{n}}
		\\&\leq&\sup_{F^k (\gamma_x)} \psi \circ F^{n} - \inf_ {\gamma_{T^k(x)}}{\psi \circ F^{n}}
		\\&\leq&\sup_{\gamma_{T^k(x)}} \psi \circ F^{n} - \inf_ {\gamma_{T^k(x)}}{\psi \circ F^{n}}
		\\&\leq&\sup_{F^{n}(\gamma_{T^k(x)})} \psi   - \inf_ {F^{n}(\gamma_{T^k(x)})}{\psi}
		\\&\leq& \epsilon.
	\end{eqnarray*}Thus, $$(\psi \circ F^{n+k})_- (x)- (\psi \circ F^{n})_- (T^k(x))\leq \epsilon \ \forall \ x \in A_1.$$Integrating over $M_1$, since $\mu_1(A_1^c)=0$, we get $$\int{(\psi \circ F^{n+k})_-} d\mu_1 (x)- \int{(\psi \circ F^{n})_-} \circ T^kd\mu_1 \leq \epsilon.$$Since $\mu_1$ is $T$-invariant and by (\ref{pos}) we get $$\left|\int{(\psi \circ F^{n+k})_-} d\mu_1 (x)- \int{(\psi \circ F^{n})_-}d\mu_1\right| \leq \epsilon \ \ \forall \ \ n \geq n_0 \  \textnormal{and} \ \forall \  k \in \mathbb{N},$$which shows that the sequence $(\int{(\psi \circ F^{n})_-}d\mu_1)_n$ converges in $\mathbb{R}$. Let us denote 
	
	\begin{equation}
		\mu(\psi):= \lim_{n\longrightarrow +\infty } {\int{(\psi \circ F^{n})_-}d\mu_1}.
	\end{equation}

	It is remaining to show that, it also holds $\displaystyle{\mu(\psi)= \lim_{n\longrightarrow +\infty } {\int{(\psi \circ F^{n})_+}d\mu_1}}$. Indeed, given $\epsilon > 0$, let $\delta >0$ be such that $|\psi(x_1,y_1) - \psi(x_2,y_2)|<\dfrac{\epsilon}{2} $ if $d((x_1,y_1),(x_2,y_2))<\delta$.
	
	Consider $n_0 \in \mathbb{N}$ such that 
	
	\begin{equation}
		n> n_0 \Rightarrow \diam F^n(\gamma_x) < \delta,
	\end{equation} for all $x \in A_1$ and 
	
	\begin{equation}\label{tdf}
		n> n_0 \Rightarrow \left|\mu(\psi)- \int{(\psi \circ F^{n})_-}d\mu_1\right| < \dfrac{\epsilon}{2}.
	\end{equation} Then,
	
	\begin{eqnarray*}
		0 
		&\leq& (\psi \circ F^{n})_+ (x)- (\psi \circ F^{n})_- (x)
		\\&=& \sup_{F^{n}(\gamma_{x})} \psi   - \inf_ {F^{n}(\gamma_{x})}{\psi}
		\\&\leq& \dfrac{\epsilon}{2}.
	\end{eqnarray*}The above inequality yields, for all $n>n_0$
	
	\begin{equation}\label{djfsdf}
		\left|\int{(\psi \circ F^{n})_+} d\mu_1 (x)- \int{(\psi \circ F^{n})_-}d\mu_1\right|\leq \dfrac{\epsilon}{2}. 
	\end{equation}
	
	Therefore, if $n>n_0$, by (\ref{tdf}) and (\ref{djfsdf}) it holds
	
	\begin{eqnarray*}
		\left|\mu(\psi)- \int{(\psi \circ F^{n})_+}d\mu_1\right|
		&\leq& \left|\mu(\psi)- \int{(\psi \circ F^{n})_-}d\mu_1\right| 
		\\&+& \left|\int{(\psi \circ F^{n})_+}d\mu_1v- \int{(\psi \circ F^{n})_-}d\mu_1\right|
		\\&<&\epsilon.
	\end{eqnarray*}And the proof is complete.
	
\end{proof}

\begin{lemma}
	The function $\mu: C^0(M_1 \times M_2) \rightarrow \mathbb{R}$ defined for each $\psi \in C^0(M_1 \times M_2)$ by $\displaystyle{\mu(\psi):= \lim_{n\rightarrow +\infty } {\int{(\psi \circ F^{n})_-}d\mu_1}=\lim_{n\rightarrow +\infty } {\int{(\psi \circ F^{n})_+}d\mu_1}}$, defines a bounded linear functional such that $\mu(1)=1$ and $\mu(\psi)\geq 0$ for all $\psi \geq 0$.
\end{lemma}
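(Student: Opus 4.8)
The plan is to verify directly from the definition the four claimed properties — positivity, normalization $\mu(1)=1$, boundedness, and linearity — using only the previous lemma, which tells us that for every $\psi\in C^0(M_1\times M_2)$ both sequences $\bigl(\int (\psi\circ F^n)_-\,d\mu_1\bigr)_n$ and $\bigl(\int (\psi\circ F^n)_+\,d\mu_1\bigr)_n$ converge to the common number $\mu(\psi)$. Positivity, normalization and boundedness are immediate: if $\psi\ge 0$ then $(\psi\circ F^n)_-\ge 0$ pointwise, so $\int (\psi\circ F^n)_-\,d\mu_1\ge 0$ for each $n$ and hence $\mu(\psi)\ge 0$; since $(1\circ F^n)_-\equiv 1$ and $\mu_1$ is a probability, $\mu(1)=1$; and from $-||\psi||_\infty\le (\psi\circ F^n)_-\le (\psi\circ F^n)_+\le ||\psi||_\infty$ we get, integrating against the probability $\mu_1$ and passing to the limit, $|\mu(\psi)|\le ||\psi||_\infty$, so $\mu$ is bounded with norm $\le 1$ (in fact $=1$, by $\mu(1)=1$).

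For homogeneity I would separate two cases. If $c\ge 0$, then $(c\psi)_\pm=c\,\psi_\pm$ fiberwise, hence $(c\psi\circ F^n)_-=c\,(\psi\circ F^n)_-$ and $\mu(c\psi)=c\,\mu(\psi)$. If $c<0$, then on each fiber $(c\psi)_-=c\,\psi_+$ and $(c\psi)_+=c\,\psi_-$, so $\int((c\psi)\circ F^n)_-\,d\mu_1=c\int(\psi\circ F^n)_+\,d\mu_1\to c\,\mu(\psi)$, and again $\mu(c\psi)=c\,\mu(\psi)$; in particular $\mu(-\psi)=-\mu(\psi)$.

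The only point that needs a genuine (if short) argument is additivity, because the fiberwise infimum is superadditive and the fiberwise supremum is subadditive, not additive. Fix $\psi_1,\psi_2\in C^0(M_1\times M_2)$. Applying $\inf f+\inf g\le\inf(f+g)$ and $\sup(f+g)\le\sup f+\sup g$ on each image fiber $F^n(\gamma_x)$, one obtains, pointwise on $M_1$,
\[
(\psi_1\circ F^n)_-+(\psi_2\circ F^n)_-\;\le\;\bigl((\psi_1+\psi_2)\circ F^n\bigr)_-\;\le\;\bigl((\psi_1+\psi_2)\circ F^n\bigr)_+\;\le\;(\psi_1\circ F^n)_++(\psi_2\circ F^n)_+ .
\]
Integrating against $\mu_1$ and letting $n\to\infty$, the leftmost and rightmost integrals both tend to $\mu(\psi_1)+\mu(\psi_2)$ by the previous lemma, so the squeeze theorem forces $\mu(\psi_1+\psi_2)=\mu(\psi_1)+\mu(\psi_2)$. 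Combined with homogeneity this gives $\mathbb{R}$-linearity, completing the proof. I do not expect a real obstacle here; the only things to watch are the orientation flip of $(\cdot)_\pm$ under multiplication by a negative scalar and the one-sided nature of $\inf$/$\sup$ in the additivity estimate.
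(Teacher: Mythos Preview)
Your proof is correct and follows essentially the same approach as the paper: both use the superadditivity of the fiberwise infimum and subadditivity of the fiberwise supremum to sandwich $\mu(\psi_1+\psi_2)$ between $\mu(\psi_1)+\mu(\psi_2)$ on both sides, and both handle homogeneity by splitting into the cases $c\ge 0$ and $c<0$ using the sign-flip $(c\psi)_\pm=c\,\psi_\mp$ for $c<0$. The only cosmetic difference is that you package the additivity as a single four-term squeeze chain, whereas the paper writes the upper and lower estimates separately before concluding equality.
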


\begin{proof}
	Is straightforward to show that $\mu(1)=1$, $\mu(\psi)\geq 0$ for all $\psi \geq 0$ and $\mu(\psi) \leq 1$ for all $\psi$ in the unit ball, $||\psi||_\infty \leq 1$. Then $\mu$ is bounded. Let us see, that $\mu(\alpha\psi_1 + \psi_2) = \alpha\mu(\psi_1) + \mu(\psi_2)$, for all $\psi_1, \psi_2 \in C^0(M_1 \times M_2)$ and all $\alpha \in \mathbb{R}$.
	
	Define $A_\psi^x :=\{\psi(x,y); (x,y)\in \gamma_x\}$ and note that $\psi_+(x) = \sup A_\psi^x$ and $\psi_-(x) = \inf A_\psi^x$. Moreover, $A_{\psi_1 + \psi_2}^x \subset A_{\psi_1}^x + A_{ \psi_2}^x$. This implies 
	
	\begin{equation*}
		\sup A_{\psi_1 + \psi_2}^x \leq \sup A_{\psi_1}^x + \sup A_{ \psi_2}^x
	\end{equation*}and
	
	\begin{equation*}
		\inf A_{\psi_1 + \psi_2}^x \geq \inf A_{\psi_1}^x + \inf A_{ \psi_2}^x
	\end{equation*}which gives $(\psi_1 + \psi_2)_+ \leq (\psi_1)_+ + (\psi_2)_+$ and $(\psi_1 + \psi_2)_- \geq (\psi_1)_- + (\psi_2)_-$. Then,

	\begin{equation*}
		((\psi_1 + \psi_2) \circ F^n)_+ \leq (\psi_1 \circ F^n)_+ + (\psi_2\circ F^n)_+
	\end{equation*}and
	
	\begin{equation*}
		((\psi_1 + \psi_2) \circ F^n)_- \geq (\psi_1 \circ F^n)_- + (\psi_2\circ F^n)_-.
	\end{equation*}Integrating and taking the limit we have

	\begin{equation*}
		\lim_{n\rightarrow +\infty } \int{((\psi_1 + \psi_2) \circ F^n)_+}d\mu_1 \leq \lim_{n\rightarrow +\infty } \int{(\psi_1 \circ F^n)_+}d\mu_1 + \lim_{n\rightarrow +\infty } \int{(\psi_2\circ F^n)_+}d\mu_1
	\end{equation*}and
	
	\begin{equation*}
		\lim_{n\rightarrow +\infty } \int{((\psi_1 + \psi_2) \circ F^n)_-}d\mu_1 \geq \lim_{n\rightarrow +\infty } \int{(\psi_1 \circ F^n)_-}d\mu_1 + \lim_{n\rightarrow +\infty } \int{(\psi_2\circ F^n)_-}d\mu_1.
	\end{equation*}This implies $\mu(\psi_1+\psi_2) = \mu(\psi_1)+\mu(\psi_2)$.

	For $\alpha \geq 0$, we have 	
	\begin{eqnarray*}
		\mu(\alpha \psi)
		&=&\lim_{n\rightarrow +\infty } \int{((\alpha\psi) \circ F^n)_+}d\mu_1 
		\\&=&\alpha\lim_{n\rightarrow +\infty } \int{(\psi \circ F^n)_+}d\mu_1
		\\&=&\alpha \mu(\psi).
	\end{eqnarray*}
	For $\alpha <0$, it holds 
	\begin{eqnarray*}
		\mu(\alpha \psi)
		&=&\lim_{n\rightarrow +\infty } \int{((\alpha\psi) \circ F^n)_+}d\mu_1 
		\\&=&\alpha\lim_{n\rightarrow +\infty } \int{(\psi \circ F^n)_-}d\mu_1
		\\&=&\alpha \mu(\psi).
	\end{eqnarray*}And the proof is complete.
	
\end{proof}	

As a consequence of the above results and the Riez-Markov Lemma we get the following theorem.

\begin{theorem}\label{kjdhkskjfkjskdjf}
	There exists a unique measure $\mu_0$ on $M_1 \times M_2$ such that for every continuous function $\psi \in C^0 (M_1 \times M_2)$ it holds

	\begin{equation}
		\mu(\psi)=\int {\psi}d\mu_0. 
	\end{equation}
\end{theorem}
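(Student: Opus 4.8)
The plan is to invoke the classical Riesz--Markov(--Kakutani) representation theorem, since the two preceding lemmas have already carried out all of the analytic work. They establish that the functional $\mu : C^0(M_1 \times M_2) \longrightarrow \mathbb{R}$, $\psi \longmapsto \lim_{n} \int (\psi \circ F^n)_- \, d\mu_1$, is well defined, linear, bounded, positive (that is, $\mu(\psi)\geq 0$ whenever $\psi \geq 0$), and normalized so that $\mu(1)=1$. First I would record that, since $(M_1 \times M_2, d)$ with $d = d_1 + d_2$ is a compact metric space, $C^0(M_1 \times M_2)$ endowed with $||\cdot||_\infty$ is a Banach space to which the Riesz representation theorem applies.

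Then I would apply that theorem: there is a unique finite regular Borel measure $\mu_0$ on $M_1 \times M_2$ with $\mu(\psi) = \int \psi \, d\mu_0$ for every $\psi \in C^0(M_1 \times M_2)$. The positivity of $\mu$ forces $\mu_0$ to be a genuine non-negative measure (rather than a signed one), and $\mu_0(M_1 \times M_2) = \mu(1) = 1$ shows it is a probability measure. For the uniqueness statement I would note that two Borel probability measures on a compact metric space which integrate every continuous function identically must coincide: by Urysohn's lemma one approximates the indicator of a closed set from above by continuous functions, so the two measures agree on all closed sets and hence on the whole Borel $\sigma$-algebra; alternatively, uniqueness is already part of the conclusion of the Riesz--Markov theorem.

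I do not expect a genuine obstacle here, as the statement is a clean corollary of the two lemmas together with a classical theorem. The only points deserving an explicit line are that the positivity proved in the lemmas upgrades the abstract functional representation to a bona fide positive measure, and that $\mu(1)=1$ promotes it to a probability; everything else is a matter of citation (e.g.\ to the standard references, in the spirit of the argument in subsection~7.3.4 of \cite{AP}).
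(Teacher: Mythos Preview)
Your proposal is correct and follows exactly the paper's approach: the paper does not even write out a proof, simply stating ``As a consequence of the above results and the Riez-Markov Lemma we get the following theorem.'' Your write-up is in fact more detailed than the paper's own treatment, spelling out why positivity and $\mu(1)=1$ yield a probability measure and why uniqueness holds.
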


\begin{proposition}\label{kjdhkskjfkjskdjff}
	The measure $\mu_0$ is $F$-invariant and $\pi_1^{\ast}\mu_0 = \mu_1$.
\end{proposition}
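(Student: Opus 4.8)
The plan is to verify both assertions by testing against continuous functions, using only two elementary features of the skew product together with the characterization of $\mu_0$ from Theorem \ref{kjdhkskjfkjskdjf}, namely $\int\psi\,d\mu_0=\mu(\psi)=\lim_n\int(\psi\circ F^n)_-\,d\mu_1=\lim_n\int(\psi\circ F^n)_+\,d\mu_1$ for $\psi\in C^0(M_1\times M_2)$ (with, by (\ref{pos}) and the $T$-invariance of $\mu_1$, the first sequence nondecreasing and the second nonincreasing). The two structural features are: the semiconjugacy $\pi_1\circ F=T\circ\pi_1$, immediate from $F(x,y)=(T(x),G(x,y))$, which iterates to $\pi_1\circ F^n=T^n\circ\pi_1$; and the trivial identity $(\psi\circ F)\circ F^n=\psi\circ F^{n+1}$, so that the sequences the appendix's construction attaches to $\psi\circ F$ are the tails (from index $1$) of those it attaches to $\psi$.

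First I would prove $\pi_{1*}\mu_0=\mu_1$. Both are Borel probability measures on the compact metric space $M_1$ ($\mu_0$ is a probability because $\mu(1)=1$), so it suffices to show $\int\varphi\,d(\pi_{1*}\mu_0)=\int\varphi\,d\mu_1$ for every $\varphi\in C^0(M_1)$. Now $\int\varphi\,d(\pi_{1*}\mu_0)=\int\varphi\circ\pi_1\,d\mu_0=\mu(\varphi\circ\pi_1)$, and the function $(\varphi\circ\pi_1)\circ F^n=\varphi\circ T^n\circ\pi_1$ depends only on the first coordinate; hence over each fibre $\gamma_x=\{x\}\times M_2$ its supremum and its infimum both equal $\varphi(T^n(x))$. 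Therefore every term of the sequence defining $\mu(\varphi\circ\pi_1)$ equals $\int\varphi\circ T^n\,d\mu_1=\int\varphi\,d\mu_1$ (using $T_{*}\mu_1=\mu_1$), so $\mu(\varphi\circ\pi_1)=\int\varphi\,d\mu_1$, giving $\pi_{1*}\mu_0=\mu_1$.

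For $F$-invariance I would show $F_{*}\mu_0=\mu_0$ by checking $\int\psi\,d(F_{*}\mu_0)=\int\psi\circ F\,d\mu_0$ equals $\int\psi\,d\mu_0=\mu(\psi)$ for every $\psi\in C^0(M_1\times M_2)$. The natural computation is $\int\psi\circ F\,d\mu_0=\mu(\psi\circ F)=\lim_n\int((\psi\circ F)\circ F^n)_-\,d\mu_1=\lim_n\int(\psi\circ F^{n+1})_-\,d\mu_1$, and since $(\int(\psi\circ F^n)_-\,d\mu_1)_n$ converges, its tail has the same limit $\mu(\psi)$; the same with the $(\cdot)_+$ version. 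Hence $\int\psi\circ F\,d\mu_0=\mu(\psi)=\int\psi\,d\mu_0$ and $F_{*}\mu_0=\mu_0$.

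The semiconjugacy identities and the bookkeeping with tails of convergent sequences are routine; the step of $\pi_{1*}\mu_0=\mu_1$ is complete as written. The one point that needs genuine care -- the main obstacle -- is the identity $\int\psi\circ F\,d\mu_0=\mu(\psi\circ F)$ used in the invariance step: since $G$ (hence $F$) is not assumed continuous in the horizontal direction, $\psi\circ F$ need not lie in $C^0(M_1\times M_2)$, so one cannot quote Theorem \ref{kjdhkskjfkjskdjf} verbatim. The way I would resolve it is to observe that, by the uniform vanishing of $\diam F^n(\gamma_x)$, the function $\psi\circ F$ has exactly the property of continuous functions that the two lemmas of this appendix actually use -- $\operatorname{osc}\big((\psi\circ F)|_{F^n(\gamma_x)}\big)=\operatorname{osc}\big(\psi|_{F^{n+1}(\gamma_x)}\big)\to0$ uniformly -- so the limiting formula makes sense for $\psi\circ F$ and still computes $\int\psi\circ F\,d\mu_0$; re-running those lemmas with $\psi\circ F$ in place of $\psi$ (and exploiting that $((\psi\circ F)\circ F^n)_\pm=(\psi\circ F^{n+1})_\pm$ are among the functions already shown measurable) then yields $\int\psi\circ F\,d\mu_0=\mu(\psi\circ F)$, closing the argument. (In the alternative situation where $G$ is continuous this is immediate and no such discussion is needed.)
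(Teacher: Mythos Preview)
Your argument follows the paper's proof essentially verbatim: test both identities against continuous $\psi$, use fibre-constancy of $\varphi\circ\pi_1\circ F^n=\varphi\circ T^n\circ\pi_1$ for the projection identity, and use the tail-of-sequence observation $((\psi\circ F)\circ F^n)_\pm=(\psi\circ F^{n+1})_\pm$ for invariance. The paper writes exactly this chain of equalities.

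You are in fact more careful than the paper on the one delicate point. The paper simply writes $\int\psi\circ F\,d\mu_0=\lim_n\int(\psi\circ F^{n}\circ F)_+\,d\mu_1$ without comment, implicitly invoking Theorem~\ref{kjdhkskjfkjskdjf} for $\psi\circ F$; you correctly flag that this needs justification when $F$ is only measurable. Your proposed patch---observing that $\psi\circ F$ enjoys the same uniform-oscillation-decay property that the two lemmas actually exploit---is the right diagnosis, but note that re-running those lemmas only shows the limit $\mu(\psi\circ F)$ \emph{exists}; Riesz--Markov by itself does not then hand you $\int\psi\circ F\,d\mu_0=\mu(\psi\circ F)$ for a discontinuous integrand. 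One clean way to finish, using what you have already established, is the sandwich $(\psi\circ F^n)_-\circ\pi_1\le\psi\circ F^n\le(\psi\circ F^n)_+\circ\pi_1$ together with $\pi_{1*}\mu_0=\mu_1$; however, at the stated generality of Appendix~2 (merely measurable $F$) neither the paper's proof nor this route is completely airtight without some additional regularity argument. In the paper's actual applications $F$ is continuous and the issue evaporates, which is presumably why the paper does not dwell on it.
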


\begin{proof}
	Denote by $\func{F}^{\ast}\mu _0$ the measure defined by $\func{F}^{\ast}\mu _0(E) = \mu_0(F^{-1}(E))$ for all measurable set $E \subset M_1 \times M_2$. For each $\psi \in C^0 (M_1 \times M_2)$ we have
	\begin{eqnarray*}
		\int {\psi}d\func{F}^{\ast}\mu _0
		&=&\int {\psi \circ F}d\mu _0 
		\\&=& \mu(\psi)
		\\&=& \lim_{n\rightarrow +\infty } \int{(\psi \circ F^n \circ F)_+}d\mu_1
		\\&=& \mu(\psi)	
		\\&=& \int {\psi }d\mu _0 .
	\end{eqnarray*}	It implies that $\func{F}^{\ast}\mu _0 = \mu_0$ and we are done.

	To prove that $\pi_1^{\ast}\mu_0 = \mu_1$, consider a continuous function $\phi: M_1 \longrightarrow \mathbb{R}$. Then, $\phi \circ \pi _1 \in C^0 (M_1 \times M_2)$. Therefore,
	\begin{eqnarray*}
		\int{\phi}d\pi_1^{\ast}\mu_0 
		&=& \int{\phi \circ \pi _1}d\mu_0
		\\&=& \lim_{n\rightarrow +\infty } \int{(\phi \circ \pi _1 \circ F^n )_+}d\mu_1 
		\\&=& \lim_{n\rightarrow +\infty } \int{(\phi \circ T^n)_+}d\mu_1 
		\\&=& \lim_{n\rightarrow +\infty } \int{\phi \circ T^n}d\mu_1 
		\\&=& \lim_{n\rightarrow +\infty } \int{\phi}d\mu_1
		\\&=&  \int{\phi}d\mu_1.
	\end{eqnarray*}Thus, $\pi_1^{\ast}\mu_0 = \mu_1$.
	
\end{proof}

\subsection{Linearity of the restriction \label{remmm}}

Consider the measurable spaces $\Sigma _A ^+$, $K$, and $\Sigma_A^+ \times K$ defined in Subsections \ref{sec2} and \ref{sec22}. Let $\mu \in \mathcal{AB}$ be a positive measure on the measurable space $(\Sigma, \mathcal{B})$, where $\Sigma = \Sigma_A^+ \times K$ and consider its disintegration $(\{\mu_{\gamma_{\underline{x}}}\} _{\gamma_{\underline{x}}}, \phi _1 m )$ along $\mathcal{F}^s$, where $\phi _ 1:= \dfrac{d\pi _1 ^{\ast}\mu}{dm} \in L^1_ m$. Henceforth, to simplify the notation, we denote $\gamma _{\underline{x}}$ by $\gamma$.

The proof of the following proposition is a direct consequence of Proposition 5.1.7, page 150 of \cite{Kva}, so we omit it. 
\begin{proposition}
	Suppose that $\mathcal{B}$ has a countable generator, $\Gamma$. If $\{\mu_\gamma\}_\gamma$ and $\{\mu'_\gamma\}_\gamma$ are disintegrations of a positive measure $\mu$ along $\mathcal{F}^s$, then $$\phi_1(\underline{x})\mu_{\gamma_{\underline{x}}} = \phi_1(\underline{x})\mu'_{\gamma_{\underline{x}}}$$
	$m$-a.e. $\underline{x} \in \Sigma^{+}_A.$
	\label{againa}
\end{proposition}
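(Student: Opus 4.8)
The plan is to obtain Proposition~\ref{againa} as an immediate corollary of the Rokhlin uniqueness statement, Lemma~\ref{kv}. The first observation is that the quotient measure attached to any disintegration of $\mu$ relative to $\mathcal{F}^s$ is forced: by Theorem~\ref{rok} it must equal $\pi_{1*}\mu = \mu_x = \phi_x m_1$. Hence the two hypothesised families $(\{\mu_\gamma\}_\gamma,\mu_x)$ and $(\{\mu'_\gamma\}_\gamma,\mu_x)$ are genuinely two disintegrations of $\mu$ with the \emph{same} quotient measure, and since $\mathcal{B}$ admits the countable generator $\Gamma$, Lemma~\ref{kv} gives $\mu_\gamma=\mu'_\gamma$ for $\mu_x$-a.e.\ $\gamma\in N_1$.

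Next I would identify the exceptional set precisely, so as to transfer the null-set information from $\mu_x$ to $m_1$. Enumerate the $\pi$-system generated by $\Gamma$ (it is still countable) as $\{E_n\}_{n\in\mathbb{N}}$; two finite measures on $(\Sigma,\mathcal{B})$ agree iff they agree on every $E_n$, so
\[
N:=\{\gamma\in N_1:\mu_\gamma\neq\mu'_\gamma\}=\bigcup_{n\in\mathbb{N}}\{\gamma:\mu_\gamma(E_n)\neq\mu'_\gamma(E_n)\}.
\]
Each set in the union is measurable because $\gamma\mapsto\mu_\gamma(E_n)$ and $\gamma\mapsto\mu'_\gamma(E_n)$ are measurable by item~(b) of Theorem~\ref{rok}; hence $N$ is measurable, and by the first paragraph $\mu_x(N)=0$, i.e.\ $\int_N\phi_x\,dm_1=0$. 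Since $\phi_x\geq 0$, this forces $\phi_x=0$ for $m_1$-a.e.\ $\gamma\in N$.

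Finally I would conclude by splitting $N_1=N^c\sqcup N$. On $N^c$ one has $\mu_\gamma=\mu'_\gamma$, so $\phi_x(\gamma)\mu_\gamma=\phi_x(\gamma)\mu'_\gamma$; on $N$ one has $\phi_x(\gamma)=0$ for $m_1$-a.e.\ $\gamma$, and for such $\gamma$ both $\phi_x(\gamma)\mu_\gamma$ and $\phi_x(\gamma)\mu'_\gamma$ are the zero measure on $\Sigma$. Therefore $\phi_x(\gamma)\mu_\gamma=\phi_x(\gamma)\mu'_\gamma$ for $m_1$-a.e.\ $\gamma\in N_1$, as claimed. I do not anticipate a real obstacle: the substance is entirely contained in Lemma~\ref{kv}, and the only care needed is (i) checking that $N$ is measurable, which the countable generator provides, and (ii) the elementary point that a nonnegative density whose integral over $N$ vanishes must vanish $m_1$-a.e.\ on $N$ — this is exactly what promotes the ``$\mu_x$-a.e.'' equality of conditional measures to the weighted, ``$m_1$-a.e.'' equality in the statement.
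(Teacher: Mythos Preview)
Your proposal is correct. The route differs from the paper's in a small but worth-noting way. The paper does not invoke Lemma~\ref{kv}; instead it essentially reproves it with the weight $\phi_x$ built in: for each $A$ in the countable algebra generated by $\Gamma$ it considers the sets $G_A=\{\gamma:\phi_x(\gamma)\mu_\gamma(A)<\phi_x(\gamma)\mu'_\gamma(A)\}$ and $R_A$ analogously, and shows directly from the disintegration identity $\mu(A\cap\pi_1^{-1}(G_A))=\int_{G_A}\mu_\gamma(A)\phi_x\,dm_1=\int_{G_A}\mu'_\gamma(A)\phi_x\,dm_1$ that $m_1(G_A)=m_1(R_A)=0$, hence the weighted measures agree on the algebra $m_1$-a.e. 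Your argument is more modular: you take the $\hat\mu$-a.e.\ equality of the unweighted conditionals from Lemma~\ref{kv} as a black box and then perform the clean transfer step $\hat\mu(N)=0\Rightarrow\phi_x=0$ $m_1$-a.e.\ on $N$. Your version is shorter and makes the dependence on Lemma~\ref{kv} explicit; the paper's version is self-contained and avoids the extra step of promoting the null set from $\hat\mu$ to $m_1$. Both reach the same conclusion with the same underlying idea (countable generator $\Rightarrow$ the discrepancy set is a countable union of measurable sets).
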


\begin{proposition}\label{hytre}
	Let $\mu_1,\mu_2 \in \mathcal{AB}$ be two positive measures and denote their marginal densities by $\dfrac{d{\mu}_1}{dm}= \phi_1 $ and $\dfrac{d\mu _2}{dm} = \psi_1 $ , where $\phi_1, \psi_1\in L^1_m$ respectively. Then $(\mu_1 + \mu_2)|_\gamma = \mu_1|_\gamma + \mu_2|_\gamma,$ where $\gamma=\gamma_{\underline{x}}$ for $m$-a.e. $\underline{x} \in \Sigma^+_A$.
\end{proposition}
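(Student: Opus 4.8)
The plan is to exhibit the disintegration of the sum $\mu:=\mu_{1}+\mu_{2}$ explicitly as the (density‑weighted) convex combination of the disintegrations of $\mu_{1}$ and $\mu_{2}$, and then to transport the resulting identity to $N_{2}$ by the push‑forward $\pi_{2,\gamma}^{\ast}$, using the essential uniqueness of disintegrations (Proposition \ref{againa}). First I would note that $\pi_{1}{_{\ast}}\mu=\pi_{1}{_{\ast}}\mu_{1}+\pi_{1}{_{\ast}}\mu_{2}$, so $\mu\in\mathcal{AB}$ and its marginal density is $\eta_{x}:=\phi_{x}+\psi_{x}\in L^{1}(m_{1})$. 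Put $B:=\eta_{x}^{-1}(0)$; since $\phi_{x},\psi_{x}\geq 0$ we have $\phi_{x}=\psi_{x}=0$ on $B$, and $(\pi_{1}{_{\ast}}\mu)(B)=0$, so $B$ is negligible for all three quotient measures involved.

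Next I would introduce the candidate family $\{\mu_{\gamma}\}_{\gamma}$, defined $m_{1}$‑a.e. by
\[
\mu_{\gamma}:=\frac{\phi_{x}(\gamma)}{\eta_{x}(\gamma)}\,\mu_{1,\gamma}+\frac{\psi_{x}(\gamma)}{\eta_{x}(\gamma)}\,\mu_{2,\gamma}\ \ (\gamma\notin B),\qquad \mu_{\gamma}:=(\text{any fixed probability on }\gamma)\ \ (\gamma\in B),
\]
where $(\{\mu_{1,\gamma}\}_{\gamma},\phi_{x}m_{1})$ and $(\{\mu_{2,\gamma}\}_{\gamma},\psi_{x}m_{1})$ are disintegrations of $\mu_{1}$ and $\mu_{2}$ along $\mathcal{F}^{s}$. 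For $\gamma\notin B$ each $\mu_{\gamma}$ is a convex combination of probabilities supported on $\gamma$, hence a probability supported on $\gamma$; the map $\gamma\mapsto\mu_{\gamma}(E)$ is measurable for every measurable $E\subset\Sigma$ because $\gamma\mapsto\mu_{i,\gamma}(E)$ is measurable by Theorem \ref{rok} and $\phi_{x},\psi_{x},\eta_{x}$ are measurable. Finally, for every measurable $E\subset\Sigma$, part (c) of Theorem \ref{rok} applied to $\mu_{1}$ and $\mu_{2}$ gives
\[
\int_{N_{1}}{\mu_{\gamma}(E\cap\gamma)}\,d(\eta_{x}m_{1})(\gamma)=\int_{N_{1}}{\mu_{1,\gamma}(E\cap\gamma)}\,d(\phi_{x}m_{1})+\int_{N_{1}}{\mu_{2,\gamma}(E\cap\gamma)}\,d(\psi_{x}m_{1})=\mu_{1}(E)+\mu_{2}(E)=\mu(E),
\]
so $(\{\mu_{\gamma}\}_{\gamma},\eta_{x}m_{1})$ is a disintegration of $\mu$ relative to $\mathcal{F}^{s}$.

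To conclude, let $(\{\mu'_{\gamma}\}_{\gamma},\eta_{x}m_{1})$ be the disintegration of $\mu$ furnished by Theorem \ref{rok} (the one used in Definition \ref{restrictionmeasure} to define $(\mu_{1}+\mu_{2})|_{\gamma}$). By Proposition \ref{againa}, $\eta_{x}(\gamma)\mu_{\gamma}=\eta_{x}(\gamma)\mu'_{\gamma}$ for $m_{1}$‑a.e. $\gamma$, while by construction $\eta_{x}(\gamma)\mu_{\gamma}=\phi_{x}(\gamma)\mu_{1,\gamma}+\psi_{x}(\gamma)\mu_{2,\gamma}$ (valid also on $B$, where both sides vanish). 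Applying the linear push‑forward $\pi_{2,\gamma}^{\ast}$ and Definition \ref{restrictionmeasure} then yields
\[
(\mu_{1}+\mu_{2})|_{\gamma}=\pi_{2,\gamma}^{\ast}\big(\eta_{x}(\gamma)\mu'_{\gamma}\big)=\pi_{2,\gamma}^{\ast}\big(\phi_{x}(\gamma)\mu_{1,\gamma}\big)+\pi_{2,\gamma}^{\ast}\big(\psi_{x}(\gamma)\mu_{2,\gamma}\big)=\mu_{1}|_{\gamma}+\mu_{2}|_{\gamma}
\]
for $m_{1}$‑a.e. $\gamma\in N_{1}$, which is the claim.

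The only delicate point, and the one I would be most careful about, is the bookkeeping of $m_{1}$‑null sets: the disintegrations of $\mu_{1}$, $\mu_{2}$, and $\mu_{1}+\mu_{2}$ are each defined only off a null set, the zero set $B$ of $\eta_{x}$ must be handled separately, and the uniqueness must be invoked in the density‑weighted form of Proposition \ref{againa} so that one never divides by $\eta_{x}$. Everything else is routine linearity of the integral and of $\pi_{2,\gamma}^{\ast}$.
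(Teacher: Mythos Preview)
Your proof is correct and follows essentially the same route as the paper: both construct the candidate disintegration of $\mu_{1}+\mu_{2}$ as the density-weighted convex combination $\frac{\phi_{x}}{\phi_{x}+\psi_{x}}\mu_{1,\gamma}+\frac{\psi_{x}}{\phi_{x}+\psi_{x}}\mu_{2,\gamma}$, invoke Proposition~\ref{againa} to identify it with any other disintegration, and then push forward by $\pi_{2,\gamma}^{\ast}$. Your version is in fact more careful than the paper's about verifying the disintegration axioms and handling the null set $B=\{\phi_{x}+\psi_{x}=0\}$.
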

\begin{proof}
	Note that $\dfrac{d(\mu_1 + \mu_2)}{dm}= \phi _1 + \psi_1$. Moreover, consider the disintegration of $\mu _1 + \mu_2$ given by $$(\{(\mu_1 + \mu_2) _\gamma\}_\gamma, (\phi _1 + \psi_1) m),$$ where
	
	\begin{equation*}
		(\mu_1 + \mu_2) _\gamma =
		\begin{cases}
			\dfrac{\phi _1(\underline{x})}{\phi _1(\underline{x}) + \psi _1(\underline{x})} \mu _{1,\gamma}+ \dfrac{\psi _1(\underline{x})}{\phi _1(\underline{x}) + \psi _1(\underline{x})} \mu _{2,\gamma}, \ \textnormal{if} \ \phi _1(\underline{x}) + \psi _1(\underline{x}) \neq 0  \\ 0,\ \textnormal{if} \ \phi _1(\underline{x}) + \psi _1(\underline{x}) = 0.
		\end{cases}
	\end{equation*}Then, by Proposition \ref{againa} for $m$-a.e. $\underline{x} \in \Sigma_A^+$, it holds $$(\phi _1 + \psi_1)(\underline{x})(\mu_1 + \mu_2) _\gamma = \phi _1(\underline{x}) \mu _{1,\gamma} + \psi _1(\underline{x}) \mu _{2,\gamma},$$ where $\{\mu_{1,\gamma}\}_{\gamma}$ and $\{\mu_{2,\gamma}\}_\gamma$ are the disintegrations of $\mu_1$ and $\mu_2$ respectively. Therefore, $(\mu_1 + \mu_2)|_\gamma = \mu_1|_\gamma + \mu_2|_\gamma$ $m$-a.e. $\underline{x} \in \Sigma_A^+$.
	
\end{proof}

\begin{definition}\label{disj}
	We say that a positive measure $\lambda_1$ is disjoint from a positive measure $\lambda _2$ if $(\lambda_1 - \lambda _2)^+ = \lambda _1$ and $(\lambda_1 - \lambda _2)^- = \lambda _2$.
\end{definition}

\begin{remark}
	A straightforward computations yields that if $\lambda _1 + \lambda _2$ is disjoint from $\lambda _3$, then both $\lambda _1$ and $\lambda _2$ are disjoint from $\lambda _3$, where $\lambda_1, \lambda_2$ and $\lambda_3$ are all positive measures. 
\end{remark}

\begin{lemma}Suppose that $\mu =\mu ^{+}-\mu ^{-}$ and $\nu =\nu ^{+}-\nu ^{-}$ are the Jordan decompositions of the signed measures $\mu$ and $\nu$. Then, there exist positive measures $\mu_1$, $\mu_2$, $\mu ^{++}$, $\mu ^{--}$, $\nu ^{++}$ and $\nu ^{--}$ such that $\mu ^{+}=\mu ^{++}+\mu_{1}$ $\mu ^{-}=\mu ^{--}+\mu_{2}$ and  $\nu ^{+}=\nu ^{++}+\mu_{2}$, $\nu ^{-}=\nu ^{--}+\mu_{1}$.
	\label{fhdj}
\end{lemma}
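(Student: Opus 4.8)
The plan is to reduce the statement to an elementary pointwise fact about nonnegative densities by putting all four Jordan components on a common measure space footing. First I would set $\lambda := \mu^{+}+\mu^{-}+\nu^{+}+\nu^{-}$, a finite positive measure, and observe that each of $\mu^{+},\mu^{-},\nu^{+},\nu^{-}$ is absolutely continuous with respect to $\lambda$. Let $f_{+},f_{-},g_{+},g_{-}$ be the corresponding Radon--Nikodym densities, so that $\mu^{\pm}=f_{\pm}\,\lambda$ and $\nu^{\pm}=g_{\pm}\,\lambda$, all of them nonnegative and $\lambda$-integrable.

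Then I would simply take pointwise minima. Define $h_{1}:=\min\{f_{+},g_{-}\}$ and $h_{2}:=\min\{f_{-},g_{+}\}$ (measurable and nonnegative), and set
\[
\mu_{1}:=h_{1}\,\lambda,\qquad \mu_{2}:=h_{2}\,\lambda,
\]
\[
\mu^{++}:=(f_{+}-h_{1})\,\lambda,\quad \mu^{--}:=(f_{-}-h_{2})\,\lambda,\quad \nu^{++}:=(g_{+}-h_{2})\,\lambda,\quad \nu^{--}:=(g_{-}-h_{1})\,\lambda.
\]
Since $0\le h_{1}\le f_{+}$, $0\le h_{1}\le g_{-}$, $0\le h_{2}\le f_{-}$ and $0\le h_{2}\le g_{+}$ hold $\lambda$-a.e., all six measures are positive, and the four required identities follow by adding densities: $\mu^{++}+\mu_{1}=(f_{+}-h_{1}+h_{1})\lambda=f_{+}\lambda=\mu^{+}$, and likewise $\mu^{--}+\mu_{2}=\mu^{-}$, $\nu^{++}+\mu_{2}=\nu^{+}$, $\nu^{--}+\mu_{1}=\nu^{-}$. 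Equivalently, $\mu_{1}=\mu^{+}\wedge\nu^{-}$ and $\mu_{2}=\mu^{-}\wedge\nu^{+}$ in the lattice of finite positive measures, which is why this is the natural choice; one can also avoid Radon--Nikodym by taking $\mu^{++},\nu^{--}$ to be the Jordan parts of the signed measure $\mu^{+}-\nu^{-}$ and $\mu^{--},\nu^{++}$ those of $\mu^{-}-\nu^{+}$, checking nonnegativity of $\mu^{+}-(\mu^{+}-\nu^{-})^{+}$ by restricting to a Hahn set for $\mu^{+}-\nu^{-}$.

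There is essentially no hard step here: the only thing requiring attention is the reduction in the first paragraph, i.e. realizing that one must dominate all four Jordan components by a single measure (or perform simultaneous Hahn decompositions) before the conclusion becomes the trivial identity $a=(a-a\wedge b)+a\wedge b$ applied to the density pairs $(f_{+},g_{-})$ and $(f_{-},g_{+})$. I would merely record, for rigor, that $\min$ of measurable functions is measurable, so that $\mu_{1},\mu_{2}$ are genuine Borel measures, and that finiteness of $\lambda$ ensures every subtracted density is integrable.
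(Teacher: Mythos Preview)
Your argument is correct. The route differs from the paper's: you pass to Radon--Nikodym densities with respect to the dominating measure $\lambda=\mu^{+}+\mu^{-}+\nu^{+}+\nu^{-}$ and take pointwise minima, whereas the paper works intrinsically, first recording that any positive decomposition $\rho=\nu_{1}-\nu_{2}$ satisfies $\nu_{1}=\rho^{+}+\mu'$, $\nu_{2}=\rho^{-}+\mu'$ for some positive $\mu'$, and then applies this to the signed measures $\mu^{+}-\nu^{-}$ and $\nu^{+}-\mu^{-}$, setting $\mu^{++}=(\mu^{+}-\nu^{-})^{+}$, $\nu^{--}=(\mu^{+}-\nu^{-})^{-}$, $\nu^{++}=(\nu^{+}-\mu^{-})^{+}$, $\mu^{--}=(\nu^{+}-\mu^{-})^{-}$. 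You in fact mention this alternative at the end of your second paragraph, and the two constructions produce the \emph{same} six measures: your $\mu_{1}=\mu^{+}\wedge\nu^{-}$ has density $\min\{f_{+},g_{-}\}=f_{+}-\max\{f_{+}-g_{-},0\}$, which is exactly the density of the paper's $\mu^{+}-(\mu^{+}-\nu^{-})^{+}$. Your approach is more hands-on; the paper's avoids the auxiliary dominating measure. One remark: the paper's proof continues past the lemma statement to check that $\mu^{++}+\nu^{++}$ and $\mu^{--}+\nu^{--}$ are mutually singular (so that they give the Jordan decomposition of $\mu+\nu$), a fact used immediately afterward; your measures enjoy this too, but you have not verified it, and strictly speaking the lemma as stated does not ask for it.
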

\begin{proof}
	Suppose $\mu = \nu_1 - \nu_2$ with $\nu_1$ and $\nu_2$ positive measures. Let $\mu^+ $ and $\mu^-$ be the Jordan decomposition of $\mu$.
	Let $\mu' = \nu_1 - \mu^+$, then $\nu_1 = \mu^- + \mu'$. Indeed $\mu^+ - \mu ^- = \nu _1 - \nu _2$ which implies that $\mu^+ - \nu_1 = \mu^- - \nu_2$.
	Thus if $\nu_1, \nu_2$ is a decomposition of $\mu$, then $\nu_1 = \mu^+ + \mu '$ and $\nu_2 = \mu^- + \mu '$ for some positive measure $\mu'$.
	Now, consider $\mu = \mu ^+ - \mu ^-$ and $\nu = \nu ^+ - \nu ^-$. Since the pairs of positive measures $\mu^+, \nu^-$ and $(\mu^+ - \nu^-)^+$, $(\mu ^+ - \nu ^-)^-$ are both decompositions of $\mu^+ - \nu ^-$, by the above comments, we get that $\mu^+ =(\mu^+ - \nu^-)^+ + \mu_1$ and $\nu^- =(\mu^+ - \nu^-)^- + \mu_1$, for some positive measure $\mu_1$. Analogously, since the pairs of positive measures $\mu^-, \nu^+$ and $(\nu^+ - \mu^-)^+$, $(\nu ^+ - \mu ^-)^-$ are both decompositions of $\nu^+ - \mu ^-$, by the above comments, we get that $\nu^+ =(\nu^+ - \mu^-)^+ + \mu_2$ and $\mu^- =(\nu^+ - \mu^-)^- + \mu_2$, for some positive measure $\mu_2$.
	By Definition \ref{disj}, $\mu+$ and $\mu^-$ are disjoint, and so are $(\mu ^+ - \nu^-)^+$ and $(\nu ^+ - \mu^-)^-$. Analogously, $\nu+$ and $\nu^-$ are disjoint, and so are $(\mu ^+ - \nu^-)^-$ and $(\nu ^+ - \mu^-)^+$. Moreover, since $(\mu ^+ - \nu^-)^+$ and $(\mu ^+ - \nu^-)^-$ are disjoint, so are $(\nu ^+ - \mu^-)^+$ and $(\nu ^+ - \mu^-)^-$. This gives that, the pair $(\mu ^+ - \nu^-)^+ + (\nu ^+ - \mu^-)^+$, $(\nu ^+ - \mu^-)^- + (\mu ^+ - \nu^-)^-$ is a Jordan decomposition of $\mu + \nu$ and we are done.
\end{proof}

\begin{proposition}
	Let $\mu,\nu \in \mathcal{AB}$ be two signed measures. Then $(\mu + \nu)|_\gamma = \mu|_\gamma + \nu|_\gamma$ $m_1$-a.e. $\gamma \in N_1$.
\end{proposition}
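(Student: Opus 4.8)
The plan is to reduce the signed--measure identity to the additivity statement for \emph{positive} measures, Proposition \ref{hytre}, by passing through the common refinement of the Jordan decompositions of $\mu$ and $\nu$ furnished by Lemma \ref{fhdj}. The role of that lemma is exactly to replace the naive decomposition $\mu+\nu=(\mu^++\nu^+)-(\mu^-+\nu^-)$ (whose parts need not be mutually singular) by a decomposition into positive measures that really is the Jordan decomposition, so that the definition of the restriction of a signed measure can be applied directly.

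First I would record an elementary closure property that makes Proposition \ref{hytre} repeatedly applicable: if $\lambda$ is a positive measure with $\lambda\le\eta$ for some $\eta\in\mathcal{AB}$, then $\pi_1^\ast\lambda\le\pi_1^\ast\eta\ll m$, hence $\lambda\in\mathcal{AB}$. Consequently all the positive measures $\mu_1,\mu_2$ and $\mu^{++}=(\mu^+-\nu^-)^+$, $\nu^{--}=(\mu^+-\nu^-)^-$, $\nu^{++}=(\nu^+-\mu^-)^+$, $\mu^{--}=(\nu^+-\mu^-)^-$ produced by Lemma \ref{fhdj} belong to $\mathcal{AB}$, since each is dominated by one of $\mu^\pm,\nu^\pm$, which lie in $\mathcal{AB}$.

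Next, using the identities $\mu^+=\mu^{++}+\mu_1$, $\mu^-=\mu^{--}+\mu_2$, $\nu^+=\nu^{++}+\mu_2$, $\nu^-=\nu^{--}+\mu_1$ together with Proposition \ref{hytre}, I would write, for $m_1$-a.e. $\gamma\in N_1$, $\mu|_\gamma=\mu^+|_\gamma-\mu^-|_\gamma=\mu^{++}|_\gamma+\mu_1|_\gamma-\mu^{--}|_\gamma-\mu_2|_\gamma$ and likewise $\nu|_\gamma=\nu^{++}|_\gamma+\mu_2|_\gamma-\nu^{--}|_\gamma-\mu_1|_\gamma$. Adding these two identities, the terms $\mu_1|_\gamma$ and $\mu_2|_\gamma$ cancel, and one more application of Proposition \ref{hytre} yields $\mu|_\gamma+\nu|_\gamma=(\mu^{++}+\nu^{++})|_\gamma-(\mu^{--}+\nu^{--})|_\gamma$ for $m_1$-a.e. $\gamma$.

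Finally, the proof of Lemma \ref{fhdj} shows that $(\mu^{++}+\nu^{++})-(\mu^{--}+\nu^{--})$ is precisely the Jordan decomposition of $\mu+\nu$; hence, by the very definition of the restriction of a signed measure, $(\mu+\nu)|_\gamma=(\mu^{++}+\nu^{++})|_\gamma-(\mu^{--}+\nu^{--})|_\gamma$ for $m_1$-a.e. $\gamma$, and comparing with the previous display finishes the argument. The only points requiring care are the bookkeeping of which auxiliary measures belong to $\mathcal{AB}$ (handled by the domination remark above) and the fact that each invocation of Proposition \ref{hytre} and of Lemma \ref{fhdj} is valid only off an $m_1$-null set, so one intersects the finitely many full-measure sets involved; there is no genuine analytic difficulty once Lemma \ref{fhdj} is in hand, so the main obstacle was the combinatorial construction already carried out in that lemma.
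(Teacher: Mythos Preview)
Your proposal is correct and follows essentially the same route as the paper: invoke Lemma \ref{fhdj} to obtain the common refinement $\mu^+=\mu^{++}+\mu_1$, $\mu^-=\mu^{--}+\mu_2$, $\nu^+=\nu^{++}+\mu_2$, $\nu^-=\nu^{--}+\mu_1$ with $(\mu+\nu)^\pm=\mu^{\pm\pm}+\nu^{\pm\pm}$, then apply Proposition \ref{hytre} to split each positive sum and cancel the $\mu_1|_\gamma,\mu_2|_\gamma$ terms. Your extra remark that the auxiliary measures lie in $\mathcal{AB}$ by domination is a welcome clarification that the paper leaves implicit.
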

\begin{proof}
	
	Suppose that $\mu =\mu ^{+}-\mu ^{-}$ and $\nu =\nu ^{+}-\nu ^{-}$ are the Jordan decompositions of $\mu$ and $\nu$ respectively. By definition, $\mu |_{\gamma }=\mu ^{+}|_{\gamma }-\mu ^{-}|_{\gamma }$, $\nu |_{\gamma }=\nu ^{+}|_{\gamma }-\nu ^{-}|_{\gamma }$. 
	
	By Lemma \ref{fhdj}, suppose that $\mu ^{+}=\mu ^{++}+\mu_{1}$, $\mu ^{-}=\mu ^{--}+\mu_{2}$ and  $\nu^{+}=\nu ^{++}+\mu_{2}$, $\nu ^{-}=\nu ^{--}+\mu_{1}$. In a way that $(\mu +\nu )^{+}=\mu ^{++}+\nu ^{++}$ and $(\mu +\nu )^{-}=\mu ^{--}+\nu ^{--}$. By Proposition \ref{hytre}, it holds $\mu ^{+}|_{\gamma }=\mu ^{++}|_{\gamma }+\mu_{1}|_{\gamma }$, $\mu ^{-}|_{\gamma }=\mu ^{--}|_{\gamma }+\mu_{2}|_{\gamma }$, $\nu ^{+}|_{\gamma }=\nu ^{++}|_{\gamma }+\mu_{2}|_{\gamma }$ and $\nu ^{-}|_{\gamma }=\nu ^{--}|_{\gamma }+\mu_{1}|_{\gamma }$.
	
	Moreover,
	
	$(\mu +\nu )^{+}|_{\gamma }=\mu ^{++}|_{\gamma }+\nu ^{++}|_{\gamma }$
	
	$(\mu +\nu )^{-}|_{\gamma }=\mu ^{--}|_{\gamma }+\nu ^{--}|_{\gamma }$
	
	Putting all together, we get:
	
	\begin{eqnarray*}
		(\mu +\nu )|_{\gamma } &=&(\mu +\nu )^{+}|_{\gamma }-(\mu +\nu
		)^{-}|_{\gamma } \\
		&=&\mu ^{++}|_{\gamma }+\nu ^{++}|_{\gamma }-(\mu ^{--}|_{\gamma }+\nu
		^{--}|_{\gamma }) \\
		&=&\mu ^{++}|_{\gamma }+\mu_{1}|_{\gamma }+\nu ^{++}|_{\gamma }+\mu_{2}|_{\gamma
		}-(\mu ^{--}|_{\gamma }+\mu_{2}|_{\gamma }+\nu ^{--}|_{\gamma }+\mu_{1}|_{\gamma
		}) \\
		&=&\mu ^{+}|_{\gamma }-\mu ^{-}|_{\gamma }+\nu ^{+}|_{\gamma }-\nu
		^{-}|_{\gamma } \\
		&=&\mu |_{\gamma }+\nu |_{\gamma }.
	\end{eqnarray*}
\end{proof}We immediately arrive at the following
\begin{proposition}\label{lasttttt}
	Let $\mu \in \mathcal{AB}$ be a signed measure and $\mu = \mu^+ - \mu ^-$ its Jordan decomposition. If $\mu_1$ and $\mu_2$ are positive measures such that $\mu = \mu_1 - \mu _2$, then $ \mu|_\gamma =\mu_1|_\gamma - \mu_2|_\gamma$. It means that, the restriction does not depends on the decomposition of $\mu$.
\end{proposition}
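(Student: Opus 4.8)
The plan is to deduce this at once from the additivity of the restriction for positive measures (Proposition \ref{hytre}), after rewriting the hypothesis $\mu=\mu_{1}-\mu_{2}$ as an identity between \emph{sums} of positive measures. Throughout one tacitly assumes $\mu_{1},\mu_{2}\in\mathcal{AB}$, which is what makes $\mu_{1}|_{\gamma}$ and $\mu_{2}|_{\gamma}$ meaningful in the sense of Definition \ref{restrictionmeasure}; recall also that by the very definition of $\mathcal{AB}$ the positive measures $\mu^{+}$ and $\mu^{-}$ belong to $\mathcal{AB}$.

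First I would observe that from $\mu_{1}-\mu_{2}=\mu=\mu^{+}-\mu^{-}$ one obtains the equality of positive measures
\[
\mu_{1}+\mu^{-}=\mu_{2}+\mu^{+}.
\]
Since $\mu_{1},\mu_{2},\mu^{+},\mu^{-}\in\mathcal{AB}$ are positive, Proposition \ref{hytre} applies to each side and gives, for $m$-a.e.\ $\gamma$,
\[
(\mu_{1}+\mu^{-})|_{\gamma}=\mu_{1}|_{\gamma}+\mu^{-}|_{\gamma},
\qquad
(\mu_{2}+\mu^{+})|_{\gamma}=\mu_{2}|_{\gamma}+\mu^{+}|_{\gamma}.
\]
As the two left-hand sides are the restriction of one and the same measure, the two right-hand sides coincide $m$-a.e.; cancelling yields $\mu_{1}|_{\gamma}-\mu_{2}|_{\gamma}=\mu^{+}|_{\gamma}-\mu^{-}|_{\gamma}$ for $m$-a.e.\ $\gamma$. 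The right-hand side is, by definition, $\mu|_{\gamma}$, so $\mu|_{\gamma}=\mu_{1}|_{\gamma}-\mu_{2}|_{\gamma}$ $m$-a.e., which is the assertion; in particular the restriction is independent of the chosen positive decomposition.

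There is essentially no obstacle here: the only point requiring a little care is the bookkeeping of the various $m$-a.e.\ qualifiers, since each application of Proposition \ref{hytre} is valid only outside a $m$-null set and one must intersect finitely many full-measure sets; this is routine. One could equally well argue via the signed-measure additivity established just above, writing $\mu=\mu_{1}+(-\mu_{2})$ and noting that the restriction of the signed measure $-\mu_{2}$ (whose Jordan decomposition is $0-\mu_{2}$) is exactly $-\mu_{2}|_{\gamma}$; both routes are equivalent, and I would present whichever makes the dependence on earlier results most transparent.
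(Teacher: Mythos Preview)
Your argument is correct, and it is actually more direct than what the paper does. The paper first establishes additivity for \emph{signed} measures, $(\mu+\nu)|_\gamma=\mu|_\gamma+\nu|_\gamma$, via the technical decomposition Lemma \ref{fhdj} (writing $\mu^+=\mu^{++}+\mu_1$, etc., so that $(\mu+\nu)^+$ and $(\mu+\nu)^-$ can be expressed through pieces of the Jordan parts of $\mu$ and $\nu$), and only then reads off Proposition \ref{lasttttt} as an immediate corollary. You instead bypass Lemma \ref{fhdj} entirely: rewriting $\mu_1-\mu_2=\mu^+-\mu^-$ as $\mu_1+\mu^-=\mu_2+\mu^+$ and applying only the positive-measure additivity (Proposition \ref{hytre}) to each side is enough, since all four summands are finite positive measures in $\mathcal{AB}$ and the resulting restrictions on $K$ are finite, so the cancellation step is legitimate. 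Your alternative route through signed additivity, mentioned at the end, is precisely the paper's approach; what your main argument buys is that Proposition \ref{lasttttt} follows already from Proposition \ref{hytre} alone, without ever needing Lemma \ref{fhdj}.
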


\subsection*{Acknowledgment}
The authors would like to express their gratitude for the valuable comments provided by the reviewers, whose insightful suggestions and remarks significantly contributed to the improvement of the paper.
This work was partially supported by CNPq (Brazil) Grant Alagoas Din\^amica: 409198/2021-8 and 
\begin{enumerate}
	\item[(1)] FAPEAL (Alagoas-Brazil) Grant E:60030.0000002330/2022. 
	\item[(2)]FAPEAL (Alagoas-Brazil)Grant E:60030.0000000161/2022. 
\end{enumerate}



\end{document}